\newtheorem{theorem}             {Theorem}    [section]
\newtheorem{lemma}      [theorem]{Lemma}
\newtheorem{proposition}[theorem]{Proposition}
\newtheorem{corollary}  [theorem]{Corollary}
\theoremstyle{definition}
\newtheorem{example}   [theorem]{Example}
\newtheorem{definition}[theorem]{Definition}
\newtheorem{remark}    [theorem]{Remark}
\newtheorem{notation}  [theorem]{Notation}
\newcommand{\ds}{\displaystyle}
\newcommand{\ab}{\allowbreak}
\newcommand{\thebottomline}{\renewcommand{\thefootnote}{}
  \renewcommand{\footnoterule}{}
  \phantom{M}\footnotetext{\tiny{}\hfill
    \textit{\noindent\romannumeral\day.%
\romannumeral\month.\romannumeral\year}}}
\newcommand{\case}[1]{\par\smallskip\noindent%
\textit{Case $($#1$\kern0.1em)$\ }}
\let\oldtocsection=\tocsection
\let\oldtocsubsection=\tocsubsection
\let\oldtocsubsubsection=\tocsubsubsection
\renewcommand{\tocsection}[2]{\hspace{0em}\oldtocsection{#1}{#2}}
\renewcommand{\tocsubsection}[2]{\hspace{1em}\oldtocsubsection{#1}{#2}}
\renewcommand{\tocsubsubsection}[2]{\hspace{2em}\oldtocsubsubsection{#1}{#2}}
\newcommand{\cA}{\mathcal{A}}
\newcommand{\cP}{\mathcal{P}}
\newcommand{\cPS}{\mathcal{PS}}
\newcommand{\cU}{\mathcal{U}}
\newcommand{\cV}{\mathcal{V}}
\newcommand{\cW}{\mathcal{W}}
\newcommand{\cX}{\mathcal{X}}
\newcommand{\bC}{\mathbb{C}}
\newcommand{\bR}{\mathbb{R}}
\newcommand{\E}{\text{E}}
\newcommand{\tr}{\text{tr}}
\newcommand{\Tr}{\mathop{\text{Tr}}}
\newcommand{\cov}{\mathrm{cov}}
\let\phi=\varphi
\newcommand{\id}{\mathit{id}}
\newcommand{\Kr}{\mathit{Kr}}
\newcommand{\sep}{\textrm{\ sep.\ }}
\newcommand{\all}{\textit{\,all}}
\newcommand{\nO}{\textrm{\O}}
\newcommand{\NC}{\textrm{NC}}
\newcommand{\snckalt} [2]{S_{\NC}^{\textit{k-alt}}(#1, #2)}
\newcommand{\snckea}  [2]{S_{\NC}^{\textit{k-e-a}}(#1, #2)}
\newcommand{\sncjea}  [2]{S_{\NC}^{\textit{j-e-a}}(#1, #2)}
\newcommand{\sncaplus}[2]{S_{\NC}^{\textit{\all}+}(#1, #2)}
\long\def\@makefntext#1{\@setpar{\@@par\@tempdima
\hsize \advance\@tempdima-10pt\parshape \@ne
10pt\@tempdima}\par \parindent 1em\noindent \hbox to
\z@{\hss$\m@th^{\@thefnmark}$}#1} 
\newcounter{int}\setcounter{int}{1}
\newcommand{\listcomments}{%
\smallskip\hrule\smallskip\noindent 
\@whilenum\value{int}<\thejmpnumber\do
{\tiny Comm. \theint{} is on page \pageref{jmp\theint}\stepcounter{int}, }
\smallskip\hrule\smallskip}
\def\@captionfont{\small}
\newcounter{jmpnumber}\setcounter{jmpnumber}{1}
\renewcommand{\thefootnote}{(\arabic{footnote})}
\begin{document}
\title[second order even and $R$-diagonal elements]
       {Second Order Cumulants: second order even elements 
        and ${\bm R}$-diagonal elements}

\author[Arizmendi]{Octavio Arizmendi}
\address{Centro de Investigaci{\'o}n en Matem{\'a}ticas, Guanajuato, Mexico}
\email{octavius@cimat.mx}
\thanks{The first author was supported by CONACYT Grant 222668.}

\author[mingo]{James A. Mingo} \address{Department
  of Mathematics and Statistics, Queen's University, Jeffery
  Hall, Kingston, Ontario, K7L 3N6, Canada}

\email{mingo@mast.queensu.ca} 

\thanks{The second author was  supported by a Discovery Grant from
  the Natural Sciences and Engineering Research Council of
  Canada}

\begin{abstract}
We introduce $R$-diagonal and even operators of second
order. We give a formula for the second order free cumulants
of the square $x^2$ of a second order even element in terms
of the second order free cumulants of $x$. Similar formulas
are proved for the second order free cumulants of $aa^*$,
when $a$ is a second order $R$-diagonal operator.  We also
show that if $r$ is second order $R$-diagonal and $b$ is
second order free from $r$, then $rb$ is also second order
$R$-diagonal. We present a large number of examples, 
in particular, the limit distribution of products of 
Ginibre matrices. We prove the conjectured formula of 
Dartois and Forrester for the fluctuations moments of 
the product of two independent complex Wishart 
matrices and generalize it to any number of factors. 
\end{abstract}

\maketitle

%\tableofcontents
%%%%%%%%%%%%%%%%%%%%%%%%%%%%%%%%%%
%%%%%%%%%%%%%%%%%%%%%%%%%%%%%%%%%%
%%%%                          %%%%
%%%%    S E C T I O N   1     %%%%
%%%%                          %%%%
%%%%%%%%%%%%%%%%%%%%%%%%%%%%%%%%%%
%%%%%%%%%%%%%%%%%%%%%%%%%%%%%%%%%%

\section{Introduction}\label{sec:introduction}

Roughly forty years ago, Voiculescu devised a new kind of
independence, called \textit{free independence} \cite{vdn}, for
non-commuting random variables, which also gives a precise
meaning to being in `general position', but without the
assumption that the random variables commute. This has been
particularly fruitful in dealing with matrix-valued random
variables. 

The context for free probability theory is a non-commutative
probability space $(\cA, \phi)$. Here $\cA$ is a unital
algebra over $\bC$, and $\phi: \cA \rightarrow \bC$ is a
linear functional with $\phi(1) = 1$.  Elements of $\cA$ are
our random variables and $\phi$ is our expectation. If $a_1,
\dots, a_s \in \cA$ then $\{\phi(a_{i_1} \cdots a_{i_n}) \mid i_1,
\dots, i_n \in \{1, \dots, s\}\}$ are the mixed moments of
$\{a_1, \dots, a_s\}$. If $a_1, \dots,\ab a_s$ are freely
independent, then the mixed moments of $\{ a_1, \dots,\ab a_s\}$
are determined, according to a \textit{universal rule}, by
the individual moments of each $a_k$, $1 \leq k \leq n$. 

In this article, we shall further assume that $\cA$ is an
involutive algebra, i.e. there is a conjugate linear map $a
\mapsto a^*$ such that $(ab)^* = b^* a^*$. Furthermore, we
shall assume that $\phi(a^*a) \geq 0$ for $a \in \cA$ and
that $\phi$ is a trace, although many results remain valid
in greater generality. Such a pair $(\cA, \phi)$ is
frequently called a \textit{non-commutative $*$-probability
  space}. If $\cA$ is a C$^*$-algebra and $a \in \cA$ is
self-adjoint then $a$ has a spectral measure, $\mu_a$,
(relative to $\phi$) given by $\phi(p(a)) = \int_\bR p(t) \,
d \mu_a(t)$ for polynomials $p$. If $\cA$ is a C$^*$-algebra
and $a_1$ and $a_2$ are freely independent and self-adjoint
then the universal rule determines the spectral measure of
$a_1 + a_2$ from that of $a_1$ and $a_2$ (see \cite[Lecture
  12]{ns2}).

The importance of free probability theory in random matrix
theory is that it allows to understand the eigenvalue
distribution of many random matrix ensembles that are
constructed from others.

By a random matrix ensemble we mean a sequence of random
matrices $\{ X_N \}_N$ where $X_N$ is a $N \times N$ random
matrix. The eigenvalues of $X_N$ are random. If $X_N =
X_N^*$, the eigenvalues are real and random. If $X_N =
X_N^*$ and for all $z$ in the complex upper half plane
$\bC^+$, the limit $\lim_N \E( \frac{1}{N} \Tr((z -
X_N)^{-1}))$ exists, we say that the ensemble has a
\textit{limit eigenvalue distribution}.

To be concrete, if $X_N$ and $Y_N$ both have limit
eigenvalue distributions, we may ask about the limit
eigenvalue distribution $X_N + Y_N$. It turns out that
Voiculescu's free independence rule allows us to calculate
the limit eigenvalue distribution of $X_N+Y_N$. There are
many results, starting with \cite{v}, giving sufficient
conditions for two ensembles to be asymptotically free (see
\cite[Chapter 4]{ms2}).

One way to understand such universal rule is via the free
cumulants.  In classical probability, the characteristic
function of a measure and its logarithm play a prominent
role. In free probability, the logarithm of the
characteristic function gets replaced by Voiculescu's
$R$-transform given by $$R(z) = G^{\langle -1 \rangle}(z) -
z^{-1}$$ where $G(z) = \E((z - X)^{-1})$ is the
\textit{Cauchy transform of} $X$, $z$ is in the complex
upper half plane $\bC^+$ and the inverse
$G^{\langle-1\rangle}$ is defined on a suitable domain in
$\bC$. If $R$ has a power series expansion $\sum_{n \geq 1}
\kappa_n z^{n-1}$, which is always the case with a bounded
random variable, then the coefficients, $\{ \kappa_n \}_{n
  \geq 1}$, of $R$ are called the \textit{free cumulants} of
$X$.

After one understands the limit eigenvalue distribution of
an ensemble $\{X_N\}_N$, the next object of study is the
covariance of traces of resolvents
\[
\lim_N \cov(\Tr((z - X_N)^{-1}), \Tr((w - X_N)^{-1})),
\]
which we call the \textit{second order Cauchy transform} of the
limit distribution. In this paper we shall be concerned with
the combinatorial properties of this function; see \cite{dm}
for the properties of this analytic function.

The limiting covariance of the random variables $\Tr((z -
X_N)^{-1})$ and $\Tr((w - X_N)^{-1})$ is referred to in the
statistical literature as the linear spectral statistics of
the ensemble, see \cite{bs,f}.

In \cite{ms}, an extension of free independence was
initiated, called second order freeness. The purpose was to
find the analogue of universal rule above for linear
spectral statistics. In \cite{cmss}, the second order
analogue of the $R$-transform and second order cumulants
were presented. It is these second order cumulants that are
the subject of this article. 

In particular, we wish to apply them to the study of
$R$-diagonal operators and even operators. $R$-diagonal
operators are certain non-normal operators in a
$*$-probability space. It was the study of these operators
that led to the solution of the invariant subspace problem
for operators in II$_1$ factors \cite{hs}.  In random matrix
theory, they appear in the famous single ring theorem
\cite{gkz} which is a generalization of the circular
law for Ginibre matrices. This distribution is the limiting
distribution of random matrix ensembles of the form
$U_nT_nV_n$, where $T_n$ is positive, $U_n$ and $V_n$ are
Haar unitaries with independent entries.

Even operators are self-adjoint operators for which all odd
moments are zero.  There is a close relation between even
and $R$-diagonal operators.  If $a$ is $R$-diagonal, then its
distribution can be recovered from its Hermitization
$\begin{pmatrix}0&a\\ a^*&0\end{pmatrix}$, which is an even
  operator.

 The contribution of this article is to extend
  the idea of $R$-diagonality and evenness to the case of
  second order freeness. In particular, for a second order
  $R$-diagonal operator we have, letting,
  $$\beta^{(a)}_n:=\kappa_{2n}(a, a^*, \dots,\ab a,\ab a^*)$$ and
\begin{equation*}
\beta_{p,q}^{(a)}:=\kappa_{2p,2q}(a,a^*,\dots,a, a^*)
 \end{equation*}
be the determining series of $a$, we have that
\begin{equation} \label{cumulants of aa}
\kappa_{p,q}(aa^*, \dots, aa^*) =
\sum_{(\cV, \pi) \in \cPS_{NC}(p,q)} \beta_{(\cV, \pi)}
\end{equation}
where $\cPS_{NC}(p,q)$ is the set of non-crossing
partitioned permutations on a $(p, q)$-annulus introduced in
Section \ref{section:partitioned_permutations}.

There is a similar construction of the determining series of
a second order even operator so that the same result holds
(see equation (\ref{determining 1})). 

In recent work of Borot, Charbonnier, Garcia-Failde, Leid,
and Shad\-rin, \cite{bcgls}, the theory of higher order
freeness has been connected to topological recursion. In
particular, a third order version of equation
(\ref{eq:second_order_r_transform}) is now known. As it
only involves products of the terms in equation
(\ref{eq:second_order_r_transform}), we expect that much
of our formalism extends to higher orders. 

Formula \eqref{cumulants of aa} and its counterpart for even
elements imply a functional equation between the series of
$*$-cumulants of $a$ and the cumulant of $aa^*$, and thus is
useful to relate their fluctuation moments. As an
application of our main results, at the end of the paper, in
Section \ref{sec:examples}, we work out the $*$-cumulants of
a number of examples. These include second order Haar
unitaries and products of semicircular and circular
operators. In particular, we highlight three important
applications to random matrix ensembles.

\begin{enumerate}

\item
We prove the conjectured formula of Dartois and Forrester
\cite{df} for the fluctuations moments of the product of two
independent complex Wishart matrices and generalize it to
any number of factors. See Remark
\ref{remark:connection_to_dartois_forrester}.

\item
We obtain the second order $*$-cumulants and second order
Cauchy transform of Wishart ensembles with a given
covariance: i.e. $WAW^*$ where $W$ is a Ginibre Matrix and
$A$ is a deterministic matrix.

\item
We extend the calculation of Dubach and Peled \cite{dp} of
some fluctuation moments of products of Ginibre matrices, to
the case of general $*$-moments.
\end{enumerate} 

\subsection*{Outline of the paper} In Section
\ref{sec:second_order_cumulants}, we will review the basic
notations of second order freeness and the definition and
properties of even and $R$-diagonal operators of first
order.  In Section \ref{sec:statement_of_results}, we will
present the definitions and statements of our main results.
In Section \ref{sec:annular_non-crossing_partitions}, we will
recall the results about non-crossing annular permutations
we need and prove the main technical results needed in
Sections \ref{sec:second_order_r-diagonal} and
\ref{sec:even_elements}. In Section
\ref{sec:second_order_r-diagonal}, we will prove Theorem
\ref{MT2}, our main result on the determining series of
second order $R$-diagonal operators. In Section
\ref{sec:even_elements}, we will prove Theorem \ref{MT3}, our
main result on the determining series of second order even
operators. In Section
\ref{sec:products_free_random_variables}, we extend a result
of Arizmendi and Vargas on cumulants of products to the
second order case.  In Section \ref{sec:examples}, we present
many examples of computations of second order $*$-cumulants
using Theorems \ref{MT2} and \ref{MT3}.

As we shall see in Section \ref{sec:examples}, some
properties of first order freeness have a simple extension
to the second order case while others do not. For example,
contrary to the first order case, the powers of a second
order $R$-diagonal operator are not necessarily second order
$R$-diagonal operators (see Example \ref{exa:haar2}).  Also,
creating freeness of second order  by conjugating by
second order free unitary is more delicate. Recall from
\cite{ns1} that for $a$ and $b$ (first order) free elements
such that $b$ is $R$-diagonal (or even) we also have that
$bab^*$ is free from $a$. Example \ref{exa:uau2} shows that
this is not true at the second order level.  Another
interesting example that shows that second order is
more delicate is the relation between the square $s^2$ of a
semicircular $s$ and the square $cc^*$ of a circular
element. As shown in \cite{ns2}, they both correspond to a
free Poisson or Marchenko-Pastur distribution. Interestingly, $s^2$ and
$cc^*$ have different fluctuation moments. This is shown in
Examples \ref{semicircle} and \ref{examp:square_circular}.

%%%%%%%%%%%%%%%%%%%%%%%%%%%%%%%%%%
%%%%%%%%%%%%%%%%%%%%%%%%%%%%%%%%%%
%%%%                          %%%%
%%%%    S E C T I O N   2     %%%%
%%%%                          %%%%
%%%%%%%%%%%%%%%%%%%%%%%%%%%%%%%%%%
%%%%%%%%%%%%%%%%%%%%%%%%%%%%%%%%%%

\section{Second order cumulants and second order freeness}
\label{sec:second_order_cumulants}

\subsection{Non-crossing partitions and free cumulants}
\label{subsec:preliminaries_statements}

Before reviewing second order cumulants let us recall some
of the basics of free probability which can be found in
\cite{ns2}. We are going through the details here because
the construction for the second order case (not in
\cite{ns2}) follows a similar pattern. $R$-diagonal
operators are defined in terms of free cumulants, so let us
begin with these. We let $[n] = \{1, 2, 3, \dots, n\}$ and
$\cP(n)$ be the partitions of $[n]$. Recall that $\pi =
\{V_1, \dots, V_k\}$ is a partition of $[n]$ if $V_1 \cup \cdots
\cup V_k = [n]$ and $V_i \cap V_j = \emptyset$. The subsets $V_1,
\dots, V_k$ are the \textit{blocks} of $\pi$. For $\pi,
\sigma \in \cP(n)$, we write $\pi \leq \sigma$ if each block
of $\pi$ is contained in some block of $\sigma$. This
defines a partial order on $\cP(n)$ and with this partial
order $\cP(n)$ becomes a lattice.

A partition $\pi$ of $[n]$ has a \textit{crossing} if we can
find two blocks $V_r$ and $V_s$ (assuming $r \not = s$) and
$i < j < k < l$ with $i, k \in V_r$ and $j, k \in V_s$. A
partition is \textit{non-crossing} if it has no
crossings. Let denote the non-crossing partitions of $[n]$
by $\NC(n)$. We shall regard each partition $\pi$ on $[n]$
as a permutation of $[n]$ as follows. The cycles of the
permutation $\pi$ are the blocks of the partition $\pi$ with
the elements of each block written in increasing
order. Using the metric condition given below, 
Biane \cite{b} identified those permutations  which come
from a non-crossing partition.

For $\pi$ a partition or permutation, let $\#(\pi)$ denote
the number of blocks or cycles accordingly. If $\pi$ is a
partition or permutation of $[n]$, we let $|\pi| = n -
\#(\pi)$. For any permutations $\pi$ and $\sigma$, we always
have the triangle inequality $|\pi \sigma| \leq |\pi| +
|\sigma|$. We let $\gamma_n$ be the permutation of $[n]$
with the single cycle $(1, 2, \dots, n)$. We can now state
Biane's characterization as follows. A permutation $\pi$ 
produces a  non-crossing partition if and
only if $|\gamma_n| = |\pi| + |\pi^{-1}\gamma_n|$; i.e. if
the triangle inequality becomes an equality. Note that we
simultaneously think of $\pi$ as a permutation and as a
partition; we shall do this frequently in this paper.

Now suppose that $(\cA, \phi)$ is a non-commutative
probability space and for each $n$ we have $\kappa_n:
\cA^{\otimes n} \rightarrow \bC$ an $n$-linear
functional. Given a partition $\pi$ of $[n]$ we can define an
$n$-linear map $\kappa_\pi: \cA^{\otimes n} \rightarrow \bC$
by the following rule. For $a_1, \dots, a_n \in \cA$
\[
\kappa_\pi(a_1, \dots, a_n) 
=
\mathop{\prod_{V \in \pi}}_{V = (i_1, \dots, i_l)}
\kappa_l(a_{i_1}, \dots, a_{i_l}).
\]
We are assuming here that $i_1 < i_2 < \cdots < i_l$. 

This construction can be then used to define the sequence
$\{\kappa_n\}_n$ through the moment-cumulant formula:
\begin{equation}\label{eq:moment-cumulant}
\phi(a_1\cdots a_n) = \sum_{\pi \in \NC(n)}
\kappa_\pi(a_1, \dots, a_n).
\end{equation}
This defines the free cumulants inductively and recursively
(see \cite[Lecture 11]{ns1}). Let us recall the definition
of $R$-diagonal elements from \cite[Lecture 15]{ns1}.
Recall that we are assuming that $\cA$ is a $*$-algebra.
For $\epsilon \in \{-1, 1\}$, we let $a^{(\epsilon)} = a$ if
$\epsilon = 1$ and $a^{(\epsilon)} = a^*$ if $\epsilon =
-1$.

\begin{definition}\label{def:r-diagonal}
Let $a \in \cA$. We say that $a$ is
$R$-\textit{diagonal} if for every $n$ and every
$\epsilon_1, \dots, \epsilon_n \in \{-1, 1\}$ we have that
\[
\kappa_n(a^{(\epsilon_1)}, a^{(\epsilon_2)}, \dots,
a^{(\epsilon_n)}) = 0
\]
whenever either $n$ is odd or there is $1 \leq i < n$ with
$\epsilon_i = \epsilon_{i+1}$. We can express this by saying
that all $*$-cumulants are zero except possibly
$\kappa_{2l}(a, a^*, \dots, a, a^*)$ and $\kappa_{2l}(a^*,
a, \dots, a^*, a)$, i.e. we get 0 unless the `$a$'s
alternate with the `$a^*$'s.
\end{definition}

\begin{definition}\label{def:even}
Let $a \in \cA$. We say that a self-adjoint element $a$ is
\textit{even} if all its odd free cumulants vanish,
i.e. $\kappa_{2l-1}(a, \dots , a) = 0$ for $l \geq 1$. By
the moment-cumulant formula, this is equivalent to having all
odd moments vanish.
\end{definition}

\subsection{Non-crossing annular permutations}
\label{subsection:non-crossing_annular_permutations}

The context for second order freeness is a second order
probability space $(\cA, \phi, \phi_2)$. Here $\cA$ is a
unital algebra over $\bC$ and $\phi: \cA \rightarrow \bC$ is
a linear functional with $\phi(1) = 1$ as before. We assume
in addition that $\phi$ is \textit{tracial}, i.e.  $\phi(ab)
= \phi(ba)$. We assume that $\phi_2: \cA \times \cA
\rightarrow \bC$ is  bilinear, symmetric, tracial in each
variable, and is such that $\phi_2(1, a) = \phi_2(a, 1) = 0$
for all $a \in \cA$. In keeping with our previous assumption,
we continue to assume that $\cA$ is a $*$-algebra. For an
element $a \in \cA$, we call the double indexed sequence $\{
\phi_2(a^m, a^n)\}_{m,n}$ the \textit{fluctuation moments}
of $a$. As mentioned above, the point of second order freeness
is that it gives a universal rule for computing fluctuation
moments of sums and products of random variables given their
individual moments and fluctuation moments. When variables
satisfy these universal rules, we say that the elements are
second order free.

\begin{figure}
  \begin{minipage}[c]{0.25\textwidth}
    \includegraphics{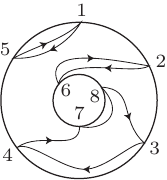}
  \end{minipage}\hfill
  \begin{minipage}[c]{0.75\textwidth}
    \caption{\small A non-crossing annular permutation of the $(5,
  3)$-annulus. From a diagrammatic point of view, we go
  around the cycles in clockwise order with the interior of
  the cycle lying between the two circles in such a way that
  the cycles do not
  intersect.\label{fig:example_non-crossing_annular}}
  \end{minipage}
\end{figure}

\setbox1=\hbox{$\ds\phi_2(a_1 \cdots a_m, b_1 \cdots b_n)
=
\delta_{m,n} \sum_{p = 1}^m
\prod_{i=1}^m \phi(a_i b_{n + p - i})$,}

Recall from \cite{ms} that unital subalgebras $\cA_1, \dots, \cA_s$ of $\cA$, where $(\cA, \phi, \phi_2)$ is a second order probability space, are \textit{second order free} if 
they are free with respect to $\phi$ and  for all $m, n \geq 1$ and all $a_1, \dots, a_m,\ab b_1, \dots, b_n \in \cA$ such that 
\begin{itemize}

\item[$(i)$]
$\phi(a_1) = \cdots = \phi(a_m) = \phi(b_1) = \cdots \phi(n_n) = 0$,

\item[$(ii)$]
$a_i \in \cA_{k_i}$ with $k_1 \not = k_2 \not = \cdots = k_m \not = k_1$,

\item[$(iii)$]
$b_j \in \cA_{l_i}$ with $l_1 \not = l_2 \not = \cdots = l_n \not = l_1$,
\end{itemize}
then for $m, n \geq 2$, we have 
\[
\phi_2(a_1 \cdots a_m, b_1 \cdots b_n)
=
\delta_{m,n} \sum_{p = 1}^m
\prod_{i=1}^m \phi(a_i b_{n + p - i})
\]
where all indices are interpreted modulo $m$.

Here we shall present the cumulant approach from \cite[Chapter 7]{cmss} to second order
freeness and for this we must start with an explanation of
\textit{non-crossing annular permutations} \cite{mn} and 
\cite[Chapter 5]{ms2}.

Let $m, n \geq 1$ be integers. As before for $\pi \in
S_{m+n}$ (the symmetric group on $[m+n]$), we let $\#(\pi)$
be the number of cycles in the cycle decomposition of $\pi$
and $|\pi| = m + n - \#(\pi)$. The following permutation
will play a special role. Let $\gamma_{m,n}$ be the
permutation with the two cycles $(1, \dots, m)(m +1, \dots,
m+n)$. A subset of $[p+q]$ of the form $\{r,
\gamma_{p,q}(r), \gamma^2_{p,q}(r), \dots,
\gamma_{p,q}^{k-1}(r)\}$ is a \textit{cyclic interval}.

As before, we
always have by the triangle inequality that $|\gamma_{m,n}|
\leq |\pi| + |\pi^{-1}\gamma_{m,n}|$. If we have equality,
then $\pi$ can be written as $\pi_1 \times \pi_2$ with
$\pi_1$ a non-crossing partition of $[m]$ and $\pi_2$ a
non-crossing partition of $[m+1, m+n] = \{m+1, \dots,
m+n\}$. This is Biane's characterization. In particular, the
equality means that no cycle of $\pi$ connects the two
cycles of $\gamma_{m,n}$.

If $\pi$ does have a cycle that connects the two cycles of
$\gamma_{m,n}$ then we call $\pi$ a \textit{non-crossing
  annular permutation} if $|\pi| + |\pi^{-1}\gamma_{m,n}| =
|\gamma_{m,n}| + 2$. It was shown in \cite{mn} that this is
equivalent to being able to draw the cycles of $\pi$ in an
annulus without the blocks crossing, see Figure
\ref{fig:example_non-crossing_annular}. We denote the
non-crossing annular permutations by $S_{NC}(m,n)$.

\subsection{Partitioned permutations}
\label{section:partitioned_permutations}
To introduce second order cumulants, we will use a
moment-cumulant formula like the one in Equation
(\ref{eq:moment-cumulant}), except we shall use the partitioned
permutations introduced in \cite{mss}.

\begin{definition}\label{def:partitioned-permutation}
Let $\pi \in S_n$ be a permutation and $\cU \in \cP(n)$ a
partition of $[n]$. We write $\pi \leq \cU$ to mean that
each cycle of $\pi$ is contained in some block of $\cU$. We
say the pair $(\cU, \pi)$ is a \textit{partitioned
  permutation} if $\pi \leq \cU$. We set $|\cU| = n -
\#(\cU)$ where as usual $\#(\cU)$ is the number of blocks of
$\cU$. We let $\cPS(n)$ be the set of partitioned
permutations of $[n]$.
\end{definition} 

When considering partitioned permutations it is sometimes
convenient to have a notation to distinguish between a
permutation $\pi$ and the partition coming from its cycle
decomposition. In these situations we shall write $0_\pi$ to
denote this partition.

Recall that $\cP(n)$ is a lattice under the order $\cU \leq
\cV$ if every block of $\cU$ is contained in some block of
$\cV$. Note that if $\cU$ and $\cV$ are partitions of $[n]$
then we have the triangle inequality $|\cU \vee \cV| \leq
|\cU| + |\cV|$. We can define a binary operation on
$\cPS(n)$ by setting $(\cU, \pi)(\cV, \sigma) = (\cU \vee
\cV, \pi\sigma)$. For this to make sense we need to make the
easy observation that $\pi\sigma \leq \cU \vee \cV$. More
importantly we can put a length function on $\cPS(n)$ by
setting $|(\cU, \pi)| = 2|\cU| - |\pi|$. This length
function satisfies the triangle inequality
\begin{equation}\label{eq:triangle_inequality}
|(\cU \vee \cV, \pi\sigma)| \leq |(\cU, \pi)| + |(\cV, \sigma)|,
\end{equation}
see \cite[\S2.3]{mss}.

Given a partitioned permutation $(\cU, \pi)$ we shall call
the partitioned permutations $(\cV, \sigma)$ and $(\cW,
\tau)$ an \textit{exact factorization} of $(\cU, \pi)$ if we
have $(\cV, \sigma)(\cW, \tau) = (\cU, \pi)$ (i.e. $\tau =
\sigma^{-1}\pi$ and $\cU = \cV \vee \cW$) and $|(\cU, \pi)|
= |(\cV, \sigma)| + |(\cW, \tau)|$, i.e. the inequality in 
(\ref{eq:triangle_inequality}) becomes an equality.

In \cite[Prop.~5.11]{cmss} all the exact
factorizations of $(1_n, \gamma_n)$ and $(1_{m+n},\ab
\gamma_{m,n})$ were found. Here $1_n$ denotes the partition
on $[n]$ with 1 block. All factorizations of $(1_n,
\gamma_n)$ are of the form $(0_\pi, \pi)
(0_{\pi^{-1}\gamma_n},\ab \pi^{-1}\gamma_n)$ with $\pi \in
NC(n)$ and $\pi^{-1}\gamma_n$ the Kreweras complement of
$\pi$.

Three possible factorizations of $(1_{m+n}, \gamma_{m,n})$
were found.  First if $\pi \in S_{NC}(m,n)$ and then
\[
(0_\pi, \pi)(0_{\pi^{-1}\gamma_{m,n}}, \pi^{-1}\gamma_{m,n})
= (1_{m+n}, \gamma_{m,n}) \textrm{\ and }
\]
\[
|(0_\pi, \pi)| + |(0_{\pi^{-1}\gamma_{m,n}},
\pi^{-1}\gamma_{m,n})| = |(1_{m+n}, \gamma_{m,n})|.
\]
We let $\cPS_{\NC}(m,n)'$ to be the set of $(\cU,
\pi)$'s where $\pi = \pi_1 \times \pi_2 \in NC(m) \times
NC([m+1, m+n])$ and $\cU \in \cP(m+n)$ is obtained from
$\pi$ as follows: one block of $\cU$ is the union of a 
cycle of $\pi_1$ and a cycle of $\pi_2$; all other blocks 
of $\cU$ are cycles of $\pi$. For such a $(\cU, \pi)$
we have
\[
(\cU, \pi)(0_{\pi^{-1}\gamma_{m,n}}, \pi^{-1}\gamma_{m,n}) =
(1_{m+n}, \gamma_{m,n}) \textrm{\ and }
\]
\[
|(\cU, \pi)| + |(0_{\pi^{-1}\gamma_{m,n}},
\pi^{-1}\gamma_{m,n})| = |(1_{m+n}, \gamma_{m,n})|.
\]
Finally if $\cV$ is obtained by joining a
cycle of $\pi_1^{-1}\gamma_{m}$ with a cycle of
$\pi_2^{-1}\gamma_n$ and then having all other blocks
of $\cV$ cycles of $\pi^{-1}\gamma_{m,n}$, then
\[
(0_\pi, \pi)(\cV, \pi^{-1}\gamma_{m,n}) = (1_{m+n},
\gamma_{m,n}) \textrm{\ and }
\]
\[
|(0_\pi, \pi)| + |(\cV, \pi^{-1}\gamma_{m,n})| = |(1_{m+n},
\gamma_{m,n})|.
\] 
We write $\cPS_{\NC}(m,n)$ to denote the union $S_{\NC}(m,n)
\cup \cPS_{\NC}(m,n)'$.

\subsection{Second order cumulants}\label{subsec:second_order_cumulants}
Let $(\cA, \phi, \phi_2)$ be a second order probability
space.  We can now describe the second order cumulants in
terms of the moment-cumulant formula. Recall that the first
order cumulants are given by
the moment-cumulant formula, see \cite[Prop.~11.4]{ns2},
\[
\phi(a_1 \cdots a_n) =
\sum_{\pi \in NC(n)} \kappa_\pi(a_1, \dots, a_n).
\]
We use the same method to construct the second order
cumulants:

\begin{eqnarray*}\lefteqn{%
\phi_2(a_1 \cdots a_m, a_{m+1} \cdots a_{m+n})} \\
& = &
\sum_{\pi \in S_{\NC}(m,n)} \kappa_\pi(a_1, \dots, a_{m+n})
+ \kern-1em
\sum_{(\cU, \pi) \in \cPS_{\NC}(m, n)'}
\kappa_{(\cU, \pi)}(a_1, \cdots , a_{m+n}).
\end{eqnarray*}
The $\kappa_\pi$'s in the first term are first order
cumulants, summed over $S_{NC}(m,n)$. The $\kappa_{(\cU,
  \pi)}$'s in the second term are second order
cumulants. Recall that first order cumulants are defined as
multiplicative functions (see \cite[p.~164]{ns2}); we shall
do the same for the second order cumulants. Indeed, if $\pi
= (1, 2, 4)(3)(5, 6)(7) \in NC(4) \times NC(3)$ and $\cU =
\{(1,2,4),(3, 5,6),(7)\}\ab \in \cP(7)$, then
\[
\kappa_{(\cU, \pi)}(a_1, a_2, a_3, a_4, a_5, a_6, a_7)
= 
\kappa_3(a_1, a_2, a_4) \kappa_{1,2}(a_3, a_5, a_6) 
\kappa_1(a_7). 
\]
In \cite[Def.~5.31]{ms2} there is a full explanation, note
that  a simplification of notation is used there in that
$(\cU, \pi)$ is denoted by $\sigma$ and is called a
\textit{marked permutation} in that the  cycles of
$\pi_1$ and $\pi_2$ joined by $\cU$ are called
\textit{marked}. In our example the marked cycles are $(3)$
and $(5,6)$.

If $a$ is an element of a second order non-commutative
probability space $(\cA, \phi, \phi_2)$, we call the set $\{
\phi(a^n)\}_{n \geq 1}$ the \textit{moment sequence} of $a$ and the
set $\{\phi_2(a^m,a^n)\}_{m,n \geq 1}$ the
\textit{fluctuation moment sequence} of $a$.

The moment-cumulant formulas may be stated in terms of the
formal power series, (see \cite[Theorem 39]{ms2}). Let us
define the cumulant series
\[
R(z)=\frac{1}{z}\sum_{n\geq1}\kappa^a_n z^{n}, \qquad
R(z,w)=\frac{1}{zw}\sum_{p,q\geq1}\kappa^a_{p,q} z^{p}w^{q}
\]
where $\kappa^a_n=\kappa_n(a,\dots,a)$ and
$\kappa^a_{p,q}=\kappa_{p,q}(a,\dots,a)$.  Let us also
denote the moment series,
\[
G(z)=\frac{1}{z}\sum_{n\geq0}\phi(a^n) z^{-n}, \qquad
G(z,w)=\frac{1}{zw}\sum_{n,m\geq1}\phi(a^p,a^q)
z^{-p}w^{-q}.
\]
Then we have the relations
\[
\frac{1}{G(z)}+ R(G(z))=z
\]
and 
\begin{multline}\label{eq:second_order_r_transform}
G(z,w)=G'(z)G'(w)R(G(z),G(w))\\+ \frac{\partial^2}{\partial
  z\partial w} \log \left(\frac{G(z)-G(w)}{z-w}\right).
\end{multline}
Note that $\frac{\partial^2}{\partial z\partial w} \log
\big(\frac{G(z)-G(w)}{z-w}\big) = \frac{G'(z) G'(w)} {(G(z)
  - G(w))^2} - \frac{1}{(z - w)^2}$, and hence we can write
(\ref{eq:second_order_r_transform}) without the
logarithm. In addition, by letting $\zeta = G(z)$ and
$\omega = G(w)$ we can write
(\ref{eq:second_order_r_transform}) as a transformation of
differential forms\footnote{We are grateful for Ga\"etan
Borot for bringing this to our attention.}
\[
[(z - w)^{-2} + G(z, w)]\, dzdw = [(\zeta - \omega)^{-2}
+ R(\zeta, \omega)]\, d\zeta d\omega.
\]
In addition, if $\widetilde{G} = \phi \circ G$ with $\phi$ a
M\"obius function (fractional linear transformation) then
$\frac{G'(z) G'(w)} {(G(z) - G(w))^2} =
\frac{\widetilde{G}'(z) \widetilde{G}'(w)}
     {(\widetilde{G}(z) - \widetilde{G}(w))^2} $. So if we
     let $F = 1/G$ then we can expand $F(z)$ as a formal
     power series in $\frac{1}{z}$ starting with $\phi(a)$
     (see \cite[Lemma 3.20]{ms2}) and then we can expand
     $\frac{F(z) - F(w)}{z - w}$ as formal power series
     starting with $1$ and hence $\log \frac{F(z) - F(w)}{z
       - w}$ also makes sense as a formal power series. As a
     function of two variables, its significance is that the
     analyticity of $\log \frac{F(z) - F(w)}{z - w}$ is
     equivalent to the univalence of $F$. See \S1 of
     \cite{hws} for further discussion.

\subsection{Second order freeness}
Recall from the introduction that second order freeness
gives a universal rule for computing the moments and
fluctuation moments of $a_1 + a_2$ provided one knows the
moments and fluctuation moments of $a_1$ and $a_2$
separately and $a_1$ and $a_2$ are second order free. The
rule is given in \cite[Def.~5.24]{ms2}. For deciding when
two or more elements are second order free it is very
convenient to use the rule of vanishing of mixed cumulants,
\cite[Thm.~5.34]{ms2}. Given subalgebras $\cA_1, \dots,
\cA_s$ of a second order non-commutative probability space
$(\cA, \phi, \phi_2)$ and elements $a_1, \dots, a_n$ with
$a_i \in \cA_{j_i}$, we say that the cumulant $\kappa_n(a_1,
\dots, a_n)$ is \textit{mixed} if there are $i_1$ and $i_2$
in $[n]$ such that $j_{i_1} \not= j_{i_2}$; the same applies for
second order cumulants $\kappa_{m,n}(a_1, \dots,
a_{m+n})$. We say that \textit{mixed cumulants vanish} if
$\kappa_n(a_1, \dots, a_n) = 0$ and $\kappa_{m,n}(a_1,\ab
\dots, \ab a_{m+n})\ab = 0$ whenever the cumulant is
mixed. The formulation of second order freeness we shall use
is that subalgebras $\cA_1, \dots, \cA_s$ are second order
free if all mixed cumulants vanish.

\subsection{Cumulants with products as entries}

A crucial tool in this paper will be the formula for writing
a cumulant with products as entries as a sum of cumulants with
all entries single operators \cite{ks}. For example for first order
cumulants we have
\begin{eqnarray*}\lefteqn{%
\kappa_2(a_1a_2, a_3a_4) =
\kappa_4(a_1, a_2, a_3, a_4) +
\kappa_1(a_1) \kappa_3(a_2, a_3, a_4)} \\
&+& 
\kappa_3(a_1, a_3, a_4) \kappa_1(a_2) +
\kappa_3(a_1, a_2, a_4) \kappa_1(a_3) +
\kappa_3(a_1, a_2, a_3) \kappa_1(a_4) \\
&+&
\kappa_2(a_1, a_4) \kappa_2(a_2, a_3) +
\kappa_2(a_1, a_4) \kappa_1(a_2) \kappa_1(a_3) \\
&+&
\kappa_1(a_1) \kappa_2(a_2, a_3) \kappa_1(a_4) +
\kappa_2(a_1, a_3) \kappa_1(a_2) \kappa_1(a_4) \\
&+&
\kappa_1(a_1) \kappa_2(a_2, a_3) \kappa_1(a_4).
\end{eqnarray*}
Note that on the right hand side of the equation above each
entry of a cumulant is a single `$a$'. The general result is
given in Theorem \ref{theorem:ks} below.

\begin{theorem}[\cite{ns2} Thm. 11.12]\label{theorem:ks}
Let $n_1, \dots, n_r$ be positive integers and $n = n_1 +
\cdots + n_r$. Let $a_1 \dots , a_n \in (\cA, \phi)$. Then
\[
\kappa_r(a_1\cdots a_{n_1}, \dots, a_{n_1 + \cdots +
  n_{r-1}+1} \cdots a_{n_1 + \cdots + n_r}) = \sum_{\pi \in
  \NC(n)} \kappa_\pi(a_1, \dots, a_n)
\]
where the sum is over all $\pi$'s such that $\pi \vee
\tau_{\vec n} = 1_n$ and $\tau_{\vec n}$ is the partition
with blocks $(1, \dots, n_1), \dots, (n_1 + \cdots +
n_{r-1}+1, \dots, n_1 + \cdots + n_r)$.
\end{theorem}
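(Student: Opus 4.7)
The plan is to prove this by two applications of the first-order moment--cumulant formula tied together by Möbius inversion on the lattice $\NC(r)$. Writing $A_i := a_{n_1+\cdots+n_{i-1}+1} \cdots a_{n_1+\cdots+n_i}$ for $1 \le i \le r$, the left-hand side is $\kappa_r(A_1, \dots, A_r)$, which by Möbius inversion of (\ref{eq:moment-cumulant}) equals
\[
\kappa_r(A_1, \dots, A_r) = \sum_{\sigma \in \NC(r)} \mu(\sigma, 1_r) \, \phi_\sigma(A_1, \dots, A_r),
\]
where $\mu$ is the Möbius function of $\NC(r)$.

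Next I would expand each $\phi_\sigma(A_1, \dots, A_r)$ back in terms of cumulants of the single $a$'s. Because $\phi_\sigma$ factors multiplicatively over the blocks of $\sigma$ and each factor $\phi\bigl(\prod_{i \in V} A_i\bigr)$ is itself a moment in the original variables, applying (\ref{eq:moment-cumulant}) once more on each factor and assembling over all blocks $V$ of $\sigma$ gives
\[
\phi_\sigma(A_1, \dots, A_r) = \sum_{\pi \in \NC(n),\ \pi \le \widehat{\sigma}} \kappa_\pi(a_1, \dots, a_n),
\]
where $\widehat{\sigma}$ denotes the partition of $[n]$ whose blocks are the unions of those intervals of $\tau_{\vec n}$ corresponding to blocks of $\sigma$. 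Substituting this into the previous display and swapping the order of summation yields
\[
\kappa_r(A_1, \dots, A_r) = \sum_{\pi \in \NC(n)} \kappa_\pi(a_1, \dots, a_n) \sum_{\sigma \in \NC(r),\ \widehat{\sigma} \ge \pi \vee \tau_{\vec n}} \mu(\sigma, 1_r).
\]
The correspondence $\sigma \leftrightarrow \widehat{\sigma}$ is an order isomorphism between $\NC(r)$ and the interval $[\tau_{\vec n}, 1_n]$ inside $\NC(n)$, so if $\eta \in \NC(r)$ is the partition matching $\pi \vee \tau_{\vec n}$, the inner sum equals $\sum_{\sigma \ge \eta} \mu(\sigma, 1_r) = [\eta = 1_r]$ by the standard Möbius identity on $\NC(r)$. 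This indicator selects exactly the $\pi$'s with $\pi \vee \tau_{\vec n} = 1_n$, which is the theorem.

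I expect the main obstacle to be the combinatorial step producing the second display above. The blocks of $\sigma \in \NC(r)$ typically correspond to non-adjacent unions of intervals inside $[n]$, and one must verify that independently chosen non-crossing partitions of each such union---when concatenated across blocks---produce exactly the set $\{\pi \in \NC(n) : \pi \le \widehat{\sigma}\}$, with no spurious crossings appearing between different blocks. This is precisely where the non-crossing hypothesis on $\sigma$ is needed, and is equivalent to verifying the order-isomorphism claim $\NC(r) \cong [\tau_{\vec n}, 1_n]$ used in the final step. Once this lattice identification is in place, the Möbius manipulation is routine.
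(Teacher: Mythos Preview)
The paper does not give its own proof of this theorem: it is stated with the citation ``[\cite{ns2} Thm.~11.12]'' and used as a standard tool. Your argument is correct and is essentially the proof found in the cited reference (Nica--Speicher, Lecture~11): M\"obius inversion on $\NC(r)$, expansion of each $\phi_\sigma(A_1,\dots,A_r)$ via the moment--cumulant relation on each block of $\widehat\sigma$, interchange of sums, and the M\"obius identity $\sum_{\sigma\ge\eta}\mu(\sigma,1_r)=[\eta=1_r]$.

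Two small points worth tightening. First, in the second display you need that for $\pi\in\NC(n)$ the join $\pi\vee\tau_{\vec n}$ taken in $\cP(n)$ already lies in $\NC(n)$ (so that it corresponds to some $\eta\in\NC(r)$); this holds because $\tau_{\vec n}$ is an interval partition, but it deserves a one-line justification since the join in $\NC(n)$ and in $\cP(n)$ do not coincide in general. Second, the assertion that independently chosen non-crossing partitions on the blocks $\widehat V$ of $\widehat\sigma$ assemble to an element of $\NC(n)$ (and conversely) is exactly the statement $[0_n,\widehat\sigma]_{\NC(n)}\cong\prod_{\widehat V\in\widehat\sigma}\NC(\widehat V)$, which follows from $\widehat\sigma\in\NC(n)$; you correctly identify this as the crux, and it is proved in \cite[Lecture~9]{ns2}. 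With those two facts in hand the argument is complete.
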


In our example above $n_1 = n_2 = 2$, $\tau_{\vec{n}} =
\{(1,2),(3,4)\}$ and of the 14 non-crossing partitions of
$[4]$ there are ten that satisfy the condition $\pi \vee
\tau_{\vec n} = 1_4$; the four that don't being $\{(1), (2),
(3), (4)\}, \{(1), (2), (3, 4)\}$, $\{(1,2), (3), (4) \}$,
and $\{(1, 2), (3,4)\}$.

We shall need the second order version of this expansion.
For example if $a_1, a_2, a_3$ are in a second order
probability space $(\cA, \phi, \phi_2)$ we have
\begin{eqnarray*}\lefteqn{%
\kappa_{1,1}(a_1a_2, a_3) = \kappa_3(a_1, a_3, a_2) +
\kappa_{2,1}(a_1, a_2, a_3)}\\ && \mbox{}+ \kappa_{1,1}(a_1,
  a_3) \kappa_1(a_2) + \kappa_1(a_1) \kappa_{1,1}(a_2, a_3).
\end{eqnarray*}

To state the theorem in the second order case we need a new
concept, namely that of a permutation that separates points.

\begin{definition}\label{def:separates_points}
Suppose we have a permutation $\pi \in S_n$ and a
subset $A \subseteq [n]$; we say that $\pi$
\textit{separates the points} of $A$, if no two points of
$A$ are in the same cycle of $\pi$.  

If $\sigma \in S_n$ and $A \subseteq [n]$ is such that
$\sigma(A) = A$, i.e $\sigma$ leaves $A$ invariant, we
denote by $\sigma|_A$ the restriction of $\sigma$ to $A$. We
can extend this to the case when $\sigma$ does not leave $A$
invariant as follows. Let $\sigma|_A$ be the permutation of
$A$ given by $\sigma|_A(k) = \sigma^r(k)$ where $r \geq 1 $
is the smallest integer such that $\sigma^r(k) \in A$ but
$\sigma^s(k) \not\in A$ for all $1 \leq s < r$. We call the
permutation $\sigma|_A$ the \textit{induced permutation} of
$\sigma$ on $A$. When $\sigma$ leaves $A$ invariant this
reduces to the restriction of $\sigma$ to $A$.

Using the idea of an induced permutation the condition of
separating points can be written as follows: $\sigma$
separates the points of $A$ if and only if $\sigma|_A =
id_A$.
\end{definition}

\begin{remark}\label{rem:separates_points} 
It was shown in \cite[Lemma 14]{mst} that the condition $\pi
\vee \tau_{\vec{n}} = 1_n$ of Theorem \ref{theorem:ks} is
equivalent to the condition that $\pi^{-1} \gamma_n$
separates the points of $N= \{n_1, n_1 + n_2, \dots, n_1 +
\cdots + n_r\}$. For example one of the permutations not
appearing in the expansion of $\kappa_2(a_1a_2, a_3a_4)$ is
$\pi = (1,2)(3,4)$. Now $\pi^{-1}\gamma_4 = (1)(2,4)(3)$ and
$N = \{2, 4\}$. So $\pi^{-1}\gamma_4$ does not separate the
points of $N$. In the second order case the condition $\pi
\vee \tau_{\vec{n}} = 1_n$ becomes that $\pi^{-1}
\gamma_{m,n}$ separates the points of $N$.
\end{remark}

\begin{proposition}[\cite{mst} Thm.~3]\label{mst}
Suppose $n_1,\dots,n_r,n_{r+1},\dots,n_{r+s}$ are positive
integers, $p=n_1+\cdots + n_r, q=n_{r+1}+\cdots+n_{r+s},$ and
\[
N=\{n_1,n_1+n_2, \dots ,n_1 + \cdots + n_{r+s}\}.
\]
Given a second probability space $(\mathcal{A},\phi,\phi_2)$
and
\[ 
a_1, \dots ,a_{n_1}, a_{n_1+1}, \dots, a_{n_1+n_2}, \dots,
a_{n_1 + \cdots + n_{r+s}} \in \mathcal{A},
\]
let $A_1 = a_1 \cdots a_{n_1}, A_2= a_{n_1+1} \cdots
a_{n_1+n_2}, \dots,\ab A_{r+s} = a_{n_1 + \cdots + n_{r+s-1}+1} \cdots
\ab a_{n_1 + \cdots + n_{r+s}}$. Then
\begin{equation} \label{MST}
\kappa_{r,s}(A_1, \dots, A_r, A_{r+1}, \dots, A_{r+s}) =
\sum_{(V, \pi)} \kappa_{(V,\pi)} (a_1, \dots ,a_{p+q}),
\end{equation}
where the summation is over those $(V,\pi)\in \cPS_{NC}(p,q)$
such that $\pi^{-1}\gamma_{p,q}$ separates the points of
$N$.
\end{proposition}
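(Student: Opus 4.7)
The plan is to adapt the proof of Theorem \ref{theorem:ks} to the partitioned-permutation setting, proceeding by induction on $r+s$. The engine is to expand $\phi_2(A_1 \cdots A_r,\, A_{r+1} \cdots A_{r+s})$ in two ways via the second-order moment-cumulant formula of Subsection \ref{subsec:second_order_cumulants}: once with the products $A_i$ as atomic entries, and once with the individual $a_j$ as atomic entries, and then to compare.

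At the coarse level one obtains
\[
\phi_2(A_1 \cdots A_r,\, A_{r+1} \cdots A_{r+s})
= \sum_{(\cU, \rho) \in \cPS_{NC}(r,s)} \kappa_{(\cU, \rho)}(A_1, \dots, A_{r+s}),
\]
in which the summand indexed by the maximal element $(1_{r+s}, \gamma_{r,s})$ is exactly the unknown $\kappa_{r,s}(A_1, \dots, A_{r+s})$. For every other $(\cU, \rho)$, multiplicativity of the cumulants forces $\kappa_{(\cU, \rho)}(A_1, \dots, A_{r+s})$ to factor as a product of first- and second-order cumulants of strictly shorter sub-products of $A_i$'s, each of which re-expands by Theorem \ref{theorem:ks} and the inductive hypothesis into a sum of fine-level cumulants $\kappa_{(V, \pi)}(a_1, \dots, a_{p+q})$ for particular $(V, \pi) \in \cPS_{NC}(p,q)$. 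Meanwhile the fine-level expansion of $\phi_2(a_1 \cdots a_p,\, a_{p+1} \cdots a_{p+q})$ is an unconstrained sum over all of $\cPS_{NC}(p,q)$. Subtracting the known proper subterms from both sides then isolates $\kappa_{r,s}(A_1, \dots, A_{r+s})$ as a sum of $\kappa_{(V,\pi)}(a_1, \dots, a_{p+q})$ over some distinguished subset of $\cPS_{NC}(p,q)$.

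The main obstacle is the combinatorial identification of this distinguished subset with $\{(V, \pi) : \pi^{-1}\gamma_{p,q}\ \text{separates the points of}\ N\}$. In view of Remark \ref{rem:separates_points}, this is the annular analogue of the first-order condition $\pi \vee \tau_{\vec n} = 1_n$, and geometrically it expresses that $(V, \pi)$ does not factor through any proper coarse partitioned permutation indexed by the $A_i$'s. The argument is bijective: if $\pi^{-1}\gamma_{p,q}$ fails to separate $N$, then its connected structure canonically produces a proper $(\cU, \rho) \in \cPS_{NC}(r,s)$ through which $(V, \pi)$ factors, and conversely every proper $(\cU, \rho)$ lifts uniquely in this way. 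Carrying this out cleanly requires a case analysis following the three types of exact factorizations of $(1_{p+q}, \gamma_{p,q})$ recalled earlier — distinguishing in particular whether the second-order joining sits inside the permutation $\pi$ (so $\pi \in S_{NC}(p,q)$) or inside the partition $V$ (so $(V, \pi) \in \cPS_{NC}(p,q)'$) — but once the lifting is set up canonically in each case, the induction closes and the stated formula follows.
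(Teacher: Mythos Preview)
The paper does not supply a proof of this proposition; it is quoted as Theorem~3 of \cite{mst} and used as a black box throughout. Your outline follows exactly the strategy of that reference: expand the fluctuation moment $\phi_2(A_1\cdots A_r,\,A_{r+1}\cdots A_{r+s})$ via the second-order moment-cumulant relation at both the coarse level (entries $A_i$) and the fine level (entries $a_j$), and use induction on $r+s$ together with Theorem~\ref{theorem:ks} to handle the lower-order terms.

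What you have written is a correct plan but not yet a proof. The substantive content lies entirely in the combinatorial identification you defer to a ``case analysis'': one must show that the $(V,\pi)\in\cPS_{NC}(p,q)$ for which $\pi^{-1}\gamma_{p,q}$ fails to separate $N$ are precisely those arising in the inductive expansion of some proper $\kappa_{(\cU,\rho)}(A_1,\dots,A_{r+s})$, with the correct multiplicities. Your phrase ``every proper $(\cU,\rho)$ lifts uniquely'' is at best ambiguous---each proper $(\cU,\rho)$ has many preimages $(V,\pi)$, and the point is rather that the coarsening map $(V,\pi)\mapsto(\cU,\rho)$ is well-defined with fibers that exactly reproduce the multiplicative expansion of $\kappa_{(\cU,\rho)}(A_1,\dots,A_{r+s})$ via Theorem~\ref{theorem:ks} and the inductive hypothesis. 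In \cite{mst} this bookkeeping across the three types of exact factorization of $(1_{p+q},\gamma_{p,q})$ occupies the bulk of the argument; you have correctly located where the difficulty lies but have not carried it out.
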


\begin{remark}\label{rem:separates_points-2}

If we let $O = \{1, n_1 +1, n_1 + n_2 +1, \dots, n_1 +
\cdots + n_{r-1}+1\} = \gamma_n(N)$ and $n = n_1 + \cdots +
n_r$, then the condition that $\pi^{-1}\gamma_n$ separates
the points of $N$ is equivalent to $\gamma_n \pi^{-1}$
separates the points of $O$. Indeed if $n_k$ and $n_l$ are
in the same orbit of $\pi^{-1}\gamma_{n}$, i.e. there is $s$
such that $(\pi^{-1}\gamma_{n})^s(n_k) = n_l$ then
$(\gamma_{n}\pi^{-1})^s(n_k+1) = n_l+1$, and conversely.
\end{remark}

\subsection{Preliminaries on even and $R$-diagonal
  operators}
\label{sec:even_and_r-diagonal}

In this subsection we review the definitions and basic
properties of the main subject of this paper: even and
$R$-diagonal operators. We shall restate the first order
results of \cite{ns1} so that the connection with our
results becomes apparent.

Recall that a probability measure determined by moments is
symmetric if and only is all of its odd moments vanish. If
$(\cA, \phi)$ is a non-commutative $*$-probability space and
$x = x^* \in \cA$, we say that $x$ is \textit{even} if
$\phi(x^{2k-1}) = 0$ for $k = 1, 2, 3, \dots $. If $x$ is
even then it is determined by its even moments, which are
just the moments of $x^2$. The corresponding relation
between the free cumulants of $x$ and the free cumulants of
$x^2$ was found by Nica and Speicher \cite{ns1}.

\begin{definition}\label{def:partition_double}
Let $\pi \in NC(n)$ be a non-crossing partition. We will 
regard $\pi$ as the permutation whose cycles are the blocks
of $\pi$ with the elements arranged in increasing order. 
Then let $\hat \pi$ be the permutation of
$[2n]$ given by $\hat\pi^2(2k) = 2 \pi(k)$ and $\hat\pi(2k)
= \gamma_{2n}(2k)$, where $\gamma_{2n}$ is the permutation
of $[2n]$ with the one cycle $(1, 2, 3,\ab \dots, 2n)$. We
call $\hat\pi$ the \textit{double} of $\pi$.
\end{definition}

\begin{definition}
Let recall from \cite{av} the notion of an even partition. A
partition is \textit{even} if all of its blocks have an even
number of elements. Likewise we say a permutation is
\textit{even }if all of its cycles have an even number of
elements. Note that this is not the usual convention used in
group theory.
\end{definition}

The theorem of Nica and Speicher is as follows.

\begin{theorem}[\cite{ns1}]\label{theorem:ns1}
Let $x = x^* \in (\cA, \phi)$ be even. Then the free
cumulants of $x^2$ can be calculated from the free cumulants
of $x$ as follows.
\begin{equation}\label{equation:ns1}
\kappa_n(x^2, \dots, x^2) =
\sum_{\pi \in \NC(n)} \kappa_{\hat\pi}(x, \dots, x).
\end{equation}
\end{theorem}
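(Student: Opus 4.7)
The plan is to combine Theorem~\ref{theorem:ks} (the Krawczyk--Speicher expansion for cumulants with products as entries) with the evenness hypothesis. Writing $x^2 = x\cdot x$ and applying that theorem with $r = n$ and $n_1 = \cdots = n_n = 2$ gives
\[
\kappa_n(x^2, \ldots, x^2) = \sum_{\sigma \in NC(2n),\ \sigma \vee \tau = 1_{2n}} \kappa_\sigma(x, \ldots, x),
\]
where $\tau = \{\{1,2\}, \{3,4\}, \ldots, \{2n-1,2n\}\}$. Since $x$ is even, $\kappa_k(x,\ldots,x) = 0$ whenever $k$ is odd, so $\kappa_\sigma(x, \ldots, x)$ vanishes unless every block of $\sigma$ has even cardinality. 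The sum therefore collapses to
\[
\sum_{\sigma \in \mathcal{E}(n)} \kappa_\sigma(x, \ldots, x), \qquad \mathcal{E}(n) := \{\sigma \in NC(2n) : \sigma \text{ even},\ \sigma \vee \tau = 1_{2n}\}.
\]

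It then suffices to prove that $\pi \mapsto \hat\pi$ is a bijection from $NC(n)$ onto $\mathcal{E}(n)$. I would verify the three defining properties of $\hat\pi$ directly from its definition. \emph{Non-crossing:} using $\hat\pi(2k) = \gamma_{2n}(2k)$ and $\hat\pi^2(2k) = 2\pi(k)$, compute $|\hat\pi|$ and $|\hat\pi^{-1}\gamma_{2n}|$ and check Biane's equality $|\hat\pi| + |\hat\pi^{-1}\gamma_{2n}| = 2n-1$. \emph{Evenness:} tracing a cycle of $\hat\pi$ through $2k \to 2k+1 \to 2\pi(k) \to 2\pi(k)+1 \to \cdots$ shows that a cycle of $\pi$ of length $\ell$ doubles to a cycle of $\hat\pi$ of length $2\ell$, so $\hat\pi$ is even and $\#(\hat\pi) = \#(\pi)$. \emph{Compatibility with $\tau$:} because $\hat\pi$ already sends each $2k$ to $2k+1 \pmod{2n}$, the join $\hat\pi \vee \tau$ contains the full cycle $\gamma_{2n}$ and hence equals $1_{2n}$.

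For the inverse direction I would recover $\pi$ from $\sigma \in \mathcal{E}(n)$ by extracting the structure induced on the even positions (after the relabeling $2k \mapsto k$). The conditions that $\sigma$ is even and $\sigma \vee \tau = 1_{2n}$ together force the blocks of $\sigma$ to interleave odd and even positions in a rigid alternating pattern; once this pattern is extracted, the restriction to even positions gives a well-defined $\pi \in NC(n)$ whose double is $\sigma$.

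The main obstacle is the surjectivity step: one must check that every even non-crossing partition satisfying $\sigma \vee \tau = 1_{2n}$ actually has the alternating cycle structure of a double, rather than some other even structure. This is a careful combinatorial argument balancing the non-crossing constraint against evenness and the connectedness forced by $\sigma \vee \tau = 1_{2n}$; once it is settled, the identity \eqref{equation:ns1} follows by substituting the bijection into the reduced Krawczyk--Speicher expansion.
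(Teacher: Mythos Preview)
Your approach is the paper's: apply Theorem~\ref{theorem:ks}, drop non-even partitions, and show $\pi \mapsto \hat\pi$ is a bijection onto the surviving index set. For the surjectivity step you flag as the main obstacle, the paper's trick is to replace the condition $\sigma \vee \tau = 1_{2n}$ by the equivalent condition that $\gamma_{2n}\sigma^{-1}$ separates the points of $O = \{1,3,\dots,2n-1\}$ (Remarks~\ref{rem:separates_points} and~\ref{rem:separates_points-2}); since any even $\sigma \in NC(2n)$ has cycles alternating between odd and even elements (each gap encloses an even number of points), $\gamma_{2n}\sigma^{-1}$ already maps $O$ to itself, so separation forces $\gamma_{2n}\sigma^{-1}(2k-1) = 2k-1$, i.e.\ $\sigma(2k) = \gamma_{2n}(2k)$, and then $\sigma = \hat\pi$ for $\pi$ defined by $2\pi(k) = \sigma^2(2k)$ --- this is Lemma~\ref{lemma:hat-bijection}.
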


For $a \in \cA$ let $a^{(1)} = a$ and $a^{(-1)} =
a^*$. Consider the $*$-cumulants $\kappa_m(a^{(\epsilon_1)},
a^{(\epsilon_2)}, \dots, a^{(\epsilon_m)})$ of $a$. Recall
(\cite{ns1}) that $a$ is $R$-diagonal if all $*$-cumulants
of $a$ are 0 except those of the form
\[
\kappa_{2n}(a, a^*, a, a^*, \dots, a, a^*) =
\kappa_{2n}(a^*, a, a^*, a, \dots , a^*, a).
\]

The relation between the cumulants of $a^*a$ and the
$*$-cumulants of $a$ is the same as between the square $x^2$
of an even operator and $x$; see Equation
(\ref{equation:ns1}).

\begin{theorem}[\cite{ns1}]\label{theorem:ns1:2}
Let $(\cA, \phi)$ be a non-commutative $*$-probability
space and $a \in \cA$ an $R$-diagonal operator. Then
\[
\kappa_{n}(a^*a, \dots, a^*a) =
\sum_{\pi \in \NC(n)} \kappa_{\hat\pi}(a^*, a, a^*, \dots, a^*, a).
\]
\end{theorem}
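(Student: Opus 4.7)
The plan is to mirror the proof of Theorem \ref{theorem:ns1}, adapting it to the $R$-diagonal setting. First I would apply Theorem \ref{theorem:ks} with $n_1=\cdots=n_n=2$, which expresses
\[
\kappa_n(a^*a,\dots,a^*a) = \mathop{\sum_{\pi \in \NC(2n)}}_{\pi \vee \tau_{\vec n}=1_{2n}} \kappa_\pi(a^*,a,a^*,a,\dots,a^*,a),
\]
where $a^*$ sits at odd positions and $a$ at even positions of $[2n]$. By Remarks \ref{rem:separates_points} and \ref{rem:separates_points-2}, the constraint $\pi \vee \tau_{\vec n}=1_{2n}$ is equivalent to $\gamma_{2n}\pi^{-1}|_O = \id_O$, where $O=\{1,3,\dots,2n-1\}$.

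Next I would use the $R$-diagonal hypothesis to prune the sum. For a block $\{i_1<\cdots<i_m\}$ of $\pi$, Definition \ref{def:r-diagonal} makes the corresponding factor $\kappa_m(a^{(\epsilon_{i_1})},\dots,a^{(\epsilon_{i_m})})$ vanish unless $m$ is even and the signs $\epsilon$ alternate; since here $\epsilon_j$ is determined by the parity of $j$, this is the same as requiring that $i_1,\dots,i_m$ alternate in parity. In particular every block of $\pi$ has even size, so $\pi$ is an even partition in $\NC(2n)$.

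From here the argument used for Theorem \ref{theorem:ns1} applies verbatim: an even $\pi \in \NC(2n)$ has cycles alternating between odd and even integers, so $\gamma_{2n}\pi^{-1}$ leaves $O$ (and $N$) invariant, and the condition $\gamma_{2n}\pi^{-1}|_O=\id_O$ then forces $\pi(2k)=\gamma_{2n}(2k)$ for every $k$. Lemma \ref{lemma:hat-bijection} identifies such $\pi$ with the doubles $\hat\sigma$ of $\sigma \in \NC(n)$, and each cycle of $\hat\sigma$ contributes a nonzero $R$-diagonal cumulant of the form $\kappa_{2l}(a,a^*,\dots,a,a^*)$ or $\kappa_{2l}(a^*,a,\dots,a^*,a)$, which by traciality are all equal to $\kappa_{2l}(a^*,a,\dots,a^*,a)$. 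Summing over $\sigma \in \NC(n)$ yields the claimed formula.

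The main technical point is already dispatched by Lemma \ref{lemma:hat-bijection}; beyond that, the only new input is the observation that the $R$-diagonality condition, together with the interleaving of $a^*$ and $a$ produced by expanding the products $a^*a$, precisely enforces evenness of $\pi$ plus alternation of parity within each block --- which is exactly the hypothesis needed to invoke the lemma.
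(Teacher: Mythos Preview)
Your proposal is correct and follows essentially the same route as the paper: apply Theorem~\ref{theorem:ks} to rewrite $\kappa_n(a^*a,\dots,a^*a)$ as a sum over $\pi\in\NC(2n)$ with $\gamma_{2n}\pi^{-1}|_O=\id_O$, use $R$-diagonality to restrict to even $\pi$, and then invoke Lemma~\ref{lemma:hat-bijection} to identify the surviving $\pi$ with doubles $\hat\sigma$. The only superfluous remark is the appeal to traciality at the end: every cycle of $\hat\sigma$ has the form $(2i_1,\gamma_{2n}(2i_1),\dots,2i_l,\gamma_{2n}(2i_l))$ and hence already begins at an even position, so each factor is automatically $\kappa_{2l}(a,a^*,\dots,a,a^*)$ and no further identification is needed.
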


%%%%%%%%%%%%%%%%%%%%%%%%%%%%%%%%%%
%%%%%%%%%%%%%%%%%%%%%%%%%%%%%%%%%%
%%%%                          %%%%
%%%%    S E C T I O N   3     %%%%
%%%%                          %%%%
%%%%%%%%%%%%%%%%%%%%%%%%%%%%%%%%%%
%%%%%%%%%%%%%%%%%%%%%%%%%%%%%%%%%%

\section{Statement of results}\label{sec:statement_of_results}

\noindent
Let us present the main results of the paper.

\begin{definition}\label{defi1}
Let $(A,\phi,\phi_2) $ be a  second order
$*$-non-commutative probability space.
 
(1) An element $x\in(\mathcal{A},\phi,\phi_2)$ is called
\textbf{second order even} (or even for short) if $x=x^*$ and
$x$ is such that odd moments vanish,
i.e. $\phi_2(x^p,x^q)=0$ unless $p$ and $q$ are both even and
$\phi(x^{2n+1})=0$ for all $n\geq0$.

(2) An element $a\in(\mathcal{A},\phi,\phi_2)$ is called
\textbf{second order $R$-diagonal} if it is $R$-diagonal
(i.e. as in Definition \ref{def:r-diagonal}) and the only
non-vanishing second order cumulants are of the form
$$
\kappa_{2p,2q}(a,a^*, \dots, a,a^*) =
\kappa_{2p,2q}(a^*,a, \dots, a^*,a).
$$
\end{definition}

\begin{remark} 
From the moment-cumulant formulas it is clear that an
element $x\in \mathcal{A}$ is even if and only if it
self-adjoint and $\kappa_{p,q}(x,\dots,x)=0$ unless $p$ and
$q$ are even and $\kappa_{2n+1}(x,\dots,x)=0$ for $n \geq
0$.
\end{remark}

The first of our results shows that, as in the first
order case, $R$-diagonality is preserved when multiplying by
a free element.

\begin{theorem}\label{MT1}
Let $\{a, a^*\}$ and $\{b, b^\ast\}$ be second order free
and suppose that $a$ is second order $R$-diagonal. Then $ab$
is second order $R$-diagonal.
\end{theorem}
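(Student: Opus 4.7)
The strategy is to verify the cumulant characterization of second order $R$-diagonality for $ab$ directly. By Definition \ref{defi1} this requires two things: that $ab$ be first order $R$-diagonal, and that the only surviving second order $*$-cumulants of $ab$ be the alternating ones $\kappa_{2p,2q}(ab, (ab)^*, \dots)$ (and their cyclic rotations). The first order part is already known: since second order freeness of $\{a, a^*\}$ and $\{b, b^*\}$ implies first order freeness, and $a$ is first order $R$-diagonal, the classical result of Nica--Speicher \cite{ns1} gives first order $R$-diagonality of $ab$. The real work is therefore the second order claim.

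Fix $p, q \geq 1$ and $\epsilon_1, \dots, \epsilon_{p+q} \in \{-1, +1\}$, and expand each factor $(ab)^{(\epsilon_i)}$ as a product of two letters: set $(c_{2i-1}, c_{2i}) = (a, b)$ if $\epsilon_i = +1$, and $(c_{2i-1}, c_{2i}) = (b^*, a^*)$ if $\epsilon_i = -1$. Applying Proposition \ref{mst} to the cumulant with products as entries, we get
\[
\kappa_{p,q}\bigl((ab)^{(\epsilon_1)}, \dots, (ab)^{(\epsilon_{p+q})}\bigr)
=
\sum_{(\cV, \pi)} \kappa_{(\cV, \pi)}(c_1, \dots, c_{2(p+q)}),
\]
with $(\cV, \pi) \in \cPS_{NC}(2p, 2q)$ such that $\pi^{-1} \gamma_{2p, 2q}$ separates $N = \{2, 4, \dots, 2(p+q)\}$. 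The second order freeness of $\{a,a^*\}$ and $\{b, b^*\}$, via the vanishing of mixed cumulants, then eliminates every $(\cV, \pi)$ for which some block of $\cV$ mixes $a$-type and $b$-type letters, so only terms in which each block of $\cV$ lies entirely in the set $A$ of $a$-positions or entirely in the set $B$ of $b$-positions survive.

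Now I would exploit the rigidity of the letter pattern: every pair $\{2i-1, 2i\}$ contains exactly one $a$-position and one $b$-position, so the annular non-crossing structure of $\pi$ combined with the monochromatic condition forces a canonical factorization of each surviving $(\cV, \pi)$ into an $a$-part on $A$ and a $b$-part on $B$. On the $a$-side, $R$-diagonality of $a$ further restricts contributing cycles to those carrying strictly alternating $a/a^*$ signs; on the $b$-side no restriction is imposed. Tracking how this alternation constraint pulls back through the pair structure $\{2i-1, 2i\}$, the $a$-side alternation forces the outer signs $\epsilon_i$ to alternate cyclically around each of the two cycles of $\gamma_{p,q}$. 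If such outer alternation fails, no $(\cV, \pi)$ survives and the cumulant vanishes; if it holds, the surviving terms assemble into precisely the shape $\kappa_{2p, 2q}(ab, (ab)^*, \dots)$ demanded by Definition \ref{defi1}.

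The main obstacle will be the partitioned-permutation piece $\cPS_{NC}(2p, 2q)' = \cPS_{NC}(2p, 2q) \setminus S_{NC}(2p, 2q)$, in which $\cV$ merges a cycle of $\pi$ on one cycle of $\gamma_{2p, 2q}$ with a cycle on the other. One has to check that such mergers occur monochromatically, that the separating-points condition passes cleanly through the $A/B$-factorization, and that the length identity $|(\cV, \pi)| = |(\cV|_A, \pi|_A)| + |(\cV|_B, \pi|_B)|$ holds so that the surviving terms genuinely correspond to second order $a$-cumulants of the required form. These are exactly the kinds of annular bookkeeping statements to be developed in Section \ref{sec:annular_non-crossing_partitions}; once they are in hand, the alternation argument of Lemma \ref{lemma:hat-bijection} (applied on the $A$-side) closes the proof in parallel with the first order case of Theorem \ref{theorem:ns1:2}.
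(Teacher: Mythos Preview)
Your setup and opening moves match the paper exactly: reduce to the second order statement, expand via Proposition~\ref{mst}, and use vanishing of mixed cumulants to force every block of $\cV$ (equivalently, every cycle of $\pi$, and in the $\cPS_{NC}'$ case the merged pair of cycles) to be entirely an $a$-block or entirely a $b$-block, with the $a$-blocks alternating by the $R$-diagonality of $a$.

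Where you diverge is in the execution. You propose a global ``canonical factorization'' of each surviving $(\cV,\pi)$ into an $A$-part and a $B$-part, worry about a length identity $|(\cV,\pi)| = |(\cV|_A,\pi|_A)| + |(\cV|_B,\pi|_B)|$, and invoke the annular machinery of Section~\ref{sec:annular_non-crossing_partitions} and Lemma~\ref{lemma:hat-bijection}. None of this is needed, and the references are misplaced: that machinery is developed for Theorems~\ref{MT2} and~\ref{MT3}, not for Theorem~\ref{MT1}. The paper's actual argument is a one-line local observation (Lemma~\ref{lemma:first}): if $\epsilon_i = -1$ then $x_{2i} = a^*$, so alternation in the $a$-cycle forces $\pi(2i)$ to land on an $a$, i.e.\ at an odd position; the separating-points condition on $O$ then pins this odd position down to $\gamma_{2p,2q}(2i)$, whence $\epsilon_{\gamma_{p,q}(i)} = +1$. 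This single constraint immediately rules out $\epsilon_i = \epsilon_{\gamma_{p,q}(i)} = -1$, and a symmetric two-line argument rules out $\epsilon_i = \epsilon_{\gamma_{p,q}(i)} = +1$. The $S_{NC}$ and $\cPS_{NC}'$ cases are handled uniformly by this lemma; there is no special obstacle in the partitioned-permutation piece.

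So your intuition in the paragraph beginning ``Tracking how this alternation constraint pulls back\ldots'' is exactly right, but the mechanism is much more elementary than you anticipate: no factorization, no length bookkeeping, just one local step combining $a$-alternation with the separating condition.
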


As we will see, the combinatorics of even and $R$-diagonal
operators are controlled by even annular non-crossing
partitions. As already noted, if $\pi \in \NC(n)$ is even
then for every $k$, $k$ and $\pi(k)$ have the opposite
parity, because in the gaps, i.e. between $k$ and $\pi(k)$,
there must always be an even number of elements.

In the case of non-crossing annular permutations $\pi \in
S_{NC}(2p,2q)$ something a little weaker than this
happens. If $k$ and $\pi(k)$ are in the same circle
(i.e. the same cycle of $\gamma_{2p,2q}$), they have the
opposite parity. However if $k$ and $\pi(k)$ are in
different circles they may have the same parity. See Lemmas
\ref{lemma:parity-preserving} and
\ref{lemma:parity-reversing}.

\begin{figure}[t]
\qquad
\includegraphics{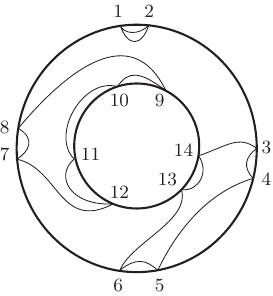}\hfill
\includegraphics{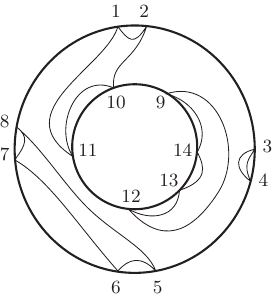}\qquad\hbox{}
\caption{\small\label{fig:parity_reversing}Two examples of even
  non-crossing annular permutations. We call the one on the left
  \textit{parity reversing} because for every $k$, $k$ and $\pi(k)$
  have the opposite parity. We call the one on the right
  \textit{parity preserving} because when $k$ and $\pi(k)$ are 
  on opposite circles they have the same parity.}
\end{figure}

Since for $x\in \mathcal{A}$ even or $R$-diagonal the
distributions of $xx^*$ may be calculated from the even
(resp. alternating) cumulants of $x$, let us introduce some
notations to encode this information.

\begin{definition}\label{def3}
Let $(A,\phi,\phi_2) $ be a second order non-commutative
probability space.

(1) Let $a$ be  second order $R$-diagonal. Define
$\beta^{(a)}_n:=\kappa_{2n}(a, a^*, \dots, a,\ab a^*)$ and
\begin{equation}
\beta_{p,q}^{(a)}:=\kappa_{2p,2q}(a,a^*,\dots,a, a^*).
\end{equation}
The sequences $(\beta_{n}^{(a)})_{n\geq1}$ and
$(\beta_{p,q}^{(a)})_{p,q\geq1}$ are called the (first and
second order) \textbf{determining sequences} of $a$

(2) Let $x$ be a second order even element. Define
$\beta^{(x)}_n:=\kappa_{2n}^{(x)} := \kappa_{2n}(x, \dots,
x)$ and letting $\kappa_{p,q}^{(x)} = \kappa_{p,q}(x, \dots,
x)$, we set
\begin{equation} \label{determining 1}
\beta^{(x)}_{p,q} := \kappa_{2p,2q}^{(x)} + \sum_{\pi \in
  \sncaplus{2p}{2q} } \kappa^{(x)}_{\pi}
\end{equation}
where, $\sncaplus{2p}{2q}$ denotes the set of even
non-crossing annular permutations $\pi$ with only through
cycles such that whenever $k$ and $\pi(k)$ are in different
cycles of $\gamma_{2p,2q}$ we have that $k$ and $\pi(k)$
have the same parity. See Figure \ref{fig:parity_reversing},
Definition \ref{def:reversing}, and Notation
\ref{not:all_thorough} for more details. We will call the
families $(\beta_{n}^{(x)})_{n\geq1}$ and
$(\beta_{p,q}^{(x)})_{p,q\geq1}$ the (first and second
order) \textbf{determining sequences} of $x$.
\end{definition}

Recall from \cite{ns2} that in the first order case the
cumulants and determining sequence are related by the
following equations.  For $x$ an even operator we 
have (see \cite[Eq.~11.15]{ns2})
\begin{equation}\label{eq:first_order_even_determining}
\kappa^{(x^2)} = \beta^{(x)} * \zeta \textrm{\ and\ }
\beta^{(x)} = \kappa^{(x^2)} * \mu.
\end{equation}
where $\zeta$ and $\mu$ are respectively the zeta and
M\"obius functions of the lattice of non-crossing partitions
\cite[Lecture 10]{ns2}. The convolution here is over the
lattice of non-crossing partitions; Equation
(\ref{eq:first_order_even_determining}) means that
\[
\kappa_n^{(x^2)} = \sum_{\pi \in NC(n)} \beta_\pi^{(x)}. 
\]
For an $R$-diagonal operator,  $a$,  we have
\begin{equation}
\kappa^{(aa^*)} = \beta^{(a)} * \zeta \textrm{\ and\ }
\beta^{(a)} = \kappa^{(aa^*)} * \mu.
\end{equation}  
See \cite[Prop.~15.6]{ns2}. As above this means that
\[
\kappa_n^{(aa*)} = \sum_{\pi \in NC(n)} \beta^{(a)}_\pi.
\]

The second of our results gives a formula for the second
order cumulants of $aa^*$ in terms of the determining
sequence of $a$.

\begin{theorem} \label{MT2}
Let $a$ be a second order $R$-diagonal operator with determining
sequences $(\beta^{(a)}_{n})_{n\geq1}$ and
$(\beta^{(a)}_{p,q})_{p,q\geq1}$ then we have
\begin{equation} \label{formula:main1}
\kappa_{p,q}(aa^*,\dots,aa^*)=\sum_{(\mathcal{V},\pi) \in
  \cPS_{NC}(p,q)}\beta^{(a)}_{(\mathcal{V},\pi)}.
\end{equation}
\end{theorem}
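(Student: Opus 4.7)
The plan is to mimic the first order argument of Theorem \ref{theorem:ns1:2}: use Proposition \ref{mst} in place of Theorem \ref{theorem:ks}, then apply an annular analogue of the doubling bijection of Lemma \ref{lemma:hat-bijection} to identify surviving partitioned permutations with elements of $\cPS_{NC}(p,q)$.

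First, expand each factor $aa^*$ into two letters and apply Proposition \ref{mst} to obtain
\[
\kappa_{p,q}(aa^*,\dots,aa^*) = \sum_{(V,\pi)} \kappa_{(V,\pi)}(a,a^*,a,a^*,\dots,a,a^*),
\]
where the sum runs over $(V,\pi)\in\cPS_{NC}(2p,2q)$ such that $\pi^{-1}\gamma_{2p,2q}$ separates the points of $N=\{2,4,\dots,2p,\,2p+2,\dots,2p+2q\}$. By Remark \ref{rem:separates_points-2} this is equivalent to $\gamma_{2p,2q}\pi^{-1}$ fixing every odd index, the annular analogue of the condition $\gamma_{2n}\pi^{-1}|_O=\id_O$ that controlled the first order proof.

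Next I use second order $R$-diagonality to restrict the sum. Odd positions carry the letter $a$ and even positions carry $a^*$, so any unmarked cycle must be even with strictly alternating $\epsilon$'s, and similarly for the marked cycle via the nonvanishing $\kappa_{2p',2q'}(a,a^*,\dots,a,a^*)$. Combining alternation with the separation condition forces $\pi(2k)=\gamma_{2p,2q}(2k)$ for every even index, exactly as in (\ref{eqn:short-proof}). I then set up the annular doubling: given $(\mathcal{V}',\pi')\in\cPS_{NC}(p,q)$, identify $[p+q]$ with the even positions of $[2p+2q]$ (so $[p]$ matches $\{2,\dots,2p\}$ and $[p+1,p+q]$ matches $\{2p+2,\dots,2p+2q\}$) and define $\hat\pi$ by $\hat\pi(2k)=\gamma_{2p,2q}(2k)$ and $\hat\pi^2(2k)=2\pi'(k)$; lift $\mathcal{V}'$ to $\hat{\mathcal{V}}'$ by doubling each block accordingly. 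The claim, generalising Lemma \ref{lemma:hat-bijection}, is that $(\mathcal{V}',\pi')\mapsto(\hat{\mathcal{V}}',\hat\pi')$ is a bijection between $\cPS_{NC}(p,q)$ and the admissible set from step one. Each of the three types of partitioned permutations in $\cPS_{NC}$ (annular permutations in $S_{NC}$, and the two flavors of $\cPS_{NC}'$ obtained by joining cycles of $\pi'$ or of $(\pi')^{-1}\gamma_{p,q}$) lifts to the corresponding type on the double.

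With the bijection in hand, each unmarked cycle of $\pi'$ of length $k$ doubles to an alternating cycle of length $2k$ of $\hat\pi'$, contributing $\beta_k^{(a)}=\kappa_{2k}(a,a^*,\dots,a,a^*)$; the marked block, with parts of size $k$ and $l$ on the two circles of $\gamma_{p,q}$, doubles to sizes $2k$ and $2l$ on $\gamma_{2p,2q}$ and contributes $\beta_{k,l}^{(a)}=\kappa_{2k,2l}(a,a^*,\dots,a,a^*)$. Taking products gives $\kappa_{(\hat{\mathcal{V}}',\hat\pi')}(a,a^*,\dots,a,a^*)=\beta_{(\mathcal{V}',\pi')}^{(a)}$, and summing over $\cPS_{NC}(p,q)$ produces (\ref{formula:main1}). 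The main obstacle is the combinatorial correctness of the annular doubling, specifically for through cycles and for joined pairs of cycles that straddle the two circles: one must verify that $R$-diagonality combined with the separation condition isolates precisely the parity-reversing (not parity-preserving) through cycles, so that halving the indices produces a well-defined annular non-crossing permutation on the halved annulus. This is where the technical material of Section \ref{sec:annular_non-crossing_partitions}, including Lemmas \ref{lemma:parity-preserving} and \ref{lemma:parity-reversing}, is needed.
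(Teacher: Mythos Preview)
Your approach is essentially the paper's own: apply Proposition \ref{mst}, use $R$-diagonality to force each surviving cycle to be even and parity-alternating (hence parity-reversing in the sense of Definition \ref{def:reversing}), and then invoke the annular doubling bijection. The paper packages the bijection as Lemma \ref{lemma check} for the $S_{NC}$ part and Proposition \ref{prop-even3} for the $\cPS_{NC}'$ part, and notes explicitly that parity-preserving $\pi\in S_{NC}^+(2p,2q)$ contribute zero because adjacent letters along a through cycle then fail to alternate between $a$ and $a^*$---exactly the obstruction you flag at the end.

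Two small corrections. First, the passage ``equivalent to $\gamma_{2p,2q}\pi^{-1}$ fixing every odd index'' overstates Remark \ref{rem:separates_points-2}: separation of $O$ by $\gamma_{2p,2q}\pi^{-1}$ only says no two odd indices share a cycle; fixing follows only after you have used $R$-diagonality to force parity reversal (this is Lemma \ref{lemma:singletons}). You do recover the correct statement two sentences later. Second, $\cPS_{NC}(p,q)$ has only two kinds of elements, $S_{NC}(p,q)$ and $\cPS_{NC}(p,q)'$, not three: the ``third factorization'' in \S\ref{subsec:second_order_cumulants} refers to the second factor $(\cV,\pi^{-1}\gamma_{p,q})$ of an exact factorization, not to an additional species of first factor. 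So there is no second flavor of $\cPS_{NC}'$ to lift; the bijection only needs Lemma \ref{lemma check} and Proposition \ref{prop-even3}.
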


The proof of the last theorem will rely on the formula for
cumulants with products as arguments (Proposition \ref{mst}), and the
observation that the set 
$\{(\mathcal{V},\pi) \in
\cPS_{NC}(2p,2q)^{-}\mid \gamma_{2p,2q}\pi^{-1}$ separates
the points of $O\}$ is in bijection with $\cPS_{NC}(p,q)$
where $O = \{1, 3, 5, 7, \dots, 2p+2q-1\}$. By 
$\cPS_{NC}^-(2p, 2q)$ we mean all $(\cV, \pi) \in
\cPS_{NC}(2p, 2q)$ such that the cycles of $\pi$ alternate 
between even and odd numbers. 

Similarly we give a formula for the second order cumulants
of $x^2$ in terms of the determining sequence of $x$.
Analogous formulas for the first order case have been used
in \cite{ahs} to prove that properties of $x$ are
transferred to $x^2$, (e.g. free infinite divisibility or
representations as the multiplication with a free Poisson).

\begin{theorem}\label{MT3}
Let $x$ be an even element with determining sequences
$(\beta^{(x)}_{n})_{n\geq1}$ and
$(\beta^{(x)}_{p,q})_{p,q\geq1}$ then the second order
cumulants of $x^2$ are given by
\begin{equation} \label{formula:main2}
\kappa_{p,q}(x^2,\dots,x^2)=\sum_{(\mathcal{V},\pi) \in
  \cPS_{NC}(p,q)}\beta^{(x)}_{(\mathcal{V},\pi)}.
\end{equation}
\end{theorem}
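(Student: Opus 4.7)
The plan is to adapt both the argument of Theorem \ref{MT2} and the disk-case proof of Theorem \ref{theorem:ns1} (the short proof via equation (\ref{eqn:short-proof})) to the annular partitioned-permutation setting. First I would apply Proposition \ref{mst} to $\kappa_{p,q}(x^2,\dots,x^2)$ with $n_1=\cdots=n_{p+q}=2$, obtaining
\[
\kappa_{p,q}(x^2,\dots,x^2)=\sum_{(\mathcal{U},\tau)} \kappa^{(x)}_{(\mathcal{U},\tau)},
\]
where the sum ranges over $(\mathcal{U},\tau)\in\cPS_{NC}(2p,2q)$ such that $\tau^{-1}\gamma_{2p,2q}$ separates the points of $N=\{2,4,\dots,2p+2q\}$. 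Using Remark \ref{rem:separates_points-2}, I would replace this condition by the equivalent requirement that $\gamma_{2p,2q}\tau^{-1}$ separates the points of $O=\{1,3,\dots,2p+2q-1\}$. Because $x$ is second order even, every first or second order cumulant of $x$ of odd total length vanishes, so only even $(\mathcal{U},\tau)$ survive — every cycle of $\tau$ has even length, and every marked pair of cycles in $\mathcal{U}$ has even total cardinality.

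The core task is then to build a weight-preserving bijection between this restricted index set and $\cPS_{NC}(p,q)$. For cycles of $\tau$ whose support lies inside a single cycle of $\gamma_{2p,2q}$, Lemma \ref{lemma:hat-bijection} is directly applicable: such an even cycle on a segment of $[2l]$ (satisfying the $O$-separation there) is the double $\hat\sigma$ of some $\sigma \in NC(l)$, and its multiplicative contribution is exactly $\beta^{(x)}_l=\kappa_{2l}^{(x)}$. For a through cycle of $\tau$ (one that meets both cycles of $\gamma_{2p,2q}$) I would invoke Lemmas \ref{lemma:parity-preserving} and \ref{lemma:parity-reversing} to classify it as either parity-reversing or parity-preserving. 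A parity-reversing through cycle is the genuine annular analogue of a double and collapses to a single through cycle of some $\pi \in S_{NC}(p',q')$, contributing $\kappa^{(x)}_{2p',2q'}$. The parity-preserving through cycles cannot be collapsed individually; grouping them together with the parity-reversing case over all admissible $(p',q')$-shaped collapses reproduces exactly the right-hand side of (\ref{determining 1}), so the combined contribution of all through-cycle configurations attached to a single annular cycle on the $(p,q)$-annulus is $\beta^{(x)}_{p',q'}$. An analogous parity analysis, applied separately to the two first-order halves of a marked pair in $\mathcal{U}$, handles the $\cPS_{NC}(p,q)'$ piece of the outside sum.

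Since both $\kappa^{(x)}_{(\mathcal{U},\tau)}$ and $\beta^{(x)}_{(\mathcal{V},\pi)}$ are multiplicative over cycles and marked pairs, the cycle-by-cycle identification above extends to the full partitioned permutations and, after re-grouping the sum on the left according to the image $(\mathcal{V},\pi) \in \cPS_{NC}(p,q)$ of the bijection, yields (\ref{formula:main2}).

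I expect the main obstacle to be precisely the parity-based analysis of through cycles: one must verify that the $O$-separation condition together with evenness forces every through cycle of $\tau$ into exactly one of the two types of Lemmas \ref{lemma:parity-preserving}--\ref{lemma:parity-reversing}, and that conversely every element of $\sncaplus{2p'}{2q'}$ arises exactly once from an admissible $(\mathcal{U},\tau)$. This is the step at which the annular setting genuinely diverges from the disk case of Lemma \ref{lemma:hat-bijection}, and it is what forces the extra summation over $\sncpp$ in the definition (\ref{determining 1}) of $\beta^{(x)}_{p,q}$.
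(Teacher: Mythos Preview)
Your overall plan --- apply Proposition \ref{mst}, restrict to even $(\mathcal{U},\tau)$ by the evenness of $x$, and then split the surviving terms according to the parity behaviour of through cycles via Lemmas \ref{lemma:parity-preserving} and \ref{lemma:parity-reversing} --- is exactly the paper's route. However, you have misidentified which pieces on the $(2p,2q)$-side feed into which pieces of $\cPS_{NC}(p,q)$, and in particular where the second-order cumulant $\kappa_{2p',2q'}^{(x)}$ in (\ref{determining 1}) comes from.

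A cycle of $\tau$ always contributes a \emph{first}-order factor $\kappa_{2l}^{(x)}$, never a second-order one, so a parity-reversing through cycle cannot contribute $\kappa^{(x)}_{2p',2q'}$ as you write. The correct split, and the one the paper carries out, is this. The parity-reversing permutations $\tau\in S_{NC}^-(2p,2q)$ satisfying the $O$-separation are in bijection with $S_{NC}(p,q)$ via Lemma \ref{lemma check}, and their total contribution is $\sum_{\sigma\in S_{NC}(p,q)}\beta^{(x)}_\sigma$, a product of first-order $\beta$'s only (Lemma \ref{proposition:parity-reversing2}). It is the \emph{other} two pieces, namely the parity-preserving $\tau\in S_{NC}^+(2p,2q)$ and the marked $(\mathcal{U},\tau)\in\cPS_{NC}(2p,2q)'$, that together account for $\cPS_{NC}(p,q)'$: for each $(\mathcal{V},\sigma)\in\cPS_{NC}(p,q)'$, Proposition \ref{prop-even3} supplies exactly one marked $(\widehat{\mathcal{V}},\hat\sigma)\in\cPS_{NC}(2p,2q)'$, and this is what contributes the $\kappa^{(x)}_{2p',2q'}$ term of (\ref{determining 1}); Proposition \ref{prop-even1} identifies the fibre $S_{NC}^{(\mathcal{V},\sigma)}(2p,2q)\subset S_{NC}^+(2p,2q)$ contributing the $\sncaplus{2p'}{2q'}$ sum; and Lemma \ref{lem-even4} says these add up to $\beta^{(x)}_{(\mathcal{V},\sigma)}$. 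In short, the second-order piece of $\beta^{(x)}_{p',q'}$ is fed by the marked partitioned permutations on the $(2p,2q)$-annulus, not by parity-reversing through cycles; once you swap those two roles your outline becomes the paper's proof.
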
 

Note that even though formulas (\ref{formula:main1}) and
(\ref{formula:main2}) are the same, the definition of second
order determining sequences for even operators and for
$R$-diagonal ones are quite different. Furthermore,  note that using the 
second order zeta function, $\zeta$, (see \cite[\S5.4]{cmss})
we may respectively rewrite Equations (\ref{formula:main1}) 
and (\ref{formula:main2}) as
\begin{equation}\label{equation:beta_zeta}
\kappa^{(aa^*)} = \beta^{(a)} * \zeta \textrm{ and }
\kappa^{(x^2)} = \beta^{(x)} * \zeta.
\end{equation}
Then by \cite[Thm.~6.3]{cmss} we may also rewrite
Theorems \ref{MT2} and \ref{MT3} (together with their first
order counterparts) in terms of generating functions as
follows.

\begin{theorem} \label{series xx}
Let $x$ be either a second order even or second order
$R$-diagonal element with determining sequences
$(\beta_{n})_{n\geq1}$ and
$(\beta_{p,q})_{p,q\geq1}$. Denote by
$\kappa_n=\kappa_n^{(xx^*)}$ and
$\kappa_{p,q}:=\kappa_{p,q}^{(xx^*)}$, $($or $\kappa_n =
\kappa_n^{x^2}$ and $\kappa_{p,q}^{(x^2)}$ in case $x$ is
even$)$ and define the formal power series
$$
B(z)=\frac{1}{z}\sum_{n\geq1}\beta_n z^{n}, \qquad
B(z,w)=\frac{1}{zw}\sum_{m,n\geq1}\beta_{m,n} z^{m}w^{n}
$$
and 
$$
C(z)=\frac{1}{z}+\frac{1}{z}\sum_{n\geq1}\kappa_n z^{-n},
\qquad C(z,w)=\frac{1}{zw}\sum_{n,m\geq1}\kappa_{m,n}
z^{-m}w^{-n}.
$$
Then we have as a formal power series the relations
$$\frac{1}{C(z)}+B(C(z))=z,$$
and 
\[
C(z,w)=C'(z)C'(w)B(C(z),C(w))+ \frac{\partial^2}{\partial
  z\partial w} \log \left(\frac{C(z)-C(w)}{z-w}\right).
\]
\end{theorem}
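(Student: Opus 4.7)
The plan is to reduce Theorem \ref{series xx} to the general translation between second order moment-cumulant identities and functional equations between their generating series, which is recorded at the end of Section \ref{subsec:second_order_cumulants} and proved in \cite[Thm.~39]{ms2}. The crucial observation is that the sequences $(\kappa_n, \kappa_{p,q})$ and $(\beta_n, \beta_{p,q})$ satisfy the abstract moment-cumulant relations
\[
\kappa_n = \sum_{\pi \in \NC(n)} \beta_\pi, \qquad
\kappa_{p,q} = \sum_{(\cV, \pi) \in \cPS_{NC}(p,q)} \beta_{(\cV, \pi)}.
\]
The first identity is Theorem \ref{theorem:ns1:2} in the $R$-diagonal case and Theorem \ref{theorem:ns1} in the even case; the second identity is exactly Theorem \ref{MT2} in the $R$-diagonal case and Theorem \ref{MT3} in the even case. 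In other words, $(\beta, \kappa)$ play the role of cumulants and moments of an abstract second order system with the same combinatorial structure as in Section \ref{subsec:second_order_cumulants}.

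With this dictionary in hand, the first equation $1/C(z) + B(C(z)) = z$ is obtained by the standard $R$-transform derivation of \cite[Lec.~16]{ns2}: Lagrange inversion applied to $\kappa_n = \sum_{\pi \in \NC(n)} \beta_\pi$ is a purely formal power series manipulation that never uses any interpretation of $\kappa_n$ or $\beta_n$ beyond the moment-cumulant formula. For the second order equation I would replay the derivation of \cite[Thm.~39]{ms2} step for step. The key step is to split the sum on $\cPS_{NC}(p,q)$ according to the decomposition $\cPS_{NC}(p,q) = S_{NC}(p,q) \sqcup \cPS_{NC}(p,q)'$ recalled in Section 2.3: the $S_{NC}(p,q)$ piece carries exactly one genuine second order factor $\beta_{p',q'}$ surrounded by first order factors on each of the two circles, and reassembles by a product-over-cycles computation into $C'(z)C'(w) B(C(z), C(w))$, while the $\cPS_{NC}(p,q)'$ piece uses only first order $\beta_{n_i}$'s but couples two cycles across the two circles, and reassembles into $\partial_z \partial_w \log\!\left( (C(z)-C(w))/(z-w) \right)$ via the standard manipulation with the Cauchy-kernel logarithmic derivative.

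The main, and essentially the only, obstacle is to verify that every step of the derivation in \cite{cmss}/\cite{ms2} uses nothing about the underlying probability space beyond the two displayed moment-cumulant formulas; since that derivation is purely formal, the substitution $\phi(a^n) \leadsto \kappa_n$, $\kappa_n^a \leadsto \beta_n$, and analogously at the second order $\phi_2(a^p,a^q) \leadsto \kappa_{p,q}$, $\kappa_{p,q}^a \leadsto \beta_{p,q}$, goes through verbatim and delivers Theorem \ref{series xx}.
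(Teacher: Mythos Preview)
Your overall strategy is exactly the one the paper has in mind: it introduces Theorem \ref{series xx} with the sentence ``We may rewrite Theorems \ref{MT2} and \ref{MT3} (together with their first order counterparts) in terms of generating functions as follows,'' and relies on the formal translation recorded at the end of \S\ref{subsec:second_order_cumulants} (i.e.\ \cite[Thm.~39]{ms2}). So the reduction to the abstract moment--cumulant relations
\[
\kappa_n=\sum_{\pi\in\NC(n)}\beta_\pi,\qquad
\kappa_{p,q}=\sum_{(\cV,\pi)\in\cPS_{NC}(p,q)}\beta_{(\cV,\pi)}
\]
is the right move, and the observation that the derivation in \cite{cmss,ms2} is purely formal is what makes the substitution $\phi\leadsto\kappa$, $\kappa\leadsto\beta$ legitimate.

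There is, however, a swap in your description of the two pieces of $\cPS_{NC}(p,q)$. For $\pi\in S_{NC}(p,q)$ the quantity $\beta_\pi$ is a product of \emph{first order} factors $\beta_{|c|}$ over the cycles $c$ of $\pi$; there is no second order $\beta_{p',q'}$ appearing. It is this part that assembles into the logarithmic term $\partial_z\partial_w\log\bigl((C(z)-C(w))/(z-w)\bigr)$. Conversely, for $(\cV,\pi)\in\cPS_{NC}(p,q)'$ the marked block of $\cV$ contributes a genuine second order factor $\beta_{k,l}$, and the remaining (unmarked) cycles of $\pi$ contribute first order $\beta$'s; this is the piece that gives $C'(z)C'(w)\,B(C(z),C(w))$. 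Once you interchange the two attributions, your sketch goes through as written.
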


Finally, we prove the following theorem which gives moments
and cumulants of products of free random variables. For $\pi
\in S_{NC}(p, q)$ we let $\textit{Kr}(\pi) =
\pi^{-1}\gamma_{p,q}$ denote the Kreweras complement of
$\pi$. Also we shall say that a permutation $\pi$ is
$k$-alternating if $\pi(i)=i+1$ mod $k$. The set of
$k$-alternating elements of $S_{NC}(kp,kq)$ is denoted
$S_{NC}^{k \textit{-alt}}(kp,kq)$. A permutation is $k$-equal
if every cycle is of size $k$. The set of $k$-alternating
and $k$ equal permutations in $S_{NC}(kp,kq)$ is denoted
$\snckea{kp}{kq}$, see Definition \ref{def:divisibility}.

\begin{theorem} 
[Second order Moments and Cumulants of Products of Free
  Variables]
\label{products} 
Let $a_1, \dots, a_k$ be operators which are second order
free and such that $\kappa_{p,q}^{(a_i)}=0$ for all $p$ and
$q$.  Denote by $a:=a_1a_2\cdots a_k$. Then
\begin{equation}\label{11a}
\phi_2(a^p,a^q)=\sum_{\pi \in \snckalt{kp}{kq}}
\kappa_{\Kr(\pi)}(a_1,a_2,\dots,a_k,\dots,a_1,a_2,\dots,a_k).
\end{equation}
Furthermore,
\begin{equation} \label{12a}
\kappa_{p,q}(a,\dots,a)=\sum_{\pi \in \snckea{kp}{kq}}
\kappa_{\Kr(\pi)}(a_1,\dots,a_k,\dots,a_1,\dots,a_k).
\end{equation}
\end{theorem}
This last theorem is of importance in wireless communication
because it includes the important case of products of
complex Wishart random matrices \cite{zwsmh}.

%%%%%%%%%%%%%%%%%%%%%%%%%%%%%%%%%%
%%%%%%%%%%%%%%%%%%%%%%%%%%%%%%%%%%
%%%%                          %%%%
%%%%    S E C T I O N   4     %%%%
%%%%                          %%%%
%%%%%%%%%%%%%%%%%%%%%%%%%%%%%%%%%%
%%%%%%%%%%%%%%%%%%%%%%%%%%%%%%%%%%

\section{Combinatorial lemmas on even annular permutations}
\label{sec:annular_non-crossing_partitions}

In this section we prove Propositions \ref{prop-even1} and
\ref{prop-even3}, which are the combinatorial results needed
to prove Theorems \ref{MT2} and \ref{MT3}. The case of an
even operator, Theorem \ref{MT3}, requires an analysis of
whether parity is preserved when crossing to the other
circle, c.f. Figure \ref{fig:parity_reversing}.

\begin{definition}\label{def:through_block}
Recall that $\gamma_{p,q}$ is the permutation in $S_{p+q}$
with the two cycles $(1,2,3, \dots, p)(p+1, \dots, p+q)$ and
$S_\NC(p,q)$ (the non-crossing annular permutations) is the
set of permutations $\pi$ in $S_{p+q}$ such that
at least one cycle of $\pi$ meets both cycles of
$\gamma_{p,q}$ and $|\pi| + |\pi^{-1}\gamma_{p,q}| =
|\gamma_{p,q}| + 2$, or equivalently $\#(\pi) +
\#(\gamma_{p,q} \pi^{-1}) = p + q$.  A cycle of $\pi$ that
meets both cycles of $\gamma_{p,q}$ is a \textit{through
  cycle}.
\end{definition}

For $\pi$, an even non-crossing permutation on $[n]$, $k$
and $\pi(k)$ always have the opposite parity; in the annular case
something a little weaker holds. See Lemmas
\ref{lemma:parity-preserving} and
\ref{lemma:parity-reversing}.

\begin{remark}\label{remark:unfolding}
Let $\pi \in S_\NC(p, q)$ then we can \textit{unfold} $\pi$
into a non-crossing partition on $[p + q]$. This unfolding
is not unique, but it is useful in reducing the annular case
to the disc case. We first illustrate this with an
example. Suppose $\pi = (1,5)(2,6)(3,4,7,8)$. Then $\pi \in
S_\NC(5,3)$. Let $\tilde\gamma = (1,2,3,4,7,8,6,5)$. Then
$\pi$ is non-crossing with respect to $\tilde\gamma$ in that
$|\pi| + |\pi^{-1}\tilde\gamma| = |\tilde\gamma|$, as in \S
\ref{subsec:preliminaries_statements}. See Figure
\ref{figure:unfolding}. The next lemma shows that we can do
this for every element of $S_{NC}(p,q)$.
\end{remark}

\setbox1=\hbox{\includegraphics[scale=0.85]{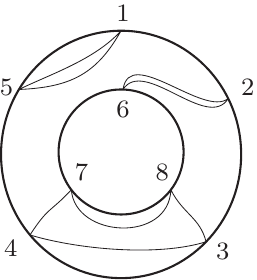}}
\setbox2=\hbox{\includegraphics[scale=0.85]{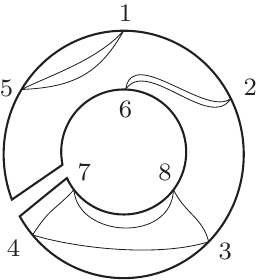}}
\setbox3=\hbox{\includegraphics[scale=0.85]{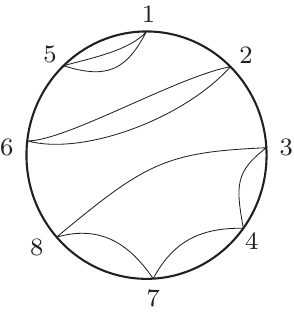}}

\begin{figure}

\noindent
$\vcenter{\hsize\wd1\box1}$\hfill
$\vcenter{\hsize\wd2\box2}$\hfill
$\vcenter{\hsize\wd3\box3}$

\caption{\small An unfolding of $\pi \in S_\NC(5,3)$. We cut
  a channel between the two circles following a cycle of
  $\pi$. In this example we go along the path from 4 to
  7. This turns the annulus into a disc. See Remark
  \ref{remark:unfolding} and Figure \ref{fig10-11} for
  another example. \label{figure:unfolding}}
\end{figure}

\begin{lemma}\label{lemma:unfolding}
Let $\pi \in S_\NC(p,q)$ and $k$ be such that $k$ and
$\pi(k)$ are in different cycles of $\gamma_{p,q}$. Let
$\tilde\gamma = \gamma_{p,q} \cdot(k,
\gamma_{p,q}^{-1}\pi(k))$, i.e. the product of $\gamma_{p,q}$ 
and the transposition $(k, \gamma_{p,q}^{-1}\pi(k))$. 
Then $\tilde\gamma$ has one cycle
and $|\pi| + |\pi^{-1}\tilde\gamma| = |\tilde\gamma|$,
i.e. $\pi$ is non-crossing with respect to $\tilde\gamma$.
\end{lemma}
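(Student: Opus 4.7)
The plan is to use the standard fact that multiplying a permutation by a transposition either merges two cycles (if the two points lie in different cycles) or splits one cycle (if they lie in the same cycle), with the cycle count changing by $-1$ or $+1$ accordingly. The proof reduces to identifying the cycle structures of $k$ and $j := \gamma_{p,q}^{-1}\pi(k)$ with respect to $\gamma_{p,q}$ on the one hand, and with respect to $\pi^{-1}\gamma_{p,q}$ on the other.

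First I would observe that $j$ lies in the same cycle of $\gamma_{p,q}$ as $\pi(k)$, hence $k$ and $j$ lie in \emph{different} cycles of $\gamma_{p,q}$ (by the assumption on $k$). Since $\tilde\gamma = \gamma_{p,q}\cdot (k,\gamma_{p,q}^{-1}\pi(k)) = \gamma_{p,q}\cdot(k,j)$, applying the merging rule shows
\[
\#(\tilde\gamma) = \#(\gamma_{p,q}) - 1 = 1,
\]
so $\tilde\gamma$ is a single cycle of length $p+q$ and $|\tilde\gamma| = p+q-1$.

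Next, I would compute the action of $\pi^{-1}\gamma_{p,q}$ at the point $j$: directly,
\[
(\pi^{-1}\gamma_{p,q})(j) = \pi^{-1}(\gamma_{p,q}(j)) = \pi^{-1}(\pi(k)) = k,
\]
so $k$ and $j$ \emph{are} in the same cycle of $\pi^{-1}\gamma_{p,q}$. Writing $\pi^{-1}\tilde\gamma = (\pi^{-1}\gamma_{p,q})\cdot(k,j)$ and applying the splitting rule yields
\[
\#(\pi^{-1}\tilde\gamma) = \#(\pi^{-1}\gamma_{p,q}) + 1, \qquad \text{i.e. } |\pi^{-1}\tilde\gamma| = |\pi^{-1}\gamma_{p,q}| - 1.
\]

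Finally, I would combine these with the defining identity $|\pi| + |\pi^{-1}\gamma_{p,q}| = |\gamma_{p,q}| + 2 = p+q$ for $\pi \in S_{NC}(p,q)$ to conclude
\[
|\pi| + |\pi^{-1}\tilde\gamma| = |\pi| + |\pi^{-1}\gamma_{p,q}| - 1 = p+q - 1 = |\tilde\gamma|,
\]
which is precisely the Biane geodesic equality saying $\pi$ is non-crossing with respect to the single-cycle permutation $\tilde\gamma$. There is no real obstacle here beyond bookkeeping; the only point requiring care is the convention for cycle-count changes under right-multiplication by a transposition, which I would verify once at the outset by tracing the images of $k$ and $j$ before and after the swap.
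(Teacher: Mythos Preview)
Your proof is correct and follows essentially the same route as the paper's own argument: both observe that $k$ and $j=\gamma_{p,q}^{-1}\pi(k)$ lie in different cycles of $\gamma_{p,q}$ (so the transposition merges, giving $\#(\tilde\gamma)=1$) but in the same cycle of $\pi^{-1}\gamma_{p,q}$ (so the transposition splits, giving $\#(\pi^{-1}\tilde\gamma)=\#(\pi^{-1}\gamma_{p,q})+1$), and then invoke the defining equality for $S_{\NC}(p,q)$. Your version is simply a bit more explicit in verifying $(\pi^{-1}\gamma_{p,q})(j)=k$ directly, whereas the paper phrases the same fact via the inverse permutation $\gamma_{p,q}^{-1}\pi$.
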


\begin{proof}
Since $k$ and $\gamma_{p,q}^{-1}\pi(k)$ are in different
cycles of $\gamma_{p,q}$, $\tilde\gamma$ has only one
cycle. Also $\#(\pi) + \#(\pi^{-1}\tilde\gamma ) = \#(\pi) +
\#(\pi^{-1}\gamma_{p,q}) + 1 = p + q + 1$, since $k$ and
$\gamma_{p,q}^{-1}\pi(k)$ are in the same cycle of
$\gamma_{p,q}^{-1}\pi$. Thus $|\pi| + |\pi^{-1}\tilde\gamma|
= |\tilde\gamma|$
\end{proof}

\begin{remark}\label{remark:through-cycles}
Let $\pi$ and $k$ be as in Lemma \ref{lemma:unfolding}. Then
we can write $\tilde\gamma$ as the cycle
\[
(1,2,3, \dots, k, \pi(k), \gamma_{p,q}\pi(k), \dots, p + q,
p+1, \dots, \gamma_{p,q}^{-1} \pi(k), \gamma_{p,q}(k),
\dots, p).
\]
Suppose $c$ is a non-through cycle of $\pi$, say $c \subset
[p]$. Write $c = (i_1, i_2, \dots, \ab i_k)$. Since $c$ does
not meet $[p+1, p + q]$, at most one of the gaps $[i_1 +1,
  i_2-1]$, \dots, $[i_k+1, i_1-1]$ can contain a point on a
through block.
\end{remark}

\begin{lemma}\label{lemma:restriction}
Let $\pi \in S_\NC(p,q)$ and $\bar \pi$ be the partition of
$[p]$ obtained as follows. Each cycle of $\pi$ contained in
$[p]$ becomes a block of $\bar \pi$. All the remaining
points form one more block. Then $\bar \pi$ is non-crossing.
\end{lemma}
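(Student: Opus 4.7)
The plan is to reduce the annular situation to a disc non-crossing partition via Lemma \ref{lemma:unfolding}, and then to argue that $\bar\pi$ is obtained from that disc partition by first merging a family of blocks in a non-crossing-preserving way, and then restricting to a contiguous arc.

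Since $\pi \in S_{NC}(p,q)$ has at least one through cycle, I would pick $k \in [p]$ with $\pi(k) \in [p+1,p+q]$ and form $\tilde\gamma = \gamma_{p,q}(k,\gamma_{p,q}^{-1}\pi(k))$ as in Lemma \ref{lemma:unfolding}. Then $\tilde\gamma$ is a single cycle on $[p+q]$ and $\pi$ becomes a non-crossing partition of $[p+q]$ in the disc sense with respect to $\tilde\gamma$. By the explicit cyclic order in Remark \ref{remark:through-cycles}, the set $[p]$ is a contiguous arc of $\tilde\gamma$ whose induced cyclic order coincides with that of $\gamma_p$. In this disc picture the through cycles of $\pi$ are exactly the blocks that straddle the boundary between the arcs $[p]$ and $[p+1,p+q]$; every other cycle lies entirely inside one of the two arcs.

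The key step, and the main obstacle, is to form $\pi'$ from $\pi$ by merging all through cycles $C_1,\dots,C_r$ into a single block $B = C_1 \cup \cdots \cup C_r$ and to show that $\pi'$ remains non-crossing with respect to $\tilde\gamma$; merging blocks of a non-crossing partition can in general destroy non-crossingness, so one must exploit the specific structure here. The essential point is that every through cycle $C_j$ has points in both arcs $[p]$ and $[p+1,p+q]$: if a non-through block $D \subset [p]$ had two elements $d_1, d_2$ separated by some element of $C_j$ along the $[p]$-arc, then because $C_j$ also has nonempty intersection with $[p+1,p+q]$, which separates $d_1$ from $d_2$ along the other cyclic direction through $\tilde\gamma$, the elements $d_1$ and $d_2$ would lie in different arcs cut out by $C_j$, contradicting that $D$ and $C_j$ are non-crossing in $\pi$. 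Hence no element of $B$ lies strictly between two elements of $D$ along the $[p]$-arc, so $D$ is contained in a single arc cut out by $B$; by symmetry the same holds for non-through blocks in $[p+1,p+q]$, and pairs of non-through blocks are not affected by the merge, so $\pi'$ is non-crossing.

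Finally, I would restrict $\pi'$ to the contiguous arc $[p]$. Restricting a disc non-crossing partition to a contiguous arc preserves non-crossingness for the induced cyclic order, so the result is a non-crossing partition of $[p]$ with respect to $\gamma_p$. Its blocks are precisely the non-through cycles of $\pi$ contained in $[p]$ together with $B \cap [p] = T$, and this is exactly $\bar\pi$; therefore $\bar\pi$ is non-crossing on $[p]$, as claimed.
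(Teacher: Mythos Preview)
Your argument is correct and follows the same three-step outline as the paper: unfold via Lemma~\ref{lemma:unfolding}, merge the through cycles, then restrict to the arc $[p]$. The only difference is in how the merging step is justified. You argue by hand that the merged block $B$ cannot cross any non-through block $D$, using that each through cycle $C_j$ meets $[p+1,p+q]$. The paper instead introduces the auxiliary partition $\sigma$ consisting of singletons on $[p]$ together with the single interval block $[p+1,p+q]$; since $[p+1,p+q]$ is a contiguous arc of $\tilde\gamma$, $\sigma$ is trivially non-crossing, and then $\pi \vee \sigma$ is non-crossing because $NC$ (with respect to $\tilde\gamma$) is a lattice. One then checks directly that $(\pi \vee \sigma)|_{[p]} = \bar\pi$. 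The lattice argument is shorter and avoids the case analysis, but your direct argument has the mild advantage of not invoking the lattice structure of $NC$.
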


\begin{proof}
Let $k$ be such that $k \in [p]$ and $\pi(k) \in [p+1,
  p+q]$. Let $\tilde\gamma = \gamma_{p,q}(k,
\gamma_{p,q}^{-1}\pi(k))$. Then by Lemma
\ref{lemma:unfolding}, $\pi$ is non-crossing with respect to
$\tilde\gamma$. So also is $\sigma$, the partition which is
all singletons except for the block $(\pi(k),
\gamma_{p,q}\pi(k), \dots, \gamma_{p,q}^{-1}\pi(k))$, i.e.
\[
\sigma = \{\{1\},\{2\},\dots, \{k\}, \{\pi(k),\dots,
\gamma_{p,q}^{-1}\pi(k)\}, \{\gamma_{p,q}(k)\}, \dots, \{p\}\}.
\] 
Hence, $\pi \vee \sigma$ is also non-crossing, and thus so
is $\bar \pi = (\pi \vee \sigma)|_{[p]}$.
\end{proof}

\begin{lemma}\label{lemma:parity-preserving}
Suppose $\pi \in S_\NC(p,q)$ with $p$ and $q$ even, has all
cycles of even length. Then any cycle of $\pi$ which is
contained in one of the two cycles of $\gamma_{p,q}$
alternates between even and odd elements. Moreover, if there
is $k$ such that $k$ and $\pi(k)$ lie in different cycles of
$\gamma_{p,q}$ and have opposite parities, then all cycles
of $\pi$ alternate between even and odd elements.

\end{lemma}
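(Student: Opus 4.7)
The plan is to prove the two claims of the lemma separately: the first (non-through cycles alternate) using the restriction Lemma \ref{lemma:restriction}, and the second (all cycles alternate under the parity-reversing hypothesis) using the unfolding Lemma \ref{lemma:unfolding}.

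For the first claim, fix a non-through cycle $c = (i_1 < i_2 < \cdots < i_{2l})$ of $\pi$, say with $c \subset [p]$; the case $c \subset [p+1,p+q]$ is symmetric. By Lemma \ref{lemma:restriction}, $\bar\pi$ is a non-crossing partition of $[p]$ whose blocks are the non-through cycles of $\pi$ in $[p]$ (all of even size, since $\pi$ is even) together with one extra block $B$ comprising the elements of $[p]$ that lie on through cycles. Because $\bar\pi$ is non-crossing, the block $B$ is either an interval disjoint from $c$'s convex hull, nested inside exactly one gap $(i_{j_0},i_{j_0+1})$ of $c$, or $c$ is nested inside a gap of $B$. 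In the disjoint-interval and $c$-nested-in-$B$ cases, every linear gap $(i_j,i_{j+1})$ of $c$ is filled only with even blocks, hence has even total size, giving $i_{j+1}-i_j$ odd and consecutive elements of $c$ of opposite parities. In the remaining case, one exceptional gap has size $|B|$ plus an even contribution; the identity $|B| = p - \sum(\text{sizes of non-through blocks in }[p])$ forces $|B| \equiv p \pmod 2$, and the paper's ambient assumption that $p$ is even makes this gap even as well, so $c$ alternates.

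For the second claim, given the parity-reversing $k$, say with $k \in [p]$ and $\pi(k) \in [p+1,p+q]$, apply Lemma \ref{lemma:unfolding} to produce $\tilde\gamma = \gamma_{p,q}(k, \gamma_{p,q}^{-1}\pi(k))$, a single cycle of length $p+q$ with respect to which $\pi$ is non-crossing. Since $\pi$ has only even cycles, the classical result for even non-crossing partitions on a linear order implies that each block of $\pi$ alternates in the parity of \emph{position} along $\tilde\gamma$. To convert this into alternation in \emph{label} parity, I decompose the cyclic order of $\tilde\gamma$ into the four consecutive segments $[1,k]$, $[\pi(k),p+q]$, $[p+1,\pi(k)-1]$, $[k+1,p]$ and compute the position-to-label parity shift on each: it is locally constant and determined modulo $2$ by $k - \pi(k)$ and by $q$. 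The parity-reversing hypothesis $k \not\equiv \pi(k) \pmod 2$, together with the paper's ambient assumption that $q$ is even, forces all four shifts to coincide up to a single global constant, so alternation in position is equivalent to alternation in label, and every cycle of $\pi$ alternates.

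The main technical obstacle is the modular bookkeeping across the four unfolded segments in the second part: one has to compute each local position-label shift from the explicit positions produced by $\tilde\gamma$ and verify that the parity-reversing hypothesis is exactly what aligns them into a globally consistent parity labelling, paying particular attention to the boundary elements $k$, $\pi(k)$, $k+1$, and $p$ where the segments meet and a $\pi$-cycle can cross from one segment to another.
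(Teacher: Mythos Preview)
Your route is genuinely different from the paper's. The paper proves part~1 by a direct gap analysis on a fixed non-through cycle $c$, using Remark~\ref{remark:through-cycles} (at most one gap of $c$ can meet a through cycle) together with the even length of $c$; for part~2 it reuses that within-circle gap argument and then invokes the crossing hypothesis. You instead route part~1 through the restricted partition $\bar\pi$ of Lemma~\ref{lemma:restriction}, and part~2 through the unfolding $\tilde\gamma$ of Lemma~\ref{lemma:unfolding}, reducing to the classical disc case and then reconciling position-parity with label-parity across four segments. Both strategies are reasonable; yours is more structural, the paper's more hands-on.

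There is, however, a genuine gap: you explicitly invoke ``the paper's ambient assumption that $p$ is even'' (and likewise for $q$), but the lemma as stated carries no such hypothesis, and without it the statement is in fact \emph{false}. For part~1 take $p=q=3$ and $\pi=(1,3)(2,4,5,6)\in S_{NC}(3,3)$: all cycles are even, yet the non-through cycle $(1,3)$ does not alternate. For part~2 take $p=3$, $q=1$ and $\pi=(1,4,2,3)\in S_{NC}(3,1)$: the unique even cycle has the crossing $1\mapsto 4$ of opposite parity, but $4\mapsto 2$ is even-to-even. The paper's own proof is affected by exactly the same issue: the step ``the cyclic interval has even size, hence $i_l$ and $i_{l+1}$ have opposite parities'' is only valid for a gap that does not wrap around the circle, or when the circle has even length. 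In practice the lemma is only ever applied to $S_{NC}(2p,2q)$, so your instinct to add the evenness assumption is correct; but you should state it as an explicit extra hypothesis rather than import it from context the lemma does not actually have.
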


\begin{proof}
Let $c = (i_1, \dots, i_{2k})$ be a cycle of $\pi$ which
lies in one cycle of $\gamma_{p,q}$, and $i_l$ and $i_{l+1}$
adjacent points in this cycle. Consider the cyclic interval
$[i_l + 1, i_{l +1}-1]$, see
\S\ref{subsection:non-crossing_annular_permutations} for the
definition. If no point of this cyclic interval lies in a
through cycle of $\pi$, then $\pi$ restricts to a partition
of this interval with all cycles even, thus there must be an
even number of elements in this interval, and hence $i_l$
and $i_{l+1}$ must have opposite parities. Now suppose that
there is an element of $[i_l+1, i_{l+1}-1]$ that is in a
through cycle of $\gamma_{p,q}\pi^{-1}$. Consider each of
the other gaps in $c$: $[i_{l+1}+1, i_{l+2}-1], \dots,
[i_{l-1}+1, i_l-1]$. By Remark \ref{remark:through-cycles}
none contains an element which is on a through cycle of
$\pi$. Thus $\pi$ restricts to a partition of each of these
intervals with each restriction having blocks of even
size. Thus each gap has an even number of elements. This
means $\pi$ alternated between even and odd numbers as we
move around a circle. By hypothesis $\pi$ alternates as we
cross over. Thus $\pi$ alternates between even and odd
numbers.
\end{proof}

\begin{lemma}\label{lemma:parity-reversing}
Suppose $\pi \in S_\NC(p,q)$ with $p$ and $q$ even, has all
cycles of even length. If there is $k$ such that $k$ and
$\pi(k)$ lie in different cycles of $\gamma_{p,q}$ and have
the same parity, then all through cycles of $\pi$ alternate
between even and odd elements except when they cross between
the cycles of $\gamma_{p,q}$, i.e. if $j$ and $\pi(j)$ are
in the same cycle of $\gamma_{p,q}$ then they have opposite
parities and if they are in different cycles of
$\gamma_{p,q}$ they have the same parity.
\end{lemma}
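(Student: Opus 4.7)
The proof will parallel that of Lemma~\ref{lemma:parity-preserving}, with the given same-parity crossing playing the role that the opposite-parity crossing played there. The main tool will be the unfolding construction of Lemma~\ref{lemma:unfolding}, which converts the annular non-crossing analysis into a disc non-crossing analysis where the standard even-partition argument applies.

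I would begin by choosing $k$ as in the hypothesis and applying Lemma~\ref{lemma:unfolding} to produce the single cycle $\tilde\gamma = \gamma_{p,q}(k, \gamma_{p,q}^{-1}\pi(k))$ on $[p+q]$, with respect to which $\pi$ is non-crossing. Relabeling by $f(a_i) = i$, where $\tilde\gamma = (a_1, a_2, \dots, a_{p+q})$, transforms $\pi$ into a non-crossing partition of $[p+q]$ whose cycles all have even length. The argument from the proof of Theorem~\ref{theorem:ns1} then shows that consecutive elements $j$ and $\pi(j)$ in any cycle of $\pi$ have opposite \emph{position} parity, i.e.\ $f(j) \not\equiv f(\pi(j)) \pmod 2$.

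To convert this into a \emph{label} parity statement, I would introduce the parity shift $\varepsilon(j) := f(j) - j \pmod 2$ and compute it on the four sub-intervals $[1,k]$, $[k+1,p]$, $[p+1,\pi(k)-1]$, $[\pi(k), p+q]$ produced by the $\tilde\gamma$-ordering. Using the hypothesis that $k$ and $\pi(k)$ have the same parity, one checks that $\varepsilon$ takes exactly two values and that each value is constant on the appropriate ``half'' determined by the cut $k \to \pi(k)$. The position-parity alternation then translates to: $j$ and $\pi(j)$ have opposite label parity iff $\varepsilon(j) = \varepsilon(\pi(j))$, and the same label parity iff $\varepsilon(j) \neq \varepsilon(\pi(j))$.

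The last and most delicate step, which I expect to be the main obstacle, is to identify the two halves with the two cycles of $\gamma_{p,q}$, so that within-circle edges lie inside one half and cross-circle edges go between the halves. For the through cycle of $\pi$ containing $k$ this is built into the construction, since $k \to \pi(k)$ is itself the cut. For the remaining through cycles I would invoke the non-crossing annular structure of $\pi$ (cf.\ Lemma~\ref{lemma:restriction} and Remark~\ref{remark:through-cycles}): the restrictions of distinct through cycles to each of the two circles lie in non-interleaving arcs, and this rigidity prevents an edge of another through cycle from straddling the cut $\{k,\pi(k)\}$ on either circle. This forces every within-circle edge of a through cycle to remain in a single half (giving opposite label parity) and every cross-circle edge to pass between the two halves (giving the same label parity), as required.
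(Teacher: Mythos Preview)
Your unfolding approach is valid but considerably more involved than the paper's. The paper reduces directly to Lemma~\ref{lemma:parity-preserving} by a conjugation trick: set $\tilde\pi = \gamma_p\,\pi\,\gamma_p^{-1}$, where $\gamma_p = (1,2,\dots,p)$ is the cyclic shift on the outer circle. This shift adds $1$ to every label in $[p]$ and fixes $[p+1,p+q]$, hence flips parity on exactly one circle; the hypothesised same-parity crossing for $\pi$ becomes an opposite-parity crossing for $\tilde\pi$, and Lemma~\ref{lemma:parity-preserving} gives full alternation for $\tilde\pi$. Undoing the parity flip on $[p]$ then reads off the stated behaviour for $\pi$. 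The whole argument is three or four lines.

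Your route reaches the same conclusion, but the step you flag as the ``main obstacle'' dissolves once you actually compute $\varepsilon(j)=f(j)-j\bmod 2$ on the four sub-intervals of $\tilde\gamma$. Using $k\equiv\pi(k)\bmod 2$ together with $p,q$ even (the lemma is only ever applied in the paper to $S_{\NC}(2p,2q)$, and in fact both the statement and the paper's own proof tacitly need this---for $p=q=3$ the permutation $(1,2,4,5)(3,6)$ is a counterexample), you get $\varepsilon\equiv 0$ on all of $[p]$ and $\varepsilon\equiv 1$ on all of $[p+1,p+q]$. So the two ``halves'' are already the two circles of $\gamma_{p,q}$, and no separate non-crossing argument about other through cycles straddling the cut is needed. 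In effect your unfolding is carrying out the same one-circle parity flip that the paper achieves by conjugation, only with more bookkeeping.
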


\begin{proof}
Suppose $k \in [p]$ and $\pi(k)$ lie in different cycles
of $\gamma_{p,q}$ and have the same parity. Let $\gamma_p$ 
be the permutation in $S_{p+q}$ which in
cycle notation is $(1,2,3, \dots, p)$, let $\tilde\pi =
\gamma_p \pi \gamma_p^{-1}$, and let $\tilde k =
\gamma_p(k)$. Then all the cycles of $\tilde \pi$ have even
length, and $\tilde k$ and $\tilde\pi(\tilde k)$ have
opposite parities and lie in different cycles of
$\gamma_{p,q}$. So by Lemma \ref{lemma:parity-preserving}
all cycles of $\tilde\pi$ alternate between odd and even
elements.

If $j$ and $\pi(j)$ are in the same cycle of
$\gamma_{p,q}$ they have opposite parities, as either they
are in $[p+1, \dots, p+q]$ and $\tilde\pi(j) = \pi(j)$, or
they are both in $[p]$ and if we let $l = \gamma_p(j)$ then
$l$ and $\tilde\pi(l) = \gamma_p (\pi(j))$ have opposite
parities and thus $j = \gamma^{-1}_p(l)$ and $\pi(j) =
\gamma_p^{-1}(\tilde\pi(l))$ have opposite parities.

Alternatively, suppose that $j$ and $\pi(j)$ are in different
cycles of $\gamma_{p,q}$. First suppose that $k \in [j]$ and
$\pi(j) \in [p+1, p+q]$. Let $l = \gamma_p(j)$. Then $l$ and
$\tilde\pi(l) = \pi(j)$ have opposite parities, by Lemma
\ref{lemma:parity-preserving}. Thus $j$ and $\pi(j)$ have
the same parity. On the other hand, suppose that $\pi(j) \in
[p]$ and $j \in [p+1, p+q]$. Then $j$ and $\tilde\pi(j) =
\gamma_p(\pi(j))$ have opposite parities, and thus $j$ and
$\pi(j)$ have the same parity.
\end{proof}

\begin{definition}\label{def:reversing}
Suppose $\pi \in S_\NC(2p, 2q)$ is even.  We say that $\pi$
is \textit{parity reversing} if for all $k$, $\pi(k)$ and
$k$ have the opposite parity. We denote the elements of
$S_\NC(2p, 2q)$ that have cycles of even length and that are
parity reversing by $S_\NC^-(2p, 2q)$. The remaining even
elements of $S_\NC(2p, 2q)$, we call \textit{parity
  preserving} because for any $k$ such that $k$ and $\pi(k)$
are in different cycles of $\gamma_{2p, 2q}$ we have that
$k$ and $\pi(k)$ have the same parity, c.f. Lemma
\ref{lemma:parity-reversing}. We denote the parity
preserving elements by $S_\NC^+(2p, 2q)$. See Figure 
\ref{fig:parity_reversing}. 
\end{definition}

\begin{remark}\label{remark:no-pairings}
If $\pi \in S_\NC(2p,2q)$ and $\gamma_{2p, 2q}\pi^{-1}$
separates the points of $O$ (c.f. Remark
\ref{rem:separates_points-2}) then we cannot have $\pi(2k) =
2l-1$ with $2k$ and $2l-1$ on different circles. For if
$\pi(2k) = 2l-1$ then we would have both $\gamma_{2p,
  2q}(2k) \not= 2l-1$, because they are on different
circles, and $\gamma_{2p, 2q} \pi^{-1} ( 2l -1) =
\gamma_{2p, 2q}(2k)$; since $\gamma_{2p,2q}(2k) \in O$, this
is contrary to our assumption about $\gamma_{2p,
  2q}\pi^{-1}$ separating the points of $O$. Thus either
$\pi(2k)$ is on the same circle as $2k$ or $\pi(2k) =
2l$. We have shown that if $l$ and $\pi(l)$ are on different
circles then either both are even or $l$ is odd. This means
that there are no \textit{pairings} in $S_\NC^-(2p, 2q)$
such that $\gamma_{2p,2q}\pi^{-1}$ separates the points of
$O$. See Figure \ref{fig:pairing_in_parity_reversing}.
\end{remark}

\begin{figure}
  \begin{minipage}[c]{0.4\textwidth}
    \includegraphics{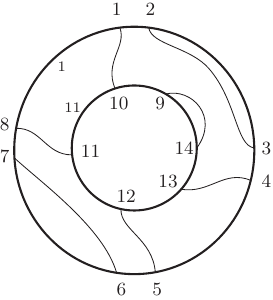}
  \end{minipage}\kern-3em
  \begin{minipage}[c]{0.7\textwidth}
    \caption{\small A pairing in $S_\NC^-(8,6)$. Note that whenever
  there is a through string, (1, 10) in this example, we
  must have two elements of $O$, $\{1,11\}$ in this example,
  in the same cycle of $\gamma_{2p,2q}\pi^{-1}$. See Remark
  \ref{remark:no-pairings}.\label{fig:pairing_in_parity_reversing}}
  \end{minipage}
\end{figure}

\begin{lemma}\label{lemma:singletons}
Suppose $\pi \in S_\NC^-(2p, 2q)$. If $\gamma_{2p,2q}
\pi^{-1}$ separates the points of $O$ then, for all $k$,
$\gamma_{2p,2q} \pi^{-1}(2k-1) = 2k-1$, or equivalently for
all $k$, $\pi(2k) = \gamma_{2p,2q}(2k)$.

\end{lemma}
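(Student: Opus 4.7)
The plan is to reduce the lemma to the following clean observation: under the parity-reversing hypothesis the permutation $K := \gamma_{2p,2q}\pi^{-1}$ actually preserves the set $O$ of odd elements (in the set-theoretic sense, not merely the weaker induced-permutation sense), and then the separation hypothesis forces $K$ to fix every element of $O$ pointwise.

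For the preservation step I would argue as follows. Since $\pi$ is parity reversing, so is $\pi^{-1}$; hence $\pi^{-1}$ maps $O$ into the set $E = \{2,4,\dots,2p+2q\}$ of even elements. Next, direct inspection of $\gamma_{2p,2q} = (1,2,\dots,2p)(2p+1,\dots,2p+2q)$ shows that $\gamma_{2p,2q}$ sends every even element to an odd one: on the first circle $2j \mapsto 2j+1$ for $j<p$ and $2p \mapsto 1$, and analogously on the second circle. Composing, $K$ sends $O$ into $O$, and since $K$ is a permutation of $[2p+2q]$, $K(O)=O$.

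Once $K$ set-theoretically preserves $O$, the induced permutation $K|_O$ of Definition \ref{def:separates_points} is simply the literal restriction of $K$ to $O$. The hypothesis that $K$ separates the points of $O$ then reads $K|_O = \mathit{id}_O$, i.e.\ $\gamma_{2p,2q}\pi^{-1}(2k-1) = 2k-1$ for every $k$, which is the first formulation. To obtain the equivalent second formulation, rewrite this as $\pi^{-1}(2k-1) = \gamma_{2p,2q}^{-1}(2k-1)$; since $\gamma_{2p,2q}^{-1}$ restricts to a bijection from $O$ onto $E$ (the inverse of the parity map used above), letting $2j = \gamma_{2p,2q}^{-1}(2k-1)$ and running $k$ through $[p+q]$ gives precisely the statement $\pi(2j) = \gamma_{2p,2q}(2j)$ for all $j$.

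There is essentially no obstacle beyond noticing the key combinatorial point that $\gamma_{2p,2q}$ maps even to odd on each of its two cycles. This, together with parity reversal, is what promotes the hypothesis ``no two points of $O$ are in the same cycle of $K$'' (a priori a statement about cycles that could pass through many even elements before returning to $O$) into the pointwise conclusion $K(2k-1)=2k-1$.
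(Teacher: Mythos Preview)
Your proof is correct and follows essentially the same approach as the paper's own argument: both use that parity reversal of $\pi$ (hence of $\pi^{-1}$) together with the observation that $\gamma_{2p,2q}$ sends even elements to odd ones forces $K=\gamma_{2p,2q}\pi^{-1}$ to map $O$ into $O$, after which the separation hypothesis yields $K(2k-1)=2k-1$. Your write-up is more explicit about the intermediate step that $K$ set-theoretically preserves $O$ (so that the induced permutation is the literal restriction), but the underlying idea is identical.
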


\begin{proof}
Recall that for $\pi\in S_{NC}^-(2p, 2q)$, $\pi(2k-1)$ is
even, for all $k$. Thus $\gamma_{2p, 2q}\pi^{-1}(2k-1)$ is
odd for all $k$. Since $\gamma_{2p, 2q}\pi^{-1}$ separates
the points of $O$, we must have that $\gamma_{2p,
  2q}\pi^{-1}(2k-1) = 2k -1$, for all $k$. Let $2l =
\gamma_{2p,2q}^{-1}(2k-1)$ then $\pi(2l) = 2k-1 =
\gamma_{2p,2q}(2l)$.
\end{proof}

\begin{definition}
Suppose $\pi \in S_\NC^-(2p, 2q)$ is such that
$\gamma_{2p,2q}\pi^{-1}$ separates the points of $O$. Let
$\check\pi$ be the permutation defined by $2 \check\pi(k) =
\pi^2(2k)$, for $1 \leq k \leq p+q$.
\end{definition}

\begin{lemma}\label{lemma:orbits}
Suppose $\pi\in S_\NC^-(2p,2q)$ is such that
$\gamma_{2p,2q}\pi^{-1}$ separates the points of $O$. Then
$\gamma_{p,q}\check\pi^{-1}(k) = l$ if and only if
$\gamma_{2p,2q} \pi^{-1}(2k) = 2l$.
\end{lemma}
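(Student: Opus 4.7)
The plan is to use Lemma \ref{lemma:singletons} as the backbone: it tells us that $\pi$ acts on even indices exactly like $\gamma_{2p,2q}$, i.e.\ $\pi(2m)=\gamma_{2p,2q}(2m)$ for all $m\in[p+q]$. So the action of $\pi^2$ on evens can be decoded entirely through $\gamma_{2p,2q}$ on odds, which is where $\gamma_{p,q}$ will naturally enter.

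First I would record a parity lemma for $\gamma_{2p,2q}$: it interchanges the odd and even elements of $[2p+2q]$, and one checks case by case (including the wrap-arounds $\gamma_{2p,2q}(2p)=1$ and $\gamma_{2p,2q}(2p+2q)=2p+1$) that
\[
\gamma_{2p,2q}^{-1}(2l) \;=\; 2l-1, \qquad
\gamma_{2p,2q}^{-1}(2l-1) \;=\; 2\,\gamma_{p,q}^{-1}(l),
\]
for every $l\in[p+q]$. This is the single combinatorial identity that links the two circles on the doubled annulus to the single-step permutation $\gamma_{p,q}$ on the halved annulus.

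Next I would chase the definitions. Start from $\gamma_{2p,2q}\pi^{-1}(2k)=2l$. Applying the first identity gives $\pi^{-1}(2k)=2l-1$, i.e.\ $\pi(2l-1)=2k$. To compose this with another copy of $\pi$ and thereby activate $\check\pi$, I look for $j$ with $\pi(2j)=2l-1$. By Lemma \ref{lemma:singletons} this forces $\gamma_{2p,2q}(2j)=2l-1$, and the second identity above yields $2j=\gamma_{2p,2q}^{-1}(2l-1)=2\,\gamma_{p,q}^{-1}(l)$, so $j=\gamma_{p,q}^{-1}(l)$. Then
\[
\pi^{2}(2j) \;=\; \pi(2l-1) \;=\; 2k,
\]
which by the very definition $2\check\pi(j)=\pi^{2}(2j)$ says $\check\pi(j)=k$, i.e.\ $\check\pi^{-1}(k)=j=\gamma_{p,q}^{-1}(l)$. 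Applying $\gamma_{p,q}$ gives $\gamma_{p,q}\check\pi^{-1}(k)=l$, which is the desired forward implication.

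For the reverse implication I would simply run the same chain backwards: given $\gamma_{p,q}\check\pi^{-1}(k)=l$, set $j=\gamma_{p,q}^{-1}(l)=\check\pi^{-1}(k)$, so $\pi^2(2j)=2k$ and $\pi(2j)=\gamma_{2p,2q}(2j)=\gamma_{2p,2q}(2\gamma_{p,q}^{-1}(l))=2l-1$ by the identity again, hence $\pi(2l-1)=2k$ and $\gamma_{2p,2q}\pi^{-1}(2k)=\gamma_{2p,2q}(2l-1+\text{(wait)})$—actually more cleanly, since both $k\mapsto \tfrac{1}{2}\gamma_{2p,2q}\pi^{-1}(2k)$ and $\gamma_{p,q}\check\pi^{-1}$ are well-defined maps $[p+q]\to[p+q]$ (the image is always even by the parity lemma plus $\pi^{-1}(\text{even})=\text{odd}$), the forward implication already shows the two functions agree, so the $\Leftarrow$ direction is automatic. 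The only real subtlety I expect is keeping track of the wrap-around cases $l\in\{1,p+1\}$, but these are handled uniformly by the identity $\gamma_{2p,2q}^{-1}(2l-1)=2\gamma_{p,q}^{-1}(l)$ verified at the start.
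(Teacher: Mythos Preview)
Your proposal is correct and follows essentially the same approach as the paper: both rely on Lemma~\ref{lemma:singletons} to get $\pi(2j)=\gamma_{2p,2q}(2j)$ and on the identity $\gamma_{2p,2q}(2\gamma_{p,q}^{-1}(l))=2l-1$ (equivalently $\gamma_{2p,2q}^{-1}(2l)=\pi(2\gamma_{p,q}^{-1}(l))$), then chase through the definition of $\check\pi$. The paper packages this as a single chain of biconditionals rather than doing one direction and invoking function equality for the other, but the content is identical.
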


\begin{proof}
By Lemma \ref{lemma:singletons}, for all $1 \leq k \leq p +
q$, $\pi(2k) = \gamma_{2p,2q}(2k)$. Thus for any $1 \leq k
\leq p+q$, $\pi(2 \gamma_{p,q}^{-1}(k)) = \gamma_{2p,2q}(2
\gamma_{p,q}^{-1}(k)) = 2k-1 = \gamma_{2p,2q}^{-1}(2k)$. Hence
{\allowdisplaybreaks\begin{align*}
\gamma_{p,q}\check\pi^{-1}(k) &= l 
\Leftrightarrow
\check\pi(\gamma_{p,q}^{-1}(l)) = k
\Leftrightarrow
2\check\pi(\gamma_{p,q}^{-1}(l)) = 2k \\
\Leftrightarrow
\pi^2(2\gamma_{p,q}^{-1}(l)) &= 2k 
\Leftrightarrow
\pi(\gamma_{2p,2q}^{-1}(2l)) = 2k 
\Leftrightarrow
\gamma_{2p,2q}\pi^{-1}(2k) = 2l.
\end{align*}}
\end{proof}

\begin{lemma}
Suppose $\pi\in S_\NC^-(2p,2q)$ is such that
$\gamma_{2p,2q}\pi^{-1}$ separates the points of $O$. Then
$\check\pi \in S_\NC(p,q)$
\end{lemma}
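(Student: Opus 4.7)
The plan is to verify directly the two defining properties of $S_{NC}(p,q)$ for $\check\pi$: that $\check\pi$ possesses a through cycle, and that the length identity $\#(\check\pi) + \#(\gamma_{p,q}\check\pi^{-1}) = p+q$ holds. The proof will exploit Lemmas \ref{lemma:singletons} and \ref{lemma:orbits}, which transport information about $\pi$ on the double annulus to information about $\check\pi$ on the annulus.

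First I would count the cycles of $\check\pi$. Since $\pi\in S_{NC}^-(2p,2q)$ has all cycles of even length and is parity reversing, each cycle $(a_1,a_2,\dots,a_{2m})$ of $\pi$ alternates strictly between even and odd labels. Consequently $\pi^2$ splits such a cycle into two orbits of length $m$, one entirely on even labels and the other entirely on odd labels. Restricting $\pi^2$ to $N=\{2,4,\dots,2(p+q)\}$ and using the bijection $k\leftrightarrow 2k$ that defines $\check\pi$, I obtain exactly one cycle of $\check\pi$ per cycle of $\pi$, hence $\#(\check\pi)=\#(\pi)$.

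Next I would count the cycles of $\gamma_{p,q}\check\pi^{-1}$. By Lemma \ref{lemma:singletons}, every element of $O$ is a fixed point of $\gamma_{2p,2q}\pi^{-1}$, and by Lemma \ref{lemma:orbits} the orbits of $\gamma_{2p,2q}\pi^{-1}$ on the remaining set $N$ correspond bijectively (via $2k\leftrightarrow k$) to the orbits of $\gamma_{p,q}\check\pi^{-1}$. Therefore
\[
\#(\gamma_{2p,2q}\pi^{-1}) = (p+q) + \#(\gamma_{p,q}\check\pi^{-1}).
\]
Combining this with the first count and the hypothesis $\pi\in S_{NC}(2p,2q)$, which gives $\#(\pi)+\#(\gamma_{2p,2q}\pi^{-1})=2p+2q$, yields
\[
\#(\check\pi)+\#(\gamma_{p,q}\check\pi^{-1}) = \#(\pi)+\#(\gamma_{2p,2q}\pi^{-1}) - (p+q) = p+q,
\]
equivalently $|\check\pi|+|\check\pi^{-1}\gamma_{p,q}| = |\gamma_{p,q}|+2$.

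Finally I would exhibit a through cycle of $\check\pi$. Since $\pi \in S_{NC}(2p,2q)$ it has a through cycle $c$. By Lemma \ref{lemma:singletons}, $\pi(2k)=\gamma_{2p,2q}(2k)$, so $\pi$ keeps every even label on its own circle of $\gamma_{2p,2q}$; the only way $c$ can cross circles is at an odd-to-even step. Because $c$ does cross, its even elements must occupy both circles of $\gamma_{2p,2q}$, and the corresponding $\pi^2$-orbit, pulled back to $[p+q]$ via $2k\mapsto k$, is a cycle of $\check\pi$ meeting both cycles of $\gamma_{p,q}$. The main obstacle is precisely this last step---transferring the annular (through-cycle) feature of $\pi$ to $\check\pi$---and it is handled cleanly by Lemma \ref{lemma:singletons}, which forbids circle-crossings at even labels and makes the descent faithful.
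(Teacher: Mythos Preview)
Your proof is correct and follows essentially the same approach as the paper: both establish $\#(\check\pi)=\#(\pi)$, use Lemma~\ref{lemma:orbits} to get $\#(\gamma_{2p,2q}\pi^{-1})=(p+q)+\#(\gamma_{p,q}\check\pi^{-1})$, and then deduce the length identity. For the through-cycle step the paper invokes Remark~\ref{remark:no-pairings} to rule out a two-element through cycle and then asserts that $\check\pi$ inherits a through block, whereas you argue directly from Lemma~\ref{lemma:singletons} that every even label is sent by $\pi$ to an odd label on the same circle, so a through cycle of $\pi$ must carry even labels on both circles; your version makes the descent explicit but the content is the same.
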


\begin{proof}
We have $\#(\check\pi) = \#(\pi)$. The cycles of
$\gamma_{2p,2q} \pi^{-1}$ are the singletons $(2k-1)$, of
which there are $p+q$, and the cycles consisting of even
numbers. But by Lemma \ref{lemma:orbits} the orbits of even
numbers correspond to the orbits of $\gamma_{p,q}
\check\pi^{-1}$. Thus $\#(\gamma_{2p,2q} \pi^{-1}) = p + q +
\#(\gamma_{p,q} \check\pi^{-1})$. Hence
\[
\#(\check\pi) + \#(\gamma_{p,q} \check\pi^{-1}) = \#(\pi) +
\#(\gamma_{p,q} \pi^{-1}) - (p + q) = p + q.
\]
By Remark \ref{remark:no-pairings}, $\pi$ has a through
block with more than two elements, so $\check\pi$ will have a
through block, and thus $\check\pi \in S_\NC(p,q)$.
\end{proof}

\begin{figure}
\setbox1=\hbox{\includegraphics{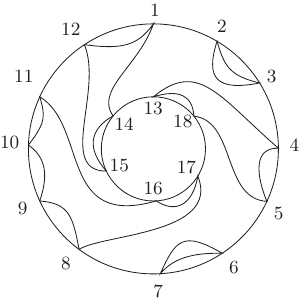}}
\setbox2=\hbox{\includegraphics{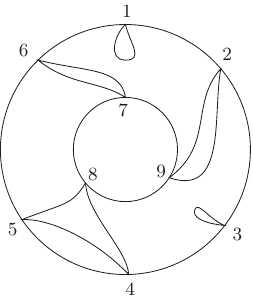}}
\hfill$\vcenter{\hsize=\wd1\box1}$
\hfill
$\vcenter{\hsize=\wd2\box2}$
\hfill{}
\caption{\small\label{fig8-9}\small On the left we see the
  permutation $\pi = (1,\ab 14, 15,12)(2,3)\ab(4,5,\ab18,\ab
  13)(6,7)(8,9,10,11,16,17)$. $\pi$ is in $S_{NC}^-(12,6)$
  and $\gamma_{12,6}\pi^{-1}$ separates the points of $O
  =\ab \{1,\ab 3,\ab 5,\ab 7,\ab 9,\ab 11,13,15,17\}$. On the right is
  $\check\pi = (1)\ab (2,9)\ab (3)\ab (4,5,8)\ab (6,7)$. Note that
  $\pi^2|_E = (2)\ab (4,18)\ab (6)\ab (8, \ab 10,\ab 16)\ab
  (12, 14)$ where $\pi^2|_E$ is the restriction of $\pi^2$
  to the even numbers.  }
\end{figure}

The next lemma will be crucial in proving Theorem \ref{MT2} 
on the determining sequences of a second order $R$-diagonal
element.

\begin{lemma} \label{lemma check} The map
$\pi \mapsto \check\pi$ is a bijection from $S_\NC^-(2p,2q)
  \cap \{\pi \mid \gamma_{2p, 2q}\pi^{-1}$ separates the
  point of $O\}$ to $S_\NC(p,q)$.

\end{lemma}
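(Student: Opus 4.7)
The plan is to establish the bijection in two steps: injectivity by observing that the hypotheses rigidly determine $\pi$, and surjectivity by writing down an explicit inverse map and then checking all the required properties.

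\textbf{Injectivity.} Suppose $\pi,\pi'$ both lie in the source set and $\check\pi=\check{\pi'}$. By Lemma \ref{lemma:singletons}, $\pi(2k)=\gamma_{2p,2q}(2k)=\pi'(2k)$ for every $k$, so $\pi$ and $\pi'$ already agree on the even integers. Every odd integer has the form $\gamma_{2p,2q}(2k)$ (since $\gamma_{2p,2q}$ restricts to a bijection between evens and odds), and on such an element
\[
\pi(\gamma_{2p,2q}(2k)) = \pi^{2}(2k) = 2\check\pi(k) = 2\check{\pi'}(k) = \pi'(\gamma_{2p,2q}(2k)),
\]
so $\pi=\pi'$.

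\textbf{Surjectivity.} Given $\sigma\in S_\NC(p,q)$, I define $\pi$ on $[2p+2q]$ by
\[
\pi(2k):=\gamma_{2p,2q}(2k),\qquad \pi(\gamma_{2p,2q}(2k)):=2\sigma(k)
\]
for $k\in[p+q]$. This is a well-defined bijection because $\gamma_{2p,2q}$ swaps evens and odds and $\sigma$ is a permutation of $[p+q]$. By construction $\pi$ sends evens to odds and odds to evens, so $\pi$ is parity reversing and every cycle has even length; also $\check\pi=\sigma$ directly from the definition. For the separation property, $\pi^{-1}(\gamma_{2p,2q}(2k))=2k$, so $\gamma_{2p,2q}\pi^{-1}$ fixes every odd number.

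It remains to verify $\pi\in S_\NC^-(2p,2q)$, i.e.\ the annular non-crossing count and the existence of a through cycle. Since the cycles of $\pi$ alternate evens and odds, the map $2k\mapsto k$ carries the even elements of each cycle of $\pi$ onto a cycle of $\check\pi=\sigma$, so $\#(\pi)=\#(\sigma)$. On the other side, $\gamma_{2p,2q}\pi^{-1}$ consists of the $p+q$ odd fixed points together with cycles on the even numbers, and by Lemma \ref{lemma:orbits} the map $2k\mapsto k$ identifies these even orbits with the cycles of $\gamma_{p,q}\sigma^{-1}$. Hence
\[
\#(\pi)+\#(\gamma_{2p,2q}\pi^{-1}) = \#(\sigma) + (p+q) + \#(\gamma_{p,q}\sigma^{-1}) = (p+q)+(p+q),
\]
using $\sigma\in S_\NC(p,q)$; this is the required equality. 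A through cycle of $\sigma$ contains elements of both $[p]$ and $[p+1,p+q]$, so its lift among the even elements of $\pi$ straddles both $[2p]$ and $[2p+1,2p+2q]$, yielding a through cycle of $\pi$.

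The main obstacle, and what makes the argument more than routine parity-chasing, is the cycle-count bookkeeping for $\gamma_{2p,2q}\pi^{-1}$: once one strips off the forced $p+q$ odd singletons from Lemma \ref{lemma:singletons}, the remaining orbits on the evens must be shown to correspond precisely to the cycles of $\gamma_{p,q}\sigma^{-1}$, which is exactly the content of Lemma \ref{lemma:orbits}.
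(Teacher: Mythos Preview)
Your proof is correct and follows essentially the same route as the paper's: injectivity via Lemma~\ref{lemma:singletons} plus the definition of $\check\pi$, and surjectivity via the explicit inverse $\sigma\mapsto\pi$ together with the cycle count from Lemma~\ref{lemma:orbits}. One small formal point: when you invoke Lemma~\ref{lemma:orbits} in the surjectivity step, its stated hypothesis $\pi\in S_\NC^-(2p,2q)$ has not yet been established; the paper is slightly more careful here, writing ``as in the proof of Lemma~\ref{lemma:orbits}'', since that computation really only needs $\pi(2k)=\gamma_{2p,2q}(2k)$, which you have by construction.
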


\begin{proof}
Suppose that $\pi_1$ and $\pi_2$ are in $S_\NC^-(2p,2q)\cap \{\pi
\mid \gamma_{2p, 2q}\pi^{-1} \sep\ab O \}$ and $\check\pi_1
= \check\pi_2$. (Here and below ``sep. $O$"" means ``separates the points of $O$''). Therefore $\pi_1^2(2k) = \pi_2^2(2k)$ for all
$k$. Since $\pi_i(2k) = \gamma_{2p,2q}(2k)$ for $i= 1,2$, we
only have to show that $\pi_1$ and $\pi_2$ agree on the odd
numbers. But $\pi_1(\gamma_{2p,2q}(2k)) = \pi_1^2(2k) =
\pi_2^2(2k) = \pi_2(\gamma_{2p,2q}(2k))$. Hence $\pi_1$ and
$\pi_2$ agree on the odd numbers as well. This proves
injectivity.

Let $\sigma \in S_\NC(p,q)$. Define $\pi$ by $\pi(2k) =
\gamma_{2p,2q}(2k)$ and $\pi(\gamma_{2p,2q}(2k)) = 2
\sigma(k)$. Then $\pi^2(2k) = 2\sigma(k)$. So if $(i_1,
\dots, i_k)$ is a cycle of $\sigma$, then the corresponding
cycle of $\pi$ is $(2i_1, \gamma_{2p, 2q}(2i_1), 2i_2,
\dots, 2i_k, \gamma_{2p, 2q}(2i_k))$. Thus $\pi$ is even and
by construction is a parity reversing permutation. Also
$\#(\pi) = \#(\sigma)$. Furthermore, we have that
$$\gamma_{2p,2q}(\pi^{-1}(2k-1)) = 2k-1.$$ Thus every odd
number is a singleton of $\gamma_{2p, 2q}\pi^{-1}$. As in the
proof of Lemma \ref{lemma:orbits} we have
$\gamma_{p,q}\sigma^{-1}(k) = l \Leftrightarrow
\gamma_{2p,2q}\pi^{-1}(2k) = 2l$, thus
\[
\#(\pi) + \#(\gamma_{2p,2q}\pi^{-1}) = \#(\sigma) +
\#(\gamma_{p, q}\sigma^{-1}) + p + q = 2p + 2 q.
\]
Thus $\pi \in S_\NC^-(2p, 2q)$. Since, by construction,
$\pi(2k) =\gamma_{2p,2q}(2k)$, we have $\gamma_{2p, 2q}
\pi^{-1}$ separates the points of $O$. Therefore $\pi \in
S_\NC^-(2p,2q)\cap \{\pi \mid \gamma_{2p, 2q}\pi^{-1}
\sep\ab O \}$ and $\check\pi = \sigma$.
\end{proof}

See Figure \ref{fig8-9} for an example of the relation
between $\pi$ and $\check\pi$.

\begin{lemma}\label{lemma:plus-singletons}
Let $\pi \in S_\NC^+(2p, 2q)$ be such that $\gamma_{2p,
  2q}\pi^{-1}$ separates the points of $O$. Suppose $k$ is
such that $2k$ and $\pi(2k)$ are in the same cycle of
$\gamma_{2p,2q}$. Then $\pi(2k) = \gamma_{2p,2q}(2k)$.
\end{lemma}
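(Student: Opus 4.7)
The plan is to chase the parity of the image of $2j-1$ under $\gamma_{2p,2q}\pi^{-1}$ and use the separation-of-points hypothesis at a single step, exactly as in the parity-reversing case (Lemma \ref{lemma:singletons}), but justifying the parity claim via Lemma \ref{lemma:parity-reversing} instead of the definition of $S_\NC^-$.

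First I would set $\pi(2k) = 2j-1$. This is legitimate because we are in the parity-preserving regime: since $\pi \in S_\NC^+(2p,2q)$ is even and the hypothesis places $2k$ and $\pi(2k)$ in the same cycle of $\gamma_{2p,2q}$, Lemma \ref{lemma:parity-reversing} tells us that $2k$ and $\pi(2k)$ have opposite parities, so $\pi(2k)$ is odd.

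Next I would observe that $\gamma_{2p,2q}(2k)$ is also odd, since each of the two cycles $(1,\dots,2p)$ and $(2p+1,\dots,2p+2q)$ of $\gamma_{2p,2q}$ sends every even element to an odd element (even $+1$ is odd, and the wrap-arounds $\gamma_{2p,2q}(2p)=1$ and $\gamma_{2p,2q}(2p+2q)=2p+1$ are odd too). Therefore $\gamma_{2p,2q}\pi^{-1}(2j-1) = \gamma_{2p,2q}(2k) \in O$.

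Finally I would invoke the hypothesis that $\gamma_{2p,2q}\pi^{-1}$ separates the points of $O$. Since $2j-1 \in O$ and its image under $\gamma_{2p,2q}\pi^{-1}$ already lies in $O$, the first-return description of the induced permutation forces $\gamma_{2p,2q}\pi^{-1}(2j-1) = 2j-1$ (otherwise the cycle of $\gamma_{2p,2q}\pi^{-1}$ through $2j-1$ would contain two distinct elements of $O$). Thus $\gamma_{2p,2q}(2k) = 2j-1 = \pi(2k)$, which is the conclusion. I do not expect an obstacle here; the only subtle point is making sure that the parity claim $\pi(2k)$ odd is delivered by Lemma \ref{lemma:parity-reversing} rather than by the definition of $S_\NC^-$ that was used in Lemma \ref{lemma:singletons}.
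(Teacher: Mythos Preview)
Your argument is correct and in fact more direct than the paper's. One minor point: Lemma~\ref{lemma:parity-reversing} only asserts the opposite-parity conclusion for \emph{through} cycles of $\pi$; if $2k$ lies in a non-through cycle you need Lemma~\ref{lemma:parity-preserving} instead. With that small amendment, your single parity-then-separation step handles both cases uniformly: once $\pi(2k)=2j-1$ is known to be odd, $\gamma_{2p,2q}\pi^{-1}(2j-1)=\gamma_{2p,2q}(2k)\in O$, and separation of $O$ forces $\gamma_{2p,2q}(2k)=2j-1$.

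The paper proceeds by a case split. For $2k$ in a non-through cycle it does essentially what you do (via Lemma~\ref{lemma:parity-preserving}). For $2k$ in a through cycle, however, it does \emph{not} invoke Lemma~\ref{lemma:parity-reversing}; instead it unfolds the annulus along a through-string (Lemma~\ref{lemma:unfolding}) to make $\pi$ non-crossing on a disc, argues that the cyclic interval between $2k$ and $\pi(2k)$ can contain only non-through cycles of $\pi$, and deduces that this gap has even size, hence $\pi(2k)$ is odd. Your route shortcuts this geometric argument by appealing to Lemma~\ref{lemma:parity-reversing}, which has already packaged exactly that parity information; the paper effectively re-derives a special case of it. The payoff of your approach is brevity; the paper's unfolding argument is more self-contained in that it does not rely on the somewhat delicate conjugation trick used in the proof of Lemma~\ref{lemma:parity-reversing}.
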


\begin{proof}
First let us suppose that $2k$ is in a through cycle of
$\pi$. Let $l$ be such that $2k$, $\pi(2k)$, \dots,
$\pi^{l-1}(2k)$ are in the same cycle of $\gamma_{2p, 2q}$
but $\pi^l(2k)$ is in a different cycle. Let $\tilde\gamma =
\gamma_{2p, 2q} (\pi^{l-1}(2k), \ab\gamma_{2p,
  2q}^{-1}\pi^l(2k))$, then $\tilde\gamma$ has one cycle
and, by Lemma \ref{lemma:unfolding}, $\pi$ is non-crossing
with respect to $\tilde\gamma$. See Figure
\ref{fig10-11}. Let us suppose $2k \in [2p]$; the case when
$2k \in [2p+1,\ab 2p + 2q]$ is identical. The we may write
\begin{eqnarray*}
\tilde\gamma
&=&
(1, 2, 3, \dots, 2k, \dots, \pi^{l-1}(2k),
\pi^l(2k), \gamma_{2p,2q}\pi^l(2k), \dots, \\
&& \quad
2p + 2q, 2p +1, \dots, \gamma_{2p,2q}^{-1}\pi^l(2k),
\gamma_{2p,2q}\pi^{l-1}(2k), \dots, p)
\end{eqnarray*}
The cyclic interval $I = (2k, \gamma_{2p,2q}(2k), \dots,
\pi^{l-1}(2k))$ lies in the cycle  of
$\gamma_{2p,2q}$ containing $(1,2,3, \ab\dots,\ab 2p)$, and the endpoints of $I$ lie in the same
cycle of $\pi$, see \cite[Remark 3.4 (2)]{mn}. Let $c$ be a
cycle of $\pi$ containing a point of $I$ but not $2k$. Since
$\pi$ is non-crossing with respect to $\tilde\gamma$, $c$
must be contained in $I$ and thus not be a through
cycle. Thus the gap, if it exists, between $2k$ and
$\pi(2k)$ is a union of non-through cycles of $\pi$. Hence
$\pi(2k) = 2j -1$ for some $j$. Then $\gamma_{2p,2q}(2k)$
and $\pi(2k)$ are both in $O$ and in the same cycle of
$\gamma_{2p, 2q}\pi^{-1}$. Hence they must be equal.

\begin{figure}
\setbox1=\hbox{\includegraphics{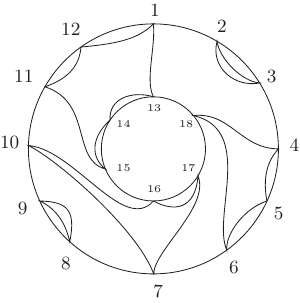}}
\setbox2=\hbox{\includegraphics{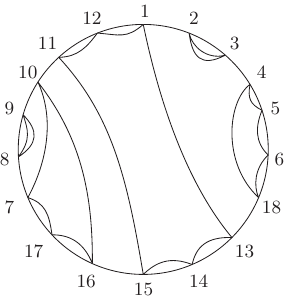}}
\hfill$\vcenter{\hsize=\wd1\box1}$
\hfill
$\vcenter{\hsize=\wd2\box2}$
\hfill{}
\caption{\small\label{fig10-11}\small On the left we see the
  permutation $\pi = (1, \ab 13, 14, 15,11, 12)(2,3)\ab(4,5,6, \ab 18),\ab
  (7, 10, 16, 17)(8,9)$. $\pi$ is in $S_{NC}^+(12,6)$
  and $\gamma_{12,6}\pi^{-1}$ separates the points of $O
  = \{1,\ab 3, 5, 7, 9,11,13,15,17\}$. On the right is
  $\pi$ drawn relative to  $\gamma_{12,6}(6,17)$.}
\end{figure}

Now suppose that $2k$ is not in a through cycle of
$\pi$. Then by Lemma~\ref{lemma:parity-preserving}, $\pi(2k)
= \gamma_{2p,2q}(2l)$ for some $l$. So $\gamma_{2p,
  2q}\pi^{-1}( \gamma_{2p, 2q}(2l)) =
\gamma_{2p, 2q}(2k)$. Then we have that $\gamma_{2p,2q}(2k)$ and
$\gamma_{2p, 2q}(2l)$ are in the same orbit of $\gamma_{2p,
  2q}\pi^{-1}$ and thus $\gamma_{2p, 2q}(2k) = \gamma_{2p,
  2q}(2l)$. Hence $\pi(2k) = \gamma_{2p, 2q}(2l) =
\gamma_{2p, 2q}(2k)$ as required.
\end{proof}

\begin{notation}\label{notation:double}
Let $V \subset [p + q]$ and $\hat V \subset [2p + 2q]$ be
the set $\hat V = \{ 2k \mid k \in V \} \cup \{ 2k + 1 \mid
k \in V \}$. We call $\hat V$ the \textit{double} of $V$.
\end{notation}

\begin{lemma}
Let $\pi \in S_\NC^+(2p,2q)$ be such that $\gamma_{2p,
  2q}\pi^{-1}$ separates the points of $O$. Let $V$ be the
union of all through cycles of $\pi$. Then there is $U
\subset [p + q]$ such that $V = \hat U$.

\end{lemma}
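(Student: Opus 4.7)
The plan is to exhibit the set $U$ by showing that $V$ is closed under the ``adjacent pair'' operation. Define the involution $\alpha$ of $[2p+2q]$ by $\alpha(2k) = \gamma_{2p,2q}(2k)$ for each even $2k$, and $\alpha(j) = \gamma_{2p,2q}^{-1}(j)$ for each odd $j$; this is well defined because both cycles of $\gamma_{2p,2q}$ have even length, so $\gamma_{2p,2q}$ sends even to odd, and $\alpha$ pairs each even number with its $\gamma_{2p,2q}$-successor. It suffices to show that $V$ is $\alpha$-invariant, for then $U := \{k \in [p+q] : 2k \in V\}$ will automatically satisfy $\hat U = V$.

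Since $\alpha$ is an involution, invariance of $V$ is equivalent to invariance of its complement $V^c$, the union of the non-through cycles of $\pi$. Let $c$ be a non-through cycle of $\pi$; by symmetry we may assume $c \subset [2p]$. For any even $2k \in c$, since $\pi(2k) \in c \subset [2p]$ lies in the same cycle of $\gamma_{2p,2q}$ as $2k$, Lemma~\ref{lemma:plus-singletons} gives $\pi(2k) = \gamma_{2p,2q}(2k) = \alpha(2k)$, so $\alpha(2k) \in c$. For any odd $2j-1 \in c$, by Lemma~\ref{lemma:parity-preserving} the predecessor $a := \pi^{-1}(2j-1) \in c$ has opposite parity and is therefore even; applying the previous step to $a$ yields $2j-1 = \pi(a) = \gamma_{2p,2q}(a)$, hence $a = \gamma_{2p,2q}^{-1}(2j-1) = \alpha(2j-1) \in c$. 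Thus $c$ is $\alpha$-invariant, and so is $V^c$.

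Therefore $V$ is $\alpha$-invariant, and setting $U = \{k \in [p+q] : 2k \in V\}$ yields $V = \hat U$: every $k \in U$ contributes both $2k \in V$ and $\alpha(2k) = \gamma_{2p,2q}(2k) \in V$, and conversely every element of $V$ is either such a $2k$ or the $\alpha$-image of one. The only delicate point is invoking Lemma~\ref{lemma:plus-singletons} to pin down what $\pi$ does on the even elements of a non-through cycle; once that rigidity is combined with the parity alternation of Lemma~\ref{lemma:parity-preserving}, the remainder of the argument is formal.
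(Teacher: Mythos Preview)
Your proof is correct. Both you and the paper hinge on Lemma~\ref{lemma:plus-singletons} as the rigidity input, but you organize the argument differently. The paper works directly with the through cycles: it asserts that each $c \cap [2p]$ and $c \cap [2p+1,2p+2q]$ is a disjoint union of cyclic intervals of the shape $\{2i,\gamma_{2p,2q}(2i),\ldots,\gamma_{2p,2q}^{2r+1}(2i)\}$, and reduces this to the statement of Lemma~\ref{lemma:plus-singletons} for even points on a through cycle. You instead attack the complement: you introduce the pairing involution $\alpha$, show every non-through cycle is $\alpha$-invariant (using Lemma~\ref{lemma:plus-singletons} on the even elements and Lemma~\ref{lemma:parity-preserving} to feed the odd elements back into that case), and conclude that $V$ is $\alpha$-invariant by complementation. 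Your formulation makes explicit what the paper leaves somewhat terse, namely that ``$V=\hat U$ for some $U$'' is equivalent to $\alpha$-invariance of $V$; the paper's cyclic-interval description is the same condition in disguise. The trade-off is that your route needs both Lemmas~\ref{lemma:plus-singletons} and~\ref{lemma:parity-preserving}, whereas the paper's direct argument cites only Lemma~\ref{lemma:plus-singletons} (though the parity alternation is implicitly used in the interval decomposition).
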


\begin{proof}
We must show that $V$ is the disjoint union of cyclic
intervals of the form
\[
\{2i, \gamma_{2p,2q}(2i), \gamma_{2p,2q}^2(2i), \dots,
\gamma_{2p,2q}^{2r+1}(2i) \} \eqno (*)
\] 
for if this is so then, we can take $U$ to be the
corresponding union of cyclic intervals $\{i,
\gamma_{p,q}(i), \dots, \gamma_{p,q}^r(i) \}$. To prove that
$V$ has this structure it suffices to show that for each
through cycle $c$ of $\pi$ we have that both $c \cap [2p]$
and $c \cap[2p + 1, 2p + 2q]$ have this structure, i.e. are
a disjoint union of cyclic intervals of the form $(*)$; this
reduces to showing that if $2k$ is in a through cycle of
$\pi$ and both $2k$ and $\pi(2k)$ are in the same cycle of
$\gamma_{2p,2q}$, then $\pi(2k) = \gamma_{2p,2q}(2k)$. This
is what was proved in Lemma~\ref{lemma:plus-singletons}.
\end{proof}

\begin{notation}\label{notation-grouped}
Recall that $\cPS_\NC(p,q)'$ is the set of all partitioned
permutations $(\cU, \pi)$ where $\cU$ has a block which is
the union of two cycles of $\pi$, see Section
\ref{section:partitioned_permutations}.  Suppose we are
given $(\cU, \sigma) \in \cPS_\NC(p,q)'$. Write $\sigma =
\sigma_1 \times \sigma_2 \in NC(p) \times NC(q)$. Let $U$ be
the block of $\cU$ which is the union of a cycle of
$\sigma_1$ and a cycle of $\sigma_2$. Let $S_\NC^{(\cU,
  \sigma)}(2p, 2q)$ be the set of $\pi \in S_\NC^+(2p, 2q)$
such that

\begin{itemize}
\item
each non-through cycle of $\pi$ is the double of some cycle
$c$ of $\sigma$ (c.f. Definition \ref{def:partition_double})

\item
the union of all the through cycles of $\pi$ is $\hat U$.

\end{itemize}
\end{notation}

\begin{proposition} \label{prop-even1}
\[
\{\pi \in S_\NC^+(2p, 2q) \mid \gamma_{2p, 2q}\pi^{-1} \sep
O\} = \ds\bigcup_{(\cU, \sigma) \in \cPS_\NC(p,q)'}
S_\NC^{(\cU, \sigma)}(2p, 2q)
\]
and the union is disjoint.
\end{proposition}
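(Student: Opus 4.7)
My plan is to prove the equality by two inclusions together with disjointness of the family $\{S_\NC^{(\cU,\sigma)}(2p,2q)\}_{(\cU,\sigma)}$. The inclusion LHS $\subseteq$ RHS is constructive: from a $\pi$ on the left I extract a canonical $(\cU,\sigma)\in \cPS_\NC(p,q)'$ and verify $\pi\in S_\NC^{(\cU,\sigma)}$. The inclusion RHS $\subseteq$ LHS is a verification that the separation property is forced by the structural constraints in the definition of $S_\NC^{(\cU,\sigma)}$; this is the main obstacle.

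\textbf{LHS $\subseteq$ RHS and disjointness.} Take $\pi$ in the LHS. By Lemma~\ref{lemma:plus-singletons}, whenever $2k$ and $\pi(2k)$ lie in the same cycle of $\gamma_{2p,2q}$ one has $\pi(2k)=\gamma_{2p,2q}(2k)$. Hence every non-through cycle of $\pi$ takes the form $(2i_1,2i_1+1,2i_2,2i_2+1,\ldots,2i_k,2i_k+1)$ and is thus the double, in the sense of Definition~\ref{def:partition_double}, of the cycle $(i_1,\ldots,i_k)$. By the lemma immediately preceding Notation~\ref{notation-grouped}, the union of all through cycles equals $\hat U$ for a unique $U\subset [p+q]$. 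Let $\sigma_1\in NC(p)$ be the partition whose blocks are the undoubled non-through cycles lying in $[p]$ together with the extra block $U\cap [p]$, and define $\sigma_2\in NC([p+1,p+q])$ symmetrically; non-crossingness is supplied by Lemma~\ref{lemma:restriction} combined with the elementary fact that doubling both preserves and reflects crossings. Setting $\cU$ to join $U\cap [p]$ with $U\cap [p+1,p+q]$ into one block places $(\cU,\sigma_1\times\sigma_2)$ in $\cPS_\NC(p,q)'$ and $\pi$ in $S_\NC^{(\cU,\sigma)}(2p,2q)$. Since every ingredient used to build $(\cU,\sigma)$ is canonically determined by $\pi$ (the through/non-through split of cycles, the undoubling, and the choice of $U$), distinct $(\cU,\sigma)$ yield disjoint subsets, establishing disjointness as well.

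\textbf{RHS $\subseteq$ LHS and the main obstacle.} Given $\pi\in S_\NC^{(\cU,\sigma)}(2p,2q)$ I must show $\gamma_{2p,2q}\pi^{-1}\sep O$. For $2k$ on a non-through cycle the doubling form of that cycle forces $\pi(2k)=2k+1=\gamma_{2p,2q}(2k)$, which makes $2k+1$ a fixed point of $\gamma_{2p,2q}\pi^{-1}$. The substantive case is $2k$ on a through cycle with $\pi(2k)$ on the same circle: parity preservation (Lemma~\ref{lemma:parity-reversing}) then yields $\pi(2k)=2j+1$ for some $j\in U$ on that same circle, and one must force $j=k$. My approach is to unfold the annulus along a through arc of $\pi$ via Lemma~\ref{lemma:unfolding}, reducing the annular question to a disc-level question where the parity-alternation bijection of Lemma~\ref{lemma:hat-bijection} applies. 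The critical use of the hypothesis $(\cU,\sigma)\in\cPS_\NC(p,q)'$ is that $U\cap [p]$ and $U\cap [p+1,p+q]$ are each \emph{single} cycles of $\sigma_1$ and $\sigma_2$; this rigidity, together with the non-crossing annular constraint, rules out the alternative $\pi(2k)=2j+1$ with $j\neq k$ and closes the proof. The main difficulty lies exactly in this step, since the separation condition is not built into the definition of $S_\NC^{(\cU,\sigma)}$ and must be extracted from the interplay between non-crossing annularity and the single-cycle structure of $U$.
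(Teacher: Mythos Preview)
Your argument for disjointness and for LHS $\subseteq$ RHS is essentially the paper's: read off the non-through cycles of $\pi$ (each the double of a cycle of $\sigma$), read off $\hat U$ as the support of the through cycles, assemble $(\cU,\sigma)$ via Lemma~\ref{lemma:restriction}, and observe that this data is uniquely determined by $\pi$. The paper's proof in fact stops here and does not spell out RHS $\subseteq$ LHS, so you are addressing more than it does.

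Your sketch of RHS $\subseteq$ LHS, however, has a gap. The non-through case is fine: doubled cycles force $\pi(2k)=\gamma_{2p,2q}(2k)$, making the corresponding odd points fixed under $\gamma_{2p,2q}\pi^{-1}$. For the through case you propose to unfold and apply Lemma~\ref{lemma:hat-bijection}, but that lemma \emph{assumes} the separation condition as its hypothesis, so invoking it to \emph{derive} separation is circular; and your appeal to the ``single-cycle structure of $U$'' is asserted rather than argued---nothing you write shows why $\pi(2k)=2j+1$ with $j\neq k$ on the same circle is incompatible with membership in $S_\NC^{(\cU,\sigma)}(2p,2q)$. The tool the paper supplies for this is Lemma~\ref{lem-even2}, proved just after the proposition: for a parity-preserving $\pi\in S_\NC^{\all}$ one computes directly that each cycle of $\gamma\pi^{-1}$ is either a singleton or a through pair of opposite parities, so odd points are automatically separated. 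Adapting that explicit computation to the through part of $\pi$ on $\hat U$, and combining with your non-through analysis, is the way to close the gap.
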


\begin{proof}
First let us show that the union is disjoint. Suppose $\pi
\in S_\NC^{(\cU, \sigma)}(2p, 2q)$. Since each non-through
cycle of $\pi$ is the double of a cycle of $\sigma$, all the
cycles of $\sigma$, except the two cycles joined by $\cU$,
are determined by $\pi$. Moreover the union of the two
cycles of $\sigma$ joined by $\cU$ is the union of the
through cycles of $\pi$; so these two cycles of $\sigma$,
one in $[p]$ and the other in $[p+1, p+q]$, are also
determined by $\pi$. Thus the union is disjoint.

Let $\pi \in S_\NC^+(2p, 2q)$ be such that $\gamma_{2p,
  2q}\pi^{-1}$ separates the points of $O$. Let $\pi_1\in
NC(p)$ and $\pi_2 \in NC([p+1, p+q])$ be the non-crossing
partitions constructed in Lemma \ref{lemma:restriction}. By
Lemma \ref{lemma:parity-preserving}, each non-through cycle
of $\pi$ is the double of a subset of either $[p]$ or $[p+1,
  p+q]$. By Lemma \ref{lemma:plus-singletons} the blocks of
$\pi_1$ and $\pi_2$ which come from the through cycles of
$\pi$ are also the doubles of subsets of $[p]$ and $[p+1,
p+q]$ respectively. Hence there are $\sigma_1 \in NC(p)$ and
  $\sigma_2 \in NC([p+1, p+q])$ such that $\pi_1 = \hat
  \sigma_1$ and $\pi_2 = \hat \sigma_2$. Now let $\cU \in
  \cP(p+q)$ be the partition whose blocks are just the
  blocks of $\sigma_1$ and $\sigma_2$, except we join the
  blocks of $\sigma_1$ and $\sigma_2$ coming from the
  through cycles of $\pi$. Letting $\sigma = \sigma_1 \times
  \sigma_2$, we have $\pi \in S_\NC^{(\cU, \sigma)}(2p,
  2q)$.
\end{proof}

\begin{notation}\label{not:all_thorough}
Suppose $\pi \in S_\NC(p,q)$ is such that all cycles contain
points of both cycles of $\gamma_{p,q}$. Then we say that
$\pi$ has \textit{all through cycles}. The set of
non-crossing annular permutations with all through cycles is
denoted $S_\NC^{\all}(p,q)$.
\end{notation}

\begin{lemma} \label{lem-even2}
Let $\pi \in S_\NC^\all(p,q)$, then all cycles of
$\gamma_{p,q} \pi^{-1}$ are either singletons or pairs and
the pairs are all through cycles. Moreover, if $\pi$ is
parity preserving (c.f. Def. \ref{def:reversing}), then
$\gamma_{p,q}\pi^{-1}$ separates the points of $O$.
\end{lemma}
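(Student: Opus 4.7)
The plan is to reduce all three assertions to a single structural proposition: \emph{for $\pi \in S_{\NC}^{\all}(p,q)$, if $k$ and $\pi(k)$ lie in the same cycle of $\gamma_{p,q}$, then $\pi(k) = \gamma_{p,q}(k)$.} I would establish this first by a non-crossing argument in the annulus. Suppose $k, \pi(k) \in [p]$ with $\pi(k) \neq \gamma_{p,q}(k)$. Then the $\pi$-arc from $k$ to $\pi(k)$, together with the outer-circle arc through $\gamma_{p,q}(k), \gamma_{p,q}^2(k), \dots, \gamma_{p,q}^{-1}(\pi(k))$, bounds a region $R$ of the annulus that is disjoint from $[p+1,p+q]$. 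By non-crossing, any $\pi$-cycle meeting $R$ would be confined to $R$ and hence have no element in $[p+1,p+q]$, contradicting the all-through hypothesis; so no outer points can lie strictly inside $R$, forcing $\pi(k) = \gamma_{p,q}(k)$. The case $k, \pi(k) \in [p+1,p+q]$ is symmetric.

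Given the proposition, the assertion that any $2$-cycle of $\gamma_{p,q}\pi^{-1}$ is through is immediate: if $(k,l)$ is a $2$-cycle with $k \neq l$ and $k, l$ in the same $\gamma_{p,q}$-cycle, the equation $\pi(\gamma_{p,q}^{-1}(l)) = k$ and the proposition force $k = l$, a contradiction. For the assertion that every cycle of $\gamma_{p,q}\pi^{-1}$ has length at most two, I would verify $(\gamma_{p,q}\pi^{-1})^2(k) = k$ by cases on the location of $l := \pi^{-1}(k)$. If $l$ is on the same $\gamma_{p,q}$-cycle as $k$, the proposition applied at $l$ gives $l = \gamma_{p,q}^{-1}(k)$, so $k$ is a fixed point. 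If $l$ is on the opposite cycle, the claim reduces to showing $\pi(\gamma_{p,q}^{-1}(k)) = \gamma_{p,q}(l)$; this is the main technical obstacle. To prove it I would first use the structural proposition to conclude that $\pi(\gamma_{p,q}^{-1}(k)) = \gamma_{p,q}^s(l)$ for some $s \geq 1$, and then invoke Lemma \ref{lemma:unfolding} to unfold the annulus along $l \to k$: in the resulting cyclic order $\tilde\gamma$, the points $\gamma_{p,q}^{-1}(k)$ and $\gamma_{p,q}^s(l)$ are separated by the segment $\gamma_{p,q}(l),\dots,\gamma_{p,q}^{s-1}(l)$, which is confined to the region bounded by the chord $\gamma_{p,q}^{-1}(k)\to\gamma_{p,q}^s(l)$. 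Applying the proposition iteratively to each intermediate point (together with non-crossing with respect to $\tilde\gamma$) would force $\pi(\gamma_{p,q}^{s-1}(l)) = \gamma_{p,q}^s(l)$, giving $\gamma_{p,q}^{s-1}(l)$ as a second $\pi$-preimage of $\gamma_{p,q}^s(l)$, which is impossible unless $s = 1$.

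Finally, for the parity-preserving assertion, suppose $\pi$ is parity preserving (so $p$ and $q$ are even, per Definition \ref{def:reversing}). Let $(k,l)$ be any $2$-cycle of $\gamma_{p,q}\pi^{-1}$, necessarily through by the previous step. The identity $\pi(\gamma_{p,q}^{-1}(l)) = k$ is then a through $\pi$-move, and Lemma \ref{lemma:parity-reversing} yields $\gamma_{p,q}^{-1}(l) \equiv k \pmod{2}$. Since $p$ and $q$ are even, $\gamma_{p,q}^{-1}$ reverses parity on each of its cycles, so $l$ and $k$ have opposite parities. Consequently no $2$-cycle of $\gamma_{p,q}\pi^{-1}$ contains two elements of $O$, and $\gamma_{p,q}\pi^{-1}$ separates the points of $O$.
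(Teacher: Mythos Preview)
Your proposal is correct, but it takes a substantially different (and longer) route than the paper's proof.

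The paper's argument is a direct computation based on the standard structural fact about $S_{\NC}^{\all}(p,q)$: each of the $k$ cycles of $\pi$ has the form $(i_1^{(l)}, \dots, i_{r_l}^{(l)}, j_1^{(l)}, \dots, j_{s_l}^{(l)})$, where the $I_l = (i_1^{(l)},\dots,i_{r_l}^{(l)})$ are cyclic intervals partitioning $[p]$ and the $J_l$ are cyclic intervals partitioning $[p+1,p+q]$, each cycle crossing between circles exactly twice. With this in hand one reads off $\gamma_{p,q}\pi^{-1}$ explicitly: every element except the $i_1^{(l)}$ and $j_1^{(l)}$ is a fixed point, and the remaining $2k$ elements pair into the through $2$-cycles $(\gamma_{p,q}(i_{r_l}^{(l)}), j_1^{(l)})$. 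The parity statement then follows in one line from the definition of parity preserving.

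Your approach instead isolates the weaker structural proposition ``same-circle $\pi$-steps are $\gamma_{p,q}$-steps'' and rebuilds the lemma from that via case analysis and an unfolding argument to rule out longer orbits. This is essentially a from-scratch reproof of the fact that each through cycle crosses only twice, which the paper takes as known. Your route is more self-contained but considerably heavier; the paper's is a two-line computation once the interval structure is invoked.

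Two small remarks on your write-up. First, the topological claim that the region $R$ bounded by the $\pi$-arc $k\to\pi(k)$ and the outer arc through $\gamma_{p,q}(k),\dots,\gamma_{p,q}^{-1}(\pi(k))$ is disjoint from the inner circle deserves a word of justification (e.g.\ unfold along a crossing of the through cycle containing $k$, as in Lemma~\ref{lemma:unfolding}, and use the gap property for disc non-crossing permutations). Second, in the parity-preserving paragraph you do not actually need Lemma~\ref{lemma:parity-reversing}: the definition of parity preserving already gives $\gamma_{p,q}^{-1}(l)\equiv k\pmod 2$ directly, since $\gamma_{p,q}^{-1}(l)\to k$ is a through $\pi$-step.
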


\begin{proof}
Suppose $\pi$ has $k$ cycles. Then there are cyclic
intervals 
\[
I_1, I_2, \dots,\ab I_k \subset [p] \mbox{\ and\ } 
J_1, J_2, \dots, J_k \subset [p+1, p+q]
\] 
with 
\[
I_l = (i_1^{(l)},
i_2^{(l)}, \dots, i_{r_l}^{(l)}) \mbox{\ and\ } 
J_l = (j_1^{(l)}, j_2^{(l)}, \dots, j_{s_l}^{(l)})
\] 
such that the $l^{th}$
cycle of $\pi$ is $(i_1^{(l)}, i_2^{(l)}, \dots,
i_{r_l}^{(l)},\ab j_1^{(l)}, j_2^{(l)}, \dots,\ab
j_{s_l}^{(l)})$. Hence the only through cycles of
$\gamma_{2p, 2q}\pi^{-1}$ are of the form
$(\gamma_{p,q}(i_{r_l}^{(l)}),\ab j_1^{(l)})$ and all other
cycles are singletons. If in addition $\pi$ is parity
preserving, then $i_{r_l}^{(l)}$ and $j_1^{(l)}$ are of the
same parity, so $\gamma_{p,q}(i_{r_l}^{(l)})$ and $\ab
j_1^{(l)}$ are of opposite parities. Thus
$\gamma_{p,q}\pi^{-1}$ separates the points of $O$.
\end{proof}

\begin{lemma}\label{lemma:hat-bijection}
Let $O =\{1,3,5, \dots, 2n-1\}$.
Suppose $\pi \in \NC(2n)$ is even and $\gamma_{2n}\pi^{-1}|_O =
\id_O$. Then there is $\sigma \in \NC(n)$ such that $\pi =
\hat \sigma$, in fact given by $2 \sigma(k) =
\pi^2(2k)$. Conversely, given $\sigma \in NC(n)$,
$\hat\sigma$ is even and $\gamma_{2n}\hat\sigma^{-1}|_{O} =
id_O$.
\end{lemma}

\begin{proof}
As $\pi$ is an even permutation then the cycles of $\pi$
must alternate between even and odd numbers (because any
gaps in an orbit must skip over an even number of points),
hence $\gamma_{2n}\pi^{-1}$ leaves both $N$ (the even
numbers) and $O$ invariant. The condition
$\gamma_{2n}\pi^{-1}|_O = \id_O$ then implies that for every
$k$, $\gamma_{2n}\pi^{-1}(2k-1) = 2k-1$. Hence for every
$k$, $\pi(2k) = \gamma_{2n}(2k)$, and thus if we let $\sigma
\in \NC(n)$ be given by $2\sigma(k) = \pi^2(2k)$ we have
that $\pi = \hat\sigma$ is the double (c.f. Definition
\ref{def:partition_double}) of $\sigma$, as required by
Theorem \ref{theorem:ns1}.
\end{proof}

The next proposition will be crucial in proving Theorem
\ref{MT3}; it will serve as the annular analogue of 
Lemma \ref{lemma:hat-bijection}. 

\begin{proposition}\label{prop-even3}
Let $(\cV, \pi) \in \cPS_\NC(2p, 2q)'$ be such that $\pi$ is
even and $\gamma_{2p, 2q} \pi^{-1}$ separates the points of
$O$. Then there is $$(\cU, \sigma) \in \cPS_\NC(p, q)'$$ such
that $\cV = \widehat{\cU}$ and $\pi = \hat \sigma$. Moreover
this correspondence is a bijection.
\end{proposition}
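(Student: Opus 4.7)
The idea is to reduce the proposition to two applications of the disc-level Lemma~\ref{lemma:hat-bijection}, exploiting that membership in $\cPS_\NC(2p,2q)'$ forces $\pi$ to have no through cycles.

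First I would unpack the hypothesis. Since $(\cV,\pi) \in \cPS_\NC(2p,2q)'$, the permutation $\pi$ has no cycle connecting the two circles, so it decomposes as $\pi = \pi_1 \times \pi_2$ with $\pi_1 \in \NC(2p)$ and $\pi_2 \in \NC([2p+1,2p+2q])$, and $\cV$ is obtained from $\pi$ by joining a single cycle $c_1$ of $\pi_1$ with a single cycle $c_2$ of $\pi_2$. Evenness of $\pi$ gives evenness of both $\pi_1$ and $\pi_2$. Because $\pi$ preserves each circle, $\gamma_{2p,2q}\pi^{-1}$ factors as $(\gamma_{2p}\pi_1^{-1}) \times (\gamma'_{2q}\pi_2^{-1})$, where $\gamma'_{2q}$ is the cyclic permutation on $[2p+1,2p+2q]$. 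The separation condition on $O$ therefore reduces to the two conditions $\gamma_{2p}\pi_1^{-1}|_{O_1} = \id_{O_1}$ and $\gamma'_{2q}\pi_2^{-1}|_{O_2} = \id_{O_2}$ for the obvious decomposition $O = O_1 \cup O_2$ with $O_1 = \{1,3,\dots,2p-1\}$ and $O_2 = \{2p+1,2p+3,\dots,2p+2q-1\}$.

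Next I would apply Lemma~\ref{lemma:hat-bijection} to $\pi_1$ and (after the obvious re-indexing $[2p+1,2p+2q] \leftrightarrow [2q]$) to $\pi_2$, producing unique $\sigma_1 \in \NC(p)$ and $\sigma_2 \in \NC([p+1,p+q])$ with $\pi_i = \hat\sigma_i$. Set $\sigma := \sigma_1 \times \sigma_2$. Each cycle of $\hat\sigma_i$ is the double of exactly one cycle of $\sigma_i$ (Definition~\ref{def:partition_double} and Notation~\ref{notation:double}); in particular, $c_i = \widehat{d_i}$ for a unique cycle $d_i$ of $\sigma_i$. Let $\cU$ be the partition of $[p+q]$ whose blocks are the cycles of $\sigma$, except that $d_1$ and $d_2$ are merged into one block. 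Then $(\cU,\sigma) \in \cPS_\NC(p,q)'$, and block-by-block comparison gives $\widehat{\cU} = \cV$.

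Bijectivity is then immediate from the converse direction of Lemma~\ref{lemma:hat-bijection}: given $(\cU,\sigma) \in \cPS_\NC(p,q)'$ with $\sigma = \sigma_1 \times \sigma_2$ and $\cU$ merging cycles $d_1,d_2$, the pair $(\widehat{\cU}, \hat\sigma)$ lies in $\cPS_\NC(2p,2q)'$ with the required evenness and separation property, and the two assignments are visibly mutually inverse. The one step that demands care is the bookkeeping in the third paragraph: verifying that the doubling operation carries the single joined block of $\cU$ to the single joined block of $\cV$ and respects the factorization $\sigma = \sigma_1 \times \sigma_2$. Once this is checked, nothing beyond Lemma~\ref{lemma:hat-bijection} is needed.
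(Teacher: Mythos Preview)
Your proposal is correct and follows essentially the same argument as the paper: factor $\pi=\pi_1\times\pi_2$, observe that evenness and the separation condition split over the two circles, apply Lemma~\ref{lemma:hat-bijection} to each factor to obtain $\sigma_i$, and then match the joined cycles to construct $\cU$. If anything, your write-up is more careful than the paper's about the block-by-block verification that $\widehat{\cU}=\cV$ and about the inverse map.
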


\begin{proof}
Since $(\cV, \pi) \in \cPS_\NC(2p, 2q)'$, $\pi = \pi_1
\times \pi_2$ with $\pi_1 \in NC(2p)$ and $\pi_2 \in
NC(2q)$. Moreover $\pi_1$ and $\pi_2$ are both even and
$\gamma_{2p}\pi_1^{-1}$ and $\gamma_{2q}\pi_2^{-1}$ separate
the points of $\{1, 3, 5, \dots, 2p-1\}$ and $\{1,3,5,
\dots, 2q-1\}$ respectively. Thus, by Lemma
\ref{lemma:hat-bijection}, there are $\sigma_1 \in NC(p)$
and $\sigma_2 \in NC(q)$ such that $\pi_1 = \hat \sigma_1$
and $\pi_2 = \hat\sigma_2$. Thus $\pi = \hat \sigma$. Now
$\cU$ is formed by joining a cycle of $\sigma_1$ with a
cycle of $\sigma_2$. So if we form $\cV$ by joining the
corresponding cycles of $\pi$ and $\pi_2$ then $\cV =
\widehat \cU$. Since we can recover $\sigma$ from
$\hat\sigma$ this correspondence is a bijection.
\end{proof}

%%%%%%%%%%%%%%%%%%%%%%%%%%%%%%%%%%
%%%%%%%%%%%%%%%%%%%%%%%%%%%%%%%%%%
%%%%                          %%%%
%%%%    S E C T I O N   5     %%%%
%%%%                          %%%%
%%%%%%%%%%%%%%%%%%%%%%%%%%%%%%%%%%
%%%%%%%%%%%%%%%%%%%%%%%%%%%%%%%%%%

\section{$R$-diagonal elements of second order}
\label{sec:second_order_r-diagonal}\noindent
In this section we prove Theorems \ref{MT1} and \ref{MT2}
regarding $R$-diagonal elements.

\subsection{The proof of Theorem \ref{MT1}}
Theorem \ref{MT1} asserts that if $\{a, a^*\}$ and $\{b,
b^*\}$ are second order free and $a$ is $R$-diagonal of
second order then $ab$ is $R$-diagonal of second order.

Let $c = ab$ and $c^{(1)} = c$ and $c^{(-1)} = c^*$. So we
must show that
\begin{enumerate}
  
\item[$\alpha$)]
$\kappa_n(c^{(\epsilon_1)}, c^{(\epsilon_2)}, \dots,
  c^{(\epsilon_n)}) = 0$ unless $n$ is even and $\epsilon_i
  = -\epsilon_{i+1}$ for $1 \leq i < n$, and

\item[$\beta$)]
$\kappa_{p,q}(c^{(\epsilon_1)}, c^{(\epsilon_2)}, \dots,
  c^{(\epsilon_{p+q})}) = 0$ unless $p$ and $q$ are even and
  for $1 \leq i < 2p$ or $2p +1 \leq i < 2p + 2q$ we have
  $\epsilon_i = - \epsilon_{i+1}$.

\end{enumerate}
Note that ($\alpha$) is proved in \cite{ns1} and
\cite[Prop.~15.8]{ns2}.

Fix $p$ and $q$, we shall show that
$\kappa_{p,q}(c^{(\epsilon_1)}, c^{(\epsilon_2)}, \dots,
c^{(\epsilon_{p+q})})$ is necessarily 0 unless $p$ and $q$
are even and $\epsilon_i = -\epsilon_{i+1}$ for $1 \leq i <
p$ and  $p+1 \leq i < p+q$. To expand this cumulant we
use the formula for cumulants with products as entries as
written in Proposition \ref{mst},
\begin{eqnarray*}\lefteqn{%
\kappa_{p,q}( (ab)^{(\epsilon_1)}, (ab)^{(\epsilon_2)},
\dots, (ab)^{(\epsilon_{p+q})}) } \\
& = &
\sum_{\pi\in S_\NC(2p,2q)}
\kappa_\pi(x_1, x_2, \dots, x_{2(p+q)-1}, x_{2p+2q})
\end{eqnarray*}
\[
+ \mathop{\sum_{\pi \in \NC(2p) \times \NC(2q)}}_{\pi =
  \pi_1 \times \pi_2} \mathop{\sum_{\cV \geq \pi}}_{|\cV| =
  |\pi| + 1} \kappa_{(\cV, \pi)} (x_1, x_2, \dots,
x_{2(p+q)-1}, x_{2p+2q}),
\]
where 
\[
x_{2i-1} = \begin{cases}a&\epsilon_i=1\\
                        b^*&\epsilon_i=-1
            \end{cases}
\ \mbox{and}\            
x_{2i} =   \begin{cases}a^*&\epsilon_i=-1\\
                        b&\epsilon_i=1
            \end{cases} ,
\]
and the sum is over all $\pi \in S_{NC}(2p+2q)$ in the first
sum, and $\NC(2p) \times \NC(2q)$ in the second, such that
${\gamma_{2p,2q}\pi^{-1}}|_{_O} = \id_O$ and $O = \{1, 3, 5,
\dots,\ab 2(p+q)-1\}$. Moreover, in the second sum we require
the partition $\cV$ to have one block which is the union of
a cycle of $\pi_1$ and a cycle of $\pi_2$.

Since we have assumed that $\{a, a^*\}$ and $\{b, b^*\}$ are
second order free we know that both
$\kappa_{(\cV,\pi)}(x_1,\dots,x_{2p+2q})$ and
$\kappa_\pi(x_1,\dots, , x_{2p+2q}) = 0$ unless
\begin{enumerate}

\item
all cycles of $\pi$ visit either only elements of $\{a,
a^*\}$, let's call these $a$-cycles, or only elements of
$\{b, b^*\}$, let's call these $b$-cycles; moreover, since
$a$ is $R$-diagonal, the $a$-cycles must alternate between
$a$ and $a^*$ and, in particular, must have an even number
of elements (see Figure \ref{figure:cross}) and

\item
${\gamma_{2p,2q}\pi^{-1}}|_{O} = \id_O$ (c.f. Definition
  \ref{def:separates_points}).

\end{enumerate}

\begin{figure}
  \begin{minipage}[c]{0.3\textwidth}
    \includegraphics{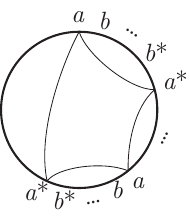}
  \end{minipage}\hfill
  \begin{minipage}[c]{0.7\textwidth}
    \caption{\small If $\kappa_\pi(x_1, \dots, x_n) \not=0$ then
  $\pi$ must consist of $a$ cycles and $b$ cycles and the
  $a$ cycles must alternate between $a$ and
  $a^*$. \label{figure:cross}}
  \end{minipage}
\end{figure}

\begin{lemma}\label{lemma:first}
Suppose $\pi \in S_\NC(2p, 2q) \cup \NC(2p) \times \NC(2q)$
satisfies $(i)$ and $(ii)$. If $\epsilon_i = -1$, then 
\[
\pi(2i)
= \gamma_{2p,2q}(2i) \mbox{\ and\ } 
\epsilon_{\gamma_{p,q}(i)} = 1.
\]
\end{lemma}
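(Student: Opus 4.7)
The plan is to combine the alternation forced by $R$-diagonality of $a$ (hypothesis (i)) with the separation condition on $O$ (hypothesis (ii)), essentially reading off the structure of a single step of $\pi$ starting at $2i$.

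First I would unpack the encoding of the $x$'s right before the lemma: if $\epsilon_i = -1$ then $x_{2i} = a^*$, and throughout $[2p+2q]$ the positions carrying an $a$ or an $a^*$ have a fixed parity pattern, namely $a$ sits at odd positions $2j-1$ with $\epsilon_j = 1$ and $a^*$ sits at even positions $2j$ with $\epsilon_j = -1$. By (i), the cycle of $\pi$ through $2i$ is entirely an $a$-cycle (all its entries are $a$ or $a^*$), and since $a$ is $R$-diagonal and $\phi$ is tracial, the labels must alternate $a, a^*, a, a^*, \dots$ around that cycle. Consequently $\pi(2i)$, as the image of an $a^*$-position, must be an $a$-position, i.e. $\pi(2i) = 2j-1$ for some $j$ with $\epsilon_j = 1$.

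Now I would invoke (ii). Applying $\gamma_{2p,2q}\pi^{-1}$ to $2j-1$ gives $\gamma_{2p,2q}(2i)$, and inspecting the two cycles $(1,2,\dots,2p)$ and $(2p+1,\dots,2p+2q)$ of $\gamma_{2p,2q}$ shows $\gamma_{2p,2q}$ sends even to odd, so $\gamma_{2p,2q}(2i) \in O$. Thus $2j-1$ and $\gamma_{2p,2q}(2i)$ are two consecutive elements of $O$ lying in the same cycle of $\gamma_{2p,2q}\pi^{-1}$, and the hypothesis $\gamma_{2p,2q}\pi^{-1}|_O = \id_O$ forces $2j - 1 = \gamma_{2p,2q}(2i)$. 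This is exactly the identity $\pi(2i) = \gamma_{2p,2q}(2i)$. Solving $2j-1 = \gamma_{2p,2q}(2i)$ inside each of the two $\gamma_{p,q}$-cycles $[p]$ and $[p+1, p+q]$ yields $j = \gamma_{p,q}(i)$, and combined with $\epsilon_j = 1$ from the previous paragraph this gives $\epsilon_{\gamma_{p,q}(i)} = 1$.

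The argument is uniform in the two cases $\pi \in S_{NC}(2p,2q)$ and $\pi \in NC(2p) \times NC(2q)$ since only (i) and (ii) enter. I do not expect a real obstacle here: this is a bookkeeping lemma, playing the role for Theorem~\ref{MT1} that Lemma~\ref{lemma:singletons} played in the even case, and its main use is to propagate the constraint $\epsilon_i = -1 \Rightarrow \epsilon_{\gamma_{p,q}(i)} = 1$ around each $\gamma_{p,q}$-cycle so as to rule out two adjacent equal signs and force $p, q$ to be even.
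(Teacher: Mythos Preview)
Your proof is correct and follows essentially the same argument as the paper: you use condition (i) to see that $x_{2i}=a^*$ forces $\pi(2i)$ to be an $a$-position, hence odd, and then condition (ii) to pin down $\pi(2i)=\gamma_{2p,2q}(2i)$, from which $\epsilon_{\gamma_{p,q}(i)}=1$ follows. The paper's writeup is slightly terser (it only records that $\pi(2i)$ is odd and reads off $\epsilon_{\gamma_{p,q}(i)}=1$ at the end from $x_{\gamma_{2p,2q}(2i)}=a$), but the logic is the same.
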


\begin{proof}
If $\epsilon_i = -1$ then $x_{2i-1} =b^*$ and $x_{2i} =
a^*$. Thus $\pi(2i)$ must be odd so that the next element in
the cycle containing $2i$ will be an $a$. Let $2j-1 =
\pi(2i)$. Then $\pi^{-1}(2j-1) = 2i$, so
$\gamma_{2p,2q}\pi^{-1}(2j-1)$ is odd. Since ${\gamma_{2p,
  2q}\pi^{-1}}|_{O}= \id_O$ we must have that
$\gamma_{2p,2q}\pi^{-1}(2j-1) = 2j-1$, so $\pi(2i) = 2j-1 =
\gamma_{2p,2q}\pi^{-1}(\pi(2i)) = \gamma_{2p,2q}(2i)$. Hence
$x_{\gamma_{2p,2q}(2i)} = a$ and so
$\epsilon_{\gamma_{p,q}(i)} = 1$.
\end{proof}

\begin{proof}[Proof of Theorem \ref{MT1}]
To prove ($\beta$) we have to show that if there is $i$
such that $\epsilon_i = \epsilon_{i+1}$ then no $\pi$ can
simultaneously satisfy  conditions (\textit{i}) and
(\textit{ii}).

Lemma \ref{lemma:first} already says that $\epsilon_i =
\epsilon_{i+1} = -1$ is impossible.  Suppose that $\epsilon_i =
\epsilon_{i+1}= 1$. Then $x_{2i+1} = a$ and so
$\pi^{-1}(2i+1) = 2j$ for some $j$ and $\epsilon_j = -1$. By
Lemma \ref{lemma:first}, $\gamma_{2p,2q}(2i) = \pi(2j) =
\gamma_{2p,2q}(2j)$; so $i = j$, but $\epsilon_j = -1$ and
$\epsilon_i = 1$. As this is impossible, we cannot have $\epsilon_i =
\epsilon_{\gamma_{p,q}(i)}$.

The arguments above show that $\epsilon_i =- \epsilon_{i+1}
$, for $1 \leq i <p$ or $p+1 \leq i <p +q$, and that
$\epsilon_p=-\epsilon_{1}$ and
$\epsilon_{p+q}=-\epsilon_{p+1}$. This implies that $p$ and
$q$ must be even and thus we have proved ($\beta$) as
desired.
\end{proof}

A \textit{second order Haar unitary} is the second order
limiting distribution of a Haar distributed random unitary
matrix.  In Proposition
\ref{prop:r-diagonality-haar-unitary} we show that a second
order Haar unitary is second order $R$-diagonal; see Section
\ref{exa:haar}.  Thus we get the following
corollary. Furthermore, in Proposition \ref{R diagonal to
  even}, part (\textit{iii}) we will see that any
$R$-diagonal element can be realized in this way.

\begin{corollary}
Let $u$ and $b$ be elements in some second order
$*$-prob\-a\-bil\-i\-ty space such that $u$ is a second
order Haar unitary and such that u and b are second order
$*$-free.  Then $ub$ is a R-diagonal.
\end{corollary}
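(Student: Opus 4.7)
The plan is to derive this corollary as a direct specialization of Theorem \ref{MT1}. The content of Theorem \ref{MT1} is that if $\{a, a^*\}$ and $\{b, b^*\}$ are second order free and $a$ is second order $R$-diagonal, then $ab$ is second order $R$-diagonal. So I only need to verify that the second order Haar unitary $u$ satisfies the hypotheses required of $a$, namely that it is second order $R$-diagonal in the sense of Definition \ref{defi1}(2).

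First I would recall from Section \ref{exa:haar} (referenced in the statement itself) the computation of the first and second order $*$-cumulants of a Haar unitary. At the first order level this is the classical fact from \cite[Lecture 15]{ns1}: the only non-vanishing $*$-cumulants of $u$ are the alternating ones $\kappa_{2n}(u, u^*, \dots, u, u^*) = \kappa_{2n}(u^*, u, \dots, u^*, u)$, so $u$ is $R$-diagonal in the first order sense. At the second order level the same alternating vanishing pattern holds for a second order Haar unitary, so the only non-vanishing second order $*$-cumulants are the $\kappa_{2p,2q}(u, u^*, \dots, u, u^*) = \kappa_{2p,2q}(u^*, u, \dots, u^*, u)$. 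This is exactly the definition of $u$ being second order $R$-diagonal.

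Having established that $u$ is second order $R$-diagonal, the hypotheses of Theorem \ref{MT1} are met: $\{u, u^*\}$ and $\{b, b^*\}$ are second order free by assumption, and $u$ plays the role of the $R$-diagonal element $a$. Thus Theorem \ref{MT1} immediately yields that $ub$ is second order $R$-diagonal, which is the conclusion of the corollary.

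There is no genuine obstacle here; the only substantive step is invoking the computation of the $*$-cumulants of a second order Haar unitary from Section \ref{exa:haar}. Everything else is a direct quotation of Theorem \ref{MT1}. The proof therefore reduces to one sentence once Section \ref{exa:haar} is in hand.
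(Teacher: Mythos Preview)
Your proposal is correct and matches the paper's approach exactly: the paper simply notes that a second order Haar unitary is second order $R$-diagonal (by the computation in Section~\ref{exa:haar}, specifically Proposition~\ref{prop:r-diagonality-haar-unitary}) and then invokes Theorem~\ref{MT1}. There is nothing to add.
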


\subsection{Cumulants of $aa^*$ for an $R$-diagonal Operator}

Now we prove Theorem \ref{MT2}, which allows us, to
calculate the cumulants of a second order $R$-diagonal
element $a$ with determining sequences
$(\beta_{n})_{n\geq1}$ and $(\beta_{p,q})_{p,q\geq1}$ by the
formula

\begin{equation}\label{eq:pf-of-13}
\kappa_{p,q}(aa^*,\dots, aa^*)=\sum_{\pi \in
  \cPS_{NC}(p,q)}\beta_\pi.
\end{equation}

\begin{proof}[Proof of Theorem \ref{MT2}]
We use the formula for cumulants with products as arguments
as in Proposition \ref{mst}. Indeed,
\begin{eqnarray}\label{eq:thm-13}
\kappa_{p,q}(aa^*,\dots,aa^*)
&=&
\sum_{\substack{{(\mathcal{V},\pi)} \in \cPS_{NC}(2p,2q) \\
\gamma_{2p,2q}\pi^{-1} \textrm{ sep. } O }}
\kappa_{(\mathcal{V},\pi)}(a,\dots,a^*).
\end{eqnarray}

Let $\cX = \{ (\cV, \pi) \in \cPS_{NC}(2p, 2q) \mid
\gamma_{2p,2q}\pi^{-1}$ separates the points of $O$ and
$\kappa_{(\cV, \pi)}(a, a^*, \dots, a, a^*) \not = 0\}$.  Below, we
shall define a bijection $\psi: \cX \rightarrow \cPS_{NC}(p,
q)$ such that $\kappa_{(\cV, \pi)}(a, \dots, a^*) =
\beta_{\psi(\cV, \pi)}$. This will show that right hand
sides of Equations (\ref{eq:pf-of-13}) and (\ref{eq:thm-13})
are the same and thus prove the theorem.

Now, since $a$ is $R$-diagonal, the only non-vanishing
cumulants $\kappa_{(\mathcal{V},\pi)} $ in the sum above
have entries which alternate between $a$ and $a^*$
and in particular each block of $\pi$ must have an even
number of elements.

We have two terms on the right hand side of Equation
(\ref{eq:thm-13}). The first term is when $\pi$ is a
permutation in $S^-_{NC}(2p,2q)$, then by Lemma \ref{lemma
  check} there is a unique element $\sigma \in S_{NC}(p, q)$
such that $\hat \sigma = \pi$. We set $\psi(\pi) = \sigma$
and then by construction $\beta_\sigma =
\kappa_{\psi(\sigma)}(a, a^*, \dots, a, a^*)$. If $\pi \in
S_{NC}(2p, 2q) \setminus S_{NC}^-(2p, 2q)$ we have
\[
\kappa_\pi(a, \dots, a^*) = 0.
\]

In the second case, $(\mathcal{V},\pi)$ is in
$\cPS_{NC}(2p,2q)'$; by Proposition \ref{prop-even3} there is $(
\mathcal{\check V},\check \pi) \in \cPS_{NC}(p,q)'$ such
that $\beta_{( \mathcal{\check V},\check \pi)}=
\kappa_{(\mathcal{V},\pi)}(a,a^*,...,a,a^*)$.  So we set
$\psi(\cV, \pi) = (\check\cV, \check\pi)$ and then
\begin{eqnarray} \label{eq222}
\sum_{\substack{{(\mathcal{V},\pi)} \in \cPS_{NC}(2p,2q)
    \\ \pi^{-1} \gamma_{2p,2q} \textrm{ sep.'s } O }}
\kappa_{(\mathcal{V},\pi)}(a,\dots,a^*)
=\sum_{(\mathcal{V},\pi) \in \cPS_{NC}(p,q)}
\beta_{(\mathcal{V},\pi)}.
\end{eqnarray}
as desired.
\end{proof}

\begin{remark}
Note that Equation (\ref{eq222}) can be inverted. So that given
a second order $R$-diagonal element, $a$, the
$*$-cumulants of $a$ can be recovered from the cumulants of
$aa^*$.
\end{remark}

%%%%%%%%%%%%%%%%%%%%%%%%%%%%%%%%%%
%%%%%%%%%%%%%%%%%%%%%%%%%%%%%%%%%%
%%%%                          %%%%
%%%%    S E C T I O N   6     %%%%
%%%%                          %%%%
%%%%%%%%%%%%%%%%%%%%%%%%%%%%%%%%%%
%%%%%%%%%%%%%%%%%%%%%%%%%%%%%%%%%%

\section{Even Elements}\label{sec:even_elements}

\subsection{Squares of even operators}

Let $(A,\phi,\phi_2) $ be a second order non-commutative
probability space. Recall that an element $x \in \cA$ is
called \textit{even} if $\phi(x^{2n+1})=0$ for all $n\geq0$ and \ab
$\phi_2(x^p,x^q) \ab = 0$ unless both $p$ and $q$ are even.
Recall also that for an even operator $x$, we set 
\[
\beta_n =
\kappa_{2n}^{(x)} \mathrm{\ and\ } 
\beta_{p,q} = \kappa_{2p,2q}^{(x)} + \ 
\ds\sum_{\mathclap{\pi \in \sncaplus{2p}{2q}}} \ \kappa_\pi^{(x)}
\] to be the determining sequences of $x$.

Now suppose we are given $(\cU, \sigma) \in
\cPS_\NC(p,q)'$. Write $\sigma = \sigma_1 \times \sigma_2
\in NC(p) \times NC(q)$. Let $U$ be the block of $\cU$ which
is the union of a cycle of $\sigma_1$ and a cycle of
$\sigma_2$. Recall from Notation \ref{notation-grouped} that
$S_\NC^{(\cU, \sigma)}(2p, 2q)$ is the set of $\pi \in
S_\NC^+(2p, 2q)$ such that

\begin{itemize}

\item 
each non-through cycle of $\pi$ is the double of some cycle
$c$ of $\sigma$;

\item 
the union of all the through cycles of $\pi$ is $\hat U$.

\end{itemize}

\begin{lemma}

Let $x$ be an even element. Then
\[
\mathop{\sum_{\pi \in S_\NC^+(2p, 2q)}}%
_{\gamma_{2p, 2q} \pi^{-1} \sep O}
\kappa_\pi^{(x)} 
=
\sum_{(\cU, \sigma) \in \cPS(p,q)'}
\sum_{\pi \in S_\NC^{(\cU, \sigma)}(2p, 2q)} \kappa_\pi^{(x)}.
\]
\end{lemma}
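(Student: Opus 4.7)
The plan is to observe that this lemma is essentially a direct consequence of Proposition \ref{prop-even1} together with the evenness of $x$, so the entire argument is bookkeeping rather than new combinatorics.

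First I would note that since $x$ is even, $\kappa_\pi^{(x)} = 0$ unless every cycle of $\pi$ has even length. This is built into the definition of $S_\NC^+(2p,2q)$ (see Definition \ref{def:reversing}), so restricting the sum on the left-hand side to $\pi \in S_\NC^+(2p,2q)$ with $\gamma_{2p,2q}\pi^{-1}\sep O$ loses nothing. On the right-hand side, the sets $S_\NC^{(\cU,\sigma)}(2p,2q)$ are defined as subsets of $S_\NC^+(2p,2q)$, so again the evenness condition is automatic.

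Next I would apply Proposition \ref{prop-even1} directly, which gives the disjoint union decomposition
\[
\{\pi \in S_\NC^+(2p,2q) \mid \gamma_{2p,2q}\pi^{-1} \sep O\} = \bigsqcup_{(\cU,\sigma) \in \cPS_\NC(p,q)'} S_\NC^{(\cU,\sigma)}(2p,2q).
\]
Summing $\kappa_\pi^{(x)}$ over this disjoint union and interchanging the order of summation on the right yields precisely the identity claimed in the lemma. There is no overcounting because the union is disjoint (as was verified in the proof of Proposition \ref{prop-even1}, where it was noted that a given $\pi$ uniquely determines $(\cU,\sigma)$: the non-through cycles of $\pi$ recover all cycles of $\sigma$ except for the two cycles joined by $\cU$, and the union of the through cycles of $\pi$ recovers those remaining two cycles together with the join making $\cU$).

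Since every step reduces to invoking Proposition \ref{prop-even1} and rearranging a finite sum, there is no real obstacle here; the lemma is a reformulation of that proposition adapted to the cumulant functional $\kappa^{(x)}$ and is the clean statement in the form that will be needed for the proof of Theorem \ref{MT3}.
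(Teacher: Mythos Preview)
Your proposal is correct and follows exactly the paper's approach: the paper's proof is a one-liner invoking Proposition \ref{prop-even1} to conclude that the index sets on both sides coincide (as a disjoint union), so the sums agree. Your additional remarks about evenness are harmless but unnecessary, since membership in $S_\NC^+(2p,2q)$ already forces all cycles to have even length.
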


\begin{proof}
By Proposition \ref{prop-even1} the sets being summed over
are the same and thus the sums are equal.
\end{proof}

\begin{lemma} \label{lem-even4}
Let $x$ be second order even and $(\cU, \sigma) \in
\cPS_\NC(p,q)'$. Then
\[
\beta^{(x)}_{(\cU, \sigma)} = \kappa_{(\hat\cU, \hat\sigma)}^{(x)}
+
\sum_{\pi \in S_\NC^{(\cU, \sigma)}(2p, 2q)} \kappa_\pi^{(x)}.
\]

\end{lemma}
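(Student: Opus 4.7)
The strategy is to expand $\beta^{(x)}_{(\cU,\sigma)}$ using multiplicativity of the determining sequence, then match the resulting two types of terms to the two summands on the right-hand side via the parameterization of $S_\NC^{(\cU,\sigma)}(2p,2q)$ given by Notation~\ref{notation-grouped}.

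First I would set up notation: write $\sigma = \sigma_1 \times \sigma_2 \in \NC(p)\times\NC(q)$ and let $U_1$ (a cycle of $\sigma_1$, of length $p'$) and $U_2$ (a cycle of $\sigma_2$, of length $q'$) be the two cycles joined to form the distinguished block $U$ of $\cU$. By the multiplicative definition of $\beta^{(x)}_{(\cU,\sigma)}$ (one factor per block of $\cU$, with the joined block contributing the second-order piece),
\[
\beta^{(x)}_{(\cU,\sigma)} = \beta^{(x)}_{p',q'} \cdot \prod_{c} \beta^{(x)}_{|c|},
\]
where $c$ runs over all cycles of $\sigma$ distinct from $U_1, U_2$. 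Using Definition~\ref{def3}, $\beta^{(x)}_{|c|} = \kappa^{(x)}_{2|c|}$ and
\[
\beta^{(x)}_{p',q'} = \kappa^{(x)}_{2p',2q'} + \sum_{\tau \in \sncaplus{2p'}{2q'}} \kappa^{(x)}_\tau.
\]

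Next I would distribute and treat the two pieces separately. The product $\kappa^{(x)}_{2p',2q'} \cdot \prod_c \kappa^{(x)}_{2|c|}$ is, by multiplicativity of $\kappa^{(x)}_{(\cdot,\cdot)}$, precisely $\kappa^{(x)}_{(\hat\cU,\hat\sigma)}$: each non-joined cycle $c$ of $\sigma$ is replaced by its double $\hat c$ (a cycle of length $2|c|$, c.f.\ Definition~\ref{def:partition_double}), and the joined pair $(U_1,U_2)$ is replaced by the doubled pair $(\hat U_1,\hat U_2)$ with the analogous joining, yielding the partitioned permutation $(\hat\cU,\hat\sigma)$. This accounts for the first summand on the right-hand side.

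For the remaining contribution $\sum_\tau \kappa^{(x)}_\tau \cdot \prod_c \kappa^{(x)}_{2|c|}$, I would exhibit a bijection between pairs $(\tau,\{\hat c\}_c)$ and the set $S_\NC^{(\cU,\sigma)}(2p,2q)$ as follows. Identifying $\hat U_1$ and $\hat U_2$ (with their induced cyclic orders from $\gamma_{2p,2q}$) with $[2p']$ and $[2p'+1,2p'+2q']$ respectively, an all-through, even, parity-preserving non-crossing annular permutation $\tau$ on $\hat U$ combines with the doubles $\hat c$ of the remaining cycles of $\sigma$ to produce a permutation $\pi$ on $[2p+2q]$. By construction $\pi$ is even; its non-through cycles are exactly the $\hat c$'s; and the union of its through cycles is $\hat U$, so $\pi\in S_\NC^{(\cU,\sigma)}(2p,2q)$. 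Conversely, any $\pi$ in this set decomposes uniquely: its non-through cycles recover the $\hat c$'s, and the through cycles, viewed on $\hat U$, recover $\tau$. Multiplicativity then gives $\kappa^{(x)}_\pi = \kappa^{(x)}_\tau \cdot \prod_c \kappa^{(x)}_{2|c|}$, so summing over the bijection yields the second summand.

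The main obstacle is verifying that the restriction-to-$\hat U$ map genuinely lands in $\sncaplus{2p'}{2q'}$: one must check that the through cycles of $\pi$, once the gaps occupied by non-through cycles are cyclically collapsed, form a non-crossing annular permutation on $\hat U$ that remains all-through, even, and parity-preserving. Non-crossingness is inherited because the non-through cycles lie in the gaps and cannot obstruct the annular non-crossing condition for the through cycles; evenness is automatic since $\pi$ is even and the non-through cycles are each even; and parity-preservation of $\tau$ follows from parity-preservation of $\pi$ together with the fact that $\hat U_1$ and $\hat U_2$ are each doubles, hence the cyclic order induced on $\hat U_i$ from $\gamma_{2p,2q}$ agrees in parity with the standard $\gamma_{2p'}, \gamma_{2q'}$ under the identification. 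Once this parameterization is confirmed, matching factors via multiplicativity finishes the lemma.
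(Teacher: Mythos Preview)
Your proposal is correct and follows essentially the same approach as the paper: expand $\beta^{(x)}_{(\cU,\sigma)}$ by multiplicativity, substitute $\beta^{(x)}_{|c|}=\kappa^{(x)}_{2|c|}$ and the defining formula for $\beta^{(x)}_{p',q'}$, then distribute and identify the two resulting terms with $\kappa^{(x)}_{(\hat\cU,\hat\sigma)}$ and $\sum_{\pi\in S_\NC^{(\cU,\sigma)}(2p,2q)}\kappa^{(x)}_\pi$. The paper's proof is terser at the final identification (it simply asserts the equality), whereas you spell out the bijection between $\sncaplus{2p'}{2q'}$ and the through-cycle data of $S_\NC^{(\cU,\sigma)}(2p,2q)$ and check its well-definedness; this extra care is welcome but does not constitute a different method.
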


\begin{proof}

Let the cycles of $\sigma$ be $c_1, c_2, \dots, c_k$ with
$c_{k-1} \subset [p]$, $c_k \subset [p+1, p+q]$. We shall
also write $\cU = \{c_1, \dots, c_{k-2}, c_{k-1} \cup
c_k\}$. For this proof we shall write $\beta_{c_i}$ for
$\beta^{(x)}_{k_i}$ if the cycle $c_i$ has $k_i$
elements. Then $\beta_{c_i} = \kappa_{\hat c_i}^{(x)}$; so
\[
\beta_{c_1} \dots \beta_{c_{k-2}} =
\kappa_{\hat c_1}^{(x)} \cdots \kappa_{\hat c_{k-2}}^{(x)}.
\]
Also $\beta_{c_{k-1}, c_k} = \kappa_{\hat c_{k-1},
\hat c_k}^{(x)} 
+ 
\ds\sum_{\pi \in \sncaplus{\hat c_{k-1}}{\hat c_k}} 
\kappa_{\pi}^{(x)}$. Hence
\begin{align*}
\beta^{(x)}_{(\cU, \sigma)}
& = 
\beta_{c_1} \beta_{c_2} \cdots \beta_{c_{k-2}}
\beta_{c_{k-1}, c_k} \\
& = 
\kappa_{\hat c_1}^{(x)} \kappa_{\hat c_2}^{(x)} 
\cdots \kappa_{\hat c_{k-2}}^{(x)}
\Big(\kappa_{\hat c_{k-1}, \hat c_k}^{(x)}
+
\sum_{\pi \in \sncaplus{\hat c_{k-1}}{\hat c_k}}
\kappa_{\pi}^{(x)}\Big) \\
& = 
\kappa_{(\hat\cU, \hat\sigma)}^{(x)} +
\sum_{\pi \in S_\NC^{(\cU, \sigma)}(2 p, 2 q)} 
\kappa_\pi^{(x)}.
\end{align*}
\end{proof}

\begin{lemma}\label{proposition:parity-reversing2}
Let $x$ be a even element and $\beta_n =
\kappa_{2n}^{(x)}$. Then
\[
\mathop{\sum_{\pi \in S_\NC^-(2p, 2q)}}
_{\gamma_{2p, 2q} \pi^{-1} \sep O}
\kappa_\pi^{(x)} 
=
\sum_{\sigma \in S_\NC(p,q)} \beta_{\sigma}.
\]
\end{lemma}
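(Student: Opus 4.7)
The plan is to invoke the bijection $\pi \mapsto \check\pi$ established in Lemma~\ref{lemma check}, which says that the map
\[
\pi \longmapsto \check\pi, \qquad 2\check\pi(k) = \pi^2(2k),
\]
is a bijection from $S_\NC^-(2p,2q) \cap \{\pi \mid \gamma_{2p,2q}\pi^{-1}\sep O\}$ onto $S_\NC(p,q)$. Once this bijection is in place, the only remaining issue is to verify that the summand $\kappa_\pi^{(x)}$ on the left corresponds, term by term, to $\beta_{\check\pi}$ on the right.

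To see this, I would recall the explicit construction of $\pi$ from $\sigma = \check\pi$ given in the proof of Lemma~\ref{lemma check}: if $(i_1, \dots, i_k)$ is a cycle of $\sigma \in S_\NC(p,q)$, then the corresponding cycle of $\pi$ is
\[
\bigl(2i_1,\ \gamma_{2p,2q}(2i_1),\ 2i_2,\ \gamma_{2p,2q}(2i_2),\ \dots,\ 2i_k,\ \gamma_{2p,2q}(2i_k)\bigr),
\]
which has exactly $2k$ elements. Thus every cycle of $\pi$ has twice the length of the corresponding cycle of $\sigma$, and the cycle structure of $\pi$ is determined by that of $\sigma$.

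Since $x$ is even, for each cycle of $\pi$ of length $2k$ the corresponding cumulant factor is $\kappa_{2k}^{(x)} = \beta_k$. Taking the product over all cycles of $\pi$ and using multiplicativity of $\kappa_\pi^{(x)}$, together with the definition $\beta_\sigma = \prod_{c \in \sigma} \beta_{|c|}$, we obtain
\[
\kappa_\pi^{(x)} = \prod_{c \in \sigma} \kappa_{2|c|}^{(x)} = \prod_{c \in \sigma} \beta_{|c|} = \beta_\sigma.
\]
Summing over the bijection gives the stated identity. The main obstacle is essentially already surmounted by Lemma~\ref{lemma check}; the only subtle point, and it is routine, is to remember that all cycles of $\pi \in S_\NC^-(2p,2q)$ are even (so all factors are of the form $\kappa_{2k}^{(x)}$, which is where the evenness hypothesis on $x$ is used — odd-sized cycles would give zero anyway, but here they do not occur).
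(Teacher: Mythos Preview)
Your proof is correct and follows essentially the same approach as the paper's own proof: invoke the bijection of Lemma~\ref{lemma check}, observe that each cycle of $\pi$ has twice the length of the corresponding cycle of $\check\pi$, and conclude $\kappa_\pi^{(x)} = \beta_{\check\pi}$ by multiplicativity. Your version is simply more explicit than the paper's terse argument.
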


\begin{proof}
The $\pi$'s over which we must sum are the ones that are in
bijection with $S_\NC(p,q)$, by Lemma \ref{lemma
  check}. Moreover a cycle of $\pi$ which must have even
length, corresponds to a cycle of $\check\pi$ of half this
length. For such a $\pi$ we then have $\kappa_\pi^{(x)} =
\beta_{\check\pi}$. This proves the lemma.
\end{proof}

Finally we are able to prove Theorem \ref{MT3} which states
that the second order cumulants of $x^2$ are given by
\begin{equation*}
\kappa_{p,q}(x^2,\dots,x^2)=\sum_{(\mathcal{V},\pi) \in
  \cPS_{NC}(p,q)}\beta^{(x)}_{(\mathcal{V},\pi)}.
\end{equation*}

\begin{proof}[Proof of Theorem \ref{MT3}]

On one hand, Lemma
\ref{proposition:parity-reversing2} says

\[
\mathop{\sum_{\pi \in S_\NC^-(2p, 2q)}}
_{\gamma_{2p, 2q} \pi^{-1} \sep O}
\kappa_\pi^{(x)} 
=
\sum_{\pi \in S_\NC(p,q)} \beta^{(x)}_{\pi},
\]
and on the other we have by using respectively Lemma
\ref{lem-even2} and Proposition \ref{prop-even1} that
\begin{eqnarray*} \lefteqn{%
\mathop{\sum_{(\cV, \pi) \in \cPS_\NC(2p, 2q)'}}
_{\gamma_{2p, 2q} \pi^{-1} \sep O} \kappa_{(\cV, \pi)}^{(x)}
+ \mathop{ \sum_{\pi \in S_\NC^+(2p, 2q)}} _{\gamma_{2p, 2q}
  \pi^{-1} \sep O} \kappa_\pi^{(x)}} \\
  & = &
  \sum_{(\cU, \sigma)\in \cPS_\NC(p,q)'} \bigg\{
  \kappa_{(\hat\cU, \hat\sigma)}^{(x)} + \kern-1em \sum_{\pi
    \in S_\NC^{(\cU, \sigma)}(2p,2q)} \kern-1em
  \kappa_{\pi}^{(x)} \bigg\} = \sum_{(\cU, \sigma) \in
    \cPS_\NC(p,q)'} \beta^{(x)}_{(\cU, \sigma)},
\end{eqnarray*}
where the last equality follows from Lemma \ref{lem-even4}.

From which we get using the formula for cumulants with
products as arguments and separating the sum into parts over
$S_\NC^-(2p, 2q)$, $S_\NC^+(2p, 2q)$, and $\cPS_\NC(2p,
2q)'$

\begin{align*}
\kappa_{p,q}^{(x^2)}
& =
\mathop{\sum_{(\cV, \pi) \in \cPS_\NC(2p, 2q)}}_
{\gamma_{2p, 2q}\pi^{-1} \sep O} \kappa_{(\cV, \pi)}^{(x)} \\
&=
\mathop{\sum_{\pi \in S_\NC(2p, 2q)}}_
{\gamma_{2p, 2q}\pi^{-1} \sep O} \kappa_\pi^{(x)} +
\mathop{\sum_{(\cV, \pi) \in \cPS_\NC(2p, 2q)'}}_
{\gamma_{2p, 2q}\pi^{-1} \sep O} \kappa_{(\cV, \pi)}^{(x)} \\
&=\mathop{\sum_{\pi \in S_\NC^-(2p, 2q)}}_{\gamma_{2p, 2q}\pi^{-1} \sep O} 
\kappa_\pi^{(x)} +
\mathop{\sum_{\pi \in S_\NC^+(2p, 2q)}}_
{\gamma_{2p, 2q}\pi^{-1} \sep O} \kappa_\pi^{(x)} +
\mathop{\sum_{(\cV, \pi) \in \cPS_\NC(2p, 2q)'}}_
{\gamma_{2p, 2q}\pi^{-1} \sep O} \kappa_{(\cV, \pi)}^{(x)} \\
&\stackrel{(*)}{=}
\sum_{\pi \in S_\NC(p,q)} \beta^{(x)}_{\pi}+\sum_{(\cU, \sigma) \in \cPS_\NC(p,q)'} \beta^{(x)}_{(\cU, \sigma)} \\
&=\sum_{(\cU, \sigma) \in \cPS_\NC(p,q)} \beta^{(x)}_{(\cU, \sigma)} 
\end {align*}
as desired, where the equality $(*)$ follows from Lemma
\ref{proposition:parity-reversing2}.
\end{proof}

\subsection{From $R$-diagonal to even operators}

Let $(\mathcal{A},\phi,\phi_2)$ be a non-commutative
probability space, and let $d$ be a positive
integer. Consider the algebra $M_d(\mathcal{A})$ of $d\times
d$ matrices over $\mathcal{A}$ and the functionals
$\tilde\phi:= \phi \circ \tr: M_d(\mathcal{A})
\longrightarrow \bC$, and $\tilde\phi_2:=\phi_2 \circ \tr
\otimes \tr$ on $M_d(\mathcal{A}): M_d(\mathcal{A}) \otimes
M_d(\mathcal{A}) \longrightarrow \bC$ defined by the formulas
\begin{equation*}
\tilde\phi((a_{i,j})^d_{i,j=1})=\frac{1}{d}\sum^d_{i=1}\phi(a_{ii})
= \phi(\tr(A))\end{equation*}
and
\begin{equation*}
\tilde\phi_2((a_{i,j})^d_{i,j=1},
(b_{i,j})^d_{i,j=1})=\frac{1}{d^2}\sum^d_{i,j=1}\phi_2(a_{ii},
b_{jj}) = \phi_2(\tr(A), \tr(B)). 
\end{equation*}

Then $(M_d(\mathcal{A}),\tilde\phi,\tilde\phi_2)$ is itself
a second order non-commutative probability space. Note that
if either $(a_{ij})_{i,j=1}^d$ or $(b_{ij})_{i,j=1}^d$ is
zero on the diagonal then 
\[
\tilde\phi_2( (a_{ij})_{i,j=1}^d,
(b_{ij})_{i,j=1}^d ) = 0.
\]

We can realize second order even elements as $2\times2$
matrices with $R$-diagonal elements in the off-diagonal
entries. This generalizes the theorem in \cite[Prop.~15.12]{ns2}
where the first order case was considered.
\begin{proposition}\label{R diagonal to even} 
Let $a$ be second order $R$-diagonal in $(\cA,\phi, \phi_2)$ and
consider the off-diagonal matrix,
\[ A := \left( \begin{array}{cc}
   0  &     a       \\
  a^*     &    0   \end{array} \right)\]
as an element in $(M_2(\mathcal{A}), \tilde\phi, \tilde\phi_2)$. Then

\begin{enumerate}

\item
$A$ is second order even element; 

\item
$A$ has the same determining sequence as $a$: 
$\beta_{p,q}^{(A)}=\beta_{p,q}^{(a)}$ for all 
$p,q\geq1$ and $\beta_{n}^{(A)}=\beta_{n}^{(a)}$
for all $n$;

\item
any second order $R$-diagonal operator can be realized
in the form $xu$ where $u$ is a second order Haar unitary, $x$ is second order even,
and $u$ and $x$ are second order $*$-free;

\item
The second order cumulants of $a$ and the second order
cumulants of $A$, are related
by $$\kappa_{p,q}^{(a)}=\kappa_{p,q}^{(A)}+ \sum_{\pi \in
  \sncaplus{m}{n}} \kappa_\pi^{(A)}.$$

\end{enumerate}
\end{proposition}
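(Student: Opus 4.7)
My plan is to establish the four assertions in the order (1), (2), (4), (3), since (1) is a direct matrix computation, (2) reduces to Theorems~\ref{MT2} and~\ref{MT3} already proved, (4) is a bookkeeping consequence of (2), and (3) is the only part requiring an independent construction. For (1), I would note that $A$ is off-diagonal, so the odd power $A^{2k+1}$ is off-diagonal and $\tr(A^{2k+1}) = 0$ in $\cA$, giving $\tilde\phi(A^{2k+1}) = 0$. Likewise, whenever $m$ or $n$ is odd, at least one of $A^m, A^n$ is off-diagonal, and the factorization $\tilde\phi_2 = \phi_2 \circ (\tr \otimes \tr)$ then forces $\tilde\phi_2(A^m, A^n) = 0$. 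Together with $A^* = A$, this proves $A$ is second order even.

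For (2), the core observation is $A^2 = \mathrm{diag}(aa^*, a^*a)$, so $A^{2n} = \mathrm{diag}((aa^*)^n, (a^*a)^n)$; the traciality of $\phi$ and of $\phi_2$ in each variable then yields
\[
\tilde\phi(A^{2n}) = \phi((aa^*)^n), \qquad
\tilde\phi_2(A^{2p}, A^{2q}) = \phi_2((aa^*)^p, (aa^*)^q).
\]
Hence the first- and second-order free cumulants of $A^2$ under $(\tilde\phi, \tilde\phi_2)$ agree with those of $aa^*$ under $(\phi, \phi_2)$. Applying Theorem~\ref{MT3} to the even element $A$ and Theorem~\ref{MT2} to the $R$-diagonal $a$ gives the matching expansions
\[
\sum_{(\cV, \pi) \in \cPS_{NC}(p,q)} \beta^{(A)}_{(\cV, \pi)} \,=\, \kappa^{(A^2)}_{p,q} \,=\, \kappa^{(aa^*)}_{p,q} \,=\, \sum_{(\cV, \pi) \in \cPS_{NC}(p,q)} \beta^{(a)}_{(\cV, \pi)}.
\]
On each side the unique top-order contribution is $\beta_{p,q}$, while the remaining summands are products of strictly lower-index determining coefficients; an induction on $p+q$, whose base case is the first-order identity $\beta^{(a)}_n = \beta^{(A)}_n$ obtained by the analogous first-order argument, then forces $\beta^{(a)}_{p,q} = \beta^{(A)}_{p,q}$.

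Part (4) is an immediate consequence: Definition~\ref{def3}(2) applied to the even element $A$ gives
\[
\beta^{(A)}_{p,q} = \kappa^{(A)}_{2p,2q} + \sum_{\pi \in \sncaplus{2p}{2q}} \kappa^{(A)}_{\pi},
\]
and substituting $\beta^{(A)}_{p,q} = \beta^{(a)}_{p,q} = \kappa_{2p,2q}(a, a^*, \dots, a, a^*)$ from (2) rearranges to the claimed identity.

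The main work lies in (3). My strategy is to construct, in some auxiliary second-order $*$-probability space, a second-order even element $y$ whose determining sequence is exactly $\beta^{(a)}$ together with a second-order Haar unitary $u$ that is second-order $*$-free from $y$; both are obtained by taking a suitable second-order free product. Since the Haar unitary is itself second-order $R$-diagonal (Section~\ref{exa:haar}), Theorem~\ref{MT1} implies that $yu$ (equivalently, $uy$, by cyclicity of $\phi$ and $\phi_2$) is second-order $R$-diagonal. To finish I would compute the determining sequence of $yu$ by expanding every alternating $*$-cumulant $\kappa_{2p,2q}(yu, u^*y, \dots)$ via Proposition~\ref{mst}, using second-order $*$-freeness to kill every mixed $y$/$u$ cumulant and the Haar property of $u$ (only $\kappa_2(u,u^*) = \kappa_2(u^*,u) = 1$ is nonzero in first order, with no nonvanishing second-order Haar cumulants) to collapse the remaining sum. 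A careful annular-combinatorial bookkeeping should then identify the surviving terms with $\beta^{(y)}_{p,q} = \beta^{(a)}_{p,q}$, so $yu$ and $a$ have identical second-order $*$-distributions. This bookkeeping is the main obstacle: it is the second-order analogue of \cite[Prop.~15.12]{ns2}, but it must track simultaneously the through-cycle structure of $\cPS_{NC}(p,q)$ and the parity-reversing/parity-preserving strata of Definition~\ref{def:reversing}, and the introduction itself warns that extending such constructions from first to second order is noticeably more delicate.
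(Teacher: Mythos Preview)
Your treatment of parts (1), (2), and (4) is essentially the paper's: the off-diagonal structure of $A$ gives evenness, the equality of (fluctuation) moments of $A^2$ and $aa^*$ combined with Theorems~\ref{MT2} and~\ref{MT3} gives equal determining sequences, and (4) is then just Definition~\ref{def3}(2) read backwards.

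For part (3) your plan is workable in principle but far more laborious than what the paper does, and the ``main obstacle'' you identify is in fact avoidable. The paper does not build a fresh even element $y$ and then attack $\kappa_{2p,2q}(yu,u^*y,\dots)$ by a direct Proposition~\ref{mst} expansion; instead it reuses the element $A$ already produced in (1)--(2). Take a second order Haar unitary $U$ that is second order $*$-free from $A$ and set $X=AU^*$. By Theorem~\ref{MT1}, $X$ is second order $R$-diagonal. The entire annular bookkeeping you anticipate collapses to the one-line algebraic identity
\[
XX^* \;=\; AU^*UA^* \;=\; A^2,
\]
since $A=A^*$ and $U$ is unitary. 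Hence $XX^*$ and $aa^*$ have the same first and second order moments, so by Theorems~\ref{MT2} and~\ref{MT3} (exactly the argument you already used for (2)) the determining sequences of $X$ and of $a$ coincide, and therefore $X$ and $a$ have the same second order $*$-distribution. This realizes $a$ as $xu$ with $x=A$ even, $u=U^*$ a Haar unitary, and $x,u$ second order free. Your proposed route---expanding alternating $*$-cumulants of $yu$ and tracking through-cycle/parity strata---would ultimately have to reproduce this same conclusion, but the trick $XX^*=A^2$ short-circuits all of it.
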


\begin{proof}
$A$ is even because $\tr(A^p) = 0$ for $p$ odd.  On the
  first order level claims (\textit{i})$-$(\textit{iv}) are
  proved in \cite[Prop.~15.12]{ns2}, so we shall only prove
  the corresponding claims at the second order level.  For
  (\textit{ii}), the only observation needed is that from
  Theorems \ref{MT2} and \ref{MT3} the determining sequence
  of $A$ and $a$ can be calculated from the moments and
  fluctuation moments of $aa^*$ and $A^2$ in the same way,
  (see Equations (\ref{formula:main1}) and
  (\ref{formula:main2})). So it is enough to see that the
  moments and fluctuation moments of $aa^*$ and $A^2$ are
  the same, i.e. $\tilde\phi_2(A^{2m}, A^{2n}) = \phi_2(
  (aa^*)^m, (aa^*)^n)$, but this is clear from the definition
  of $A$.

(\textit{iii}) is proved as follows. Let $U$ be a second
  order Haar unitary which is second order free from $A$. By
  Theorem \ref{MT1} and Proposition
  \ref{prop:r-diagonality-haar-unitary} (below), $X=AU^*$ is
  $R$-diagonal and $XX^*=AU^*UA=A^2$. This then implies that
  the determining series of $X$ coincides with the
  determining series of $A$ and also with the determining
  series of $a$, by (\textit{ii}) . So $X$ and $a$ are two
  second order $R$-diagonal operators with the same
  distribution.
  
(\textit{iv}) By (\textit{iii}) the determining sequences
  coincide, i.e. $\beta_{p,q}^{(A)}=\beta_{p,q}^{(a)}$ for
  all $p,q\geq1$ and $\beta_{n}^{(A)}=\beta_{n}^{(a)}$ for all $n$. This
  gives us exactly the desired relation.

\end{proof}

%%%%%%%%%%%%%%%%%%%%%%%%%%%%%%%%%%
%%%%%%%%%%%%%%%%%%%%%%%%%%%%%%%%%%
%%%%                          %%%%
%%%%    S E C T I O N   7     %%%%
%%%%                          %%%%
%%%%%%%%%%%%%%%%%%%%%%%%%%%%%%%%%%
%%%%%%%%%%%%%%%%%%%%%%%%%%%%%%%%%%

\section{Cumulants of products of free random variables}
\label{sec:products_free_random_variables}
In this section we will prove the second order analogue of
the theorem of Arizmendi and Vargas \cite{av} for the
moments and cumulants of $a_1a_2\cdots a_n$ when $a_i$'s are
second order free. We will use the formula for products as
arguments.

\begin{definition}\label{def:divisibility} 
Let a $\pi$ be a non-crossing permutation in $S_{NC}(kp,kq)$.
\begin{enumerate}
\item
$\pi$ is called $k$-\textit{divisible} if the size of every
  cycle of $\pi$ is a multiple of $k$.

\item
$\pi$ is called $k$-\textit{equal} if the size of every
  cycle of $\pi$ is $k$.

\item
$\pi$ is called $k$-\textit{alternating} if $\pi(i)\equiv
  i+1$ $\pmod k$. $\snckalt{kp}{kq}$ is the
  set of $k$-alternating permutations. 

\item  
  $\snckea{kp}{kq}$ is the
  set of $k$-equal and $k$-alternating permutations

\item
$\pi$ is called $k$-\textit{preserving} if $\pi(i) \equiv i$
  $\pmod k$.

\item
We say that $\pi$ is $k$-\textit{completing}
if $\pi$ is $k$-preserving and $\pi^{-1}\gamma_{kp,kq}$
separates the points $\{k, 2k, \ab \dots, (p+q)k\}$. 

\end{enumerate}
\end{definition}

Note that if $\pi$ is $k$-alternating then it is also
$k$-divisible. The converse is not true but there is a
$k$-to-$1$ correspondence from $k$-divisible to
$k$-alternating partitions, giving by a relabelling in one
of the circles.

From now on we will denote by
\[
\gamma=\gamma_{kp,kq}=(1,2, \dots, kp)(kp+1, \dots, kp\ab +kq)
\]
and denote by $\Kr(\sigma)=\sigma^{-1}\gamma$.  The following is
the analogue of Proposition 3.1 in \cite{av}.
\begin{lemma}\label{lem alternating}
 Let  $\sigma$ be in $S_{NC}(kp,kq)$. 
\begin{enumerate}
\item
$\sigma$ is $k$-alternating iff $\sigma^{-1}\gamma$ is
  $k$-preserving.

\item
$\sigma$ is $k$-alternating and $k$-equal iff
  $\sigma^{-1}\gamma$ is $k$-completing.
\end{enumerate}
\end{lemma}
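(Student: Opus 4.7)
The plan is to prove (i) by a direct residue calculation and then deduce (ii) by combining (i) with a short conjugation argument. The single algebraic fact driving everything is that $\gamma=\gamma_{kp,kq}$ acts on $[kp+kq]$ as ``shift by $+1$ modulo $k$'': within each of its two cycles $\gamma$ sends $j\mapsto j+1$, and at the two wrap-around points we have $\gamma(kp)=1$ and $\gamma(kp+kq)=kp+1$, both of which are still consistent with $\gamma(i)\equiv i+1\pmod k$ precisely because $k\mid kp$ and $k\mid kq$.

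For (i), this shift property gives an immediate chain of equivalences. The statement that $\sigma^{-1}\gamma$ is $k$-preserving reads $\sigma^{-1}(\gamma(i))\equiv i\pmod k$ for every $i$. Substituting $j=\gamma(i)$, so that $i\equiv j-1\pmod k$, this rewrites as $\sigma^{-1}(j)\equiv j-1\pmod k$ for every $j$; and applying $\sigma$ to both sides, with $m=\sigma^{-1}(j)$, this is exactly the $k$-alternating condition $\sigma(m)\equiv m+1\pmod k$ for every $m$. Each step is reversible, proving (i).

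For (ii), I will use (i) and reduce the extra separating condition to a statement about cycle lengths of $\sigma$. By (i), once $\sigma$ is $k$-alternating the permutation $\sigma^{-1}\gamma$ is already $k$-preserving, so being $k$-completing amounts to $(\sigma^{-1}\gamma)^{-1}\gamma=\gamma^{-1}\sigma\gamma$ separating the set $S=\{k,2k,\dots,(p+q)k\}$. But conjugation simply relabels cycles: two points $a,b$ lie in a common cycle of $\gamma^{-1}\sigma\gamma$ iff $\gamma(a),\gamma(b)$ lie in a common cycle of $\sigma$. Hence $\gamma^{-1}\sigma\gamma$ separates $S$ iff $\sigma$ separates $\gamma(S)$, which by the shift property is exactly the set of $i\in[kp+kq]$ with $i\equiv 1\pmod k$.

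Now, when $\sigma$ is $k$-alternating, the residues mod $k$ along any cycle $(a_1,\dots,a_m)$ of $\sigma$ advance by $1$ at each step, forcing $k\mid m$ and ensuring that each residue class -- in particular the class of $1$ -- is represented exactly $m/k$ times in the cycle. Therefore $\sigma$ separates the points of residue $1\bmod k$ iff every cycle of $\sigma$ has length exactly $k$, which is the $k$-equal condition. Combining with (i) yields (ii). The only bookkeeping subtlety, the wrap-around of $\gamma$ at $kp$ and $kp+kq$, dissolves because both $kp$ and $kq$ are multiples of $k$; the non-crossing hypothesis plays no role in the argument.
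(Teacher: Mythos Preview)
Your proof is correct. Part (i) is handled exactly as in the paper, via the residue calculation using $\gamma(i)\equiv i+1\pmod k$.

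For part (ii) your route differs from the paper's and is arguably cleaner. The paper treats the two directions separately: for the forward implication it argues by contradiction via a cycle-count (if $\sigma$ is $k$-alternating but not $k$-equal, then $\sigma$ --- and hence $\gamma^{-1}\sigma\gamma$ --- has at most $p+q-1$ cycles, too few to separate the $p+q$ points of $S$); for the converse it uses $\sigma^k=\id$ and computes $\gamma^{-1}\sigma^{r}\gamma(K)$ explicitly to check disjointness from $K$. You instead handle both directions at once: the conjugation identity $(\sigma^{-1}\gamma)^{-1}\gamma=\gamma^{-1}\sigma\gamma$ reduces the separating condition on $S$ to $\sigma$ separating $\gamma(S)$, the residue-$1$ class, and then the observation that in a $k$-alternating $\sigma$ every cycle of length $m$ contains exactly $m/k$ elements of each residue class gives the equivalence with $k$-equal directly. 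Your argument makes more transparent why the non-crossing hypothesis is irrelevant here, and avoids the slightly awkward asymmetry in the paper's two halves; the paper's counting argument, on the other hand, makes it explicit that the obstruction is simply a shortage of cycles.
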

\begin{proof} 
(\textit{i}) Suppose that $\sigma$ is $k$-alternating. Since
  $\gamma(i) \equiv i+1$ $\pmod k$ then
  $$\sigma^{-1}\gamma(i)=\sigma^{-1}(i+1) \equiv i \pmod
  k.$$ Conversely, if $\sigma^{-1}\gamma(i) \equiv i$ $\pmod k$ then
  $\sigma(i)=\gamma (i) \equiv i+1$ $\pmod k$.

 (\textit{ii}) Note that by (\textit{i}) if $\sigma^{-1}\gamma$ is
  $k$-completing then $\sigma$ is $k$-alternating. We claim
  that $\sigma$ is $k$-equal. Suppose that this is not the
  case. That is, suppose that $\sigma$ is $k$-alternating,
  but not $k$-equal. Then $\sigma$ has at most $p+q-1$
  cycles. But $(\sigma^{-1}\gamma)^{-1}\gamma=
  \gamma^{-1}\sigma \gamma$ has the same number of cycles as
  $\sigma$. Thus $(\sigma^{-1}\gamma)^{-1}\gamma$ cannot
  separate more than $p+q-1$ points from
  $\{k,2k,..,(p+q)k\}$. Thus $\sigma^{-1}\gamma$ is not
  $k$-completing, yielding a contradiction.

Conversely if $\sigma$ is $k$-alternating and
$k$-equal. Then $\sigma^k$ is the identity permutation. Let
$K = \{k, 2k, 3k, \dots, (p+q)k\}$. Then
$\sigma(\gamma^i(K)) = \gamma^{i+1}(K)$ for $1 \leq i \leq
k$. To show that $\sigma^{-1}\gamma$ is $k$-completing we
must show that $$\gamma^{-1}\sigma^{-r}\gamma(K) \cap K =
\nO \mathrm{\ for\ } 1 \leq r < k.$$ Since $\sigma(\gamma^i(K)) =
\gamma^{i+1}(K)$ for $1 \leq i \leq k$ we have
$\gamma^{-1}\sigma^{-r}\gamma(K) \cap K = \gamma^{-r}(K)
\cap K = \nO$ for $1 \leq i \leq k$.
\end{proof}

Now we are in position to prove Theorem \ref{products}.

\begin{theorem}[Moments and Cumulants of Products of Free Variables]
\label{thm:Moments_and_Cumulants_of_Products_of_Free_Variables}
Let $a_1, \dots, a_k$ be operators which are second order
free and such that $\kappa_{p,q}(a_i)=0$ for all
$p,q\in\mathbb{N}$.  Let $a=a_1a_2\cdots a_k$. Then
\begin{equation}\label{11}
\phi_2(a^p,a^q)=\sum_{\pi \in \snckalt{kp}{kq}}
\kappa_{\Kr(\pi)}(a_1,a_2,\dots,a_k,\dots,a_1,a_2,\dots,a_k).
\end{equation}
Furthermore,
\begin{equation} \label{12}
\kappa_{p,q}(a,\dots,a)=\sum_{\pi \in \snckea{kp}{kq}}
\kappa_{\Kr(\pi)}(a_1,\dots,a_k,\dots,a_1,\dots,a_k).
\end{equation}
\end{theorem}

\begin{proof}

We shall use the moment-cumulant formula (\S
\ref{subsec:second_order_cumulants}) and the second order
freeness of the $a_i $'s. By hypothesis, the second order
cumulants of $a_i$'s are zero. In addition we have further
assumed that the $a_i$'s are second order free. Thus for all
partitioned permutations $(\cV, \pi)$ we have that
$\kappa_{(\cV, \pi)}(a_1, \dots, a_k, \dots, a_1, \dots,
a_k) = 0$.  Thus in the moment-cumulant formula we only need to
sum over $S_{NC}(kp,kq)$:
\begin{eqnarray*}
\phi(a^p,a^q)=\sum_{\pi \in
  S_{NC}(kp,kq)}\kappa_\pi(a_1,a_2,\dots,a_k,\dots,a_1,a_2,\dots,a_k).
\end{eqnarray*}
The contribution of a $\pi$ will be non-zero only if $\pi$
joins an $a_i$ with another $a_i$, which implies that $\pi$
is $k$-preserving. Let $\sigma = \gamma \pi^{-1}$; then $\Kr(\sigma)=\pi$ and by Lemma \ref{lem alternating} $(i)$
 $\sigma$ is $k$-alternating. This proves the first
claim.

For the second formula we use the formula for products as
arguments as in Proposition \ref{mst}, for
$a=a_1a_2\cdots a_k$. 
Again, since the second order
cumulants of $a_i's$ are zero the only non-vanishing
contribution are from $\pi$ in $S_{NC}(kp,kq)$. Thus we get
\[
\kappa_{p,q}(a,...,a)=\sum_{\pi\in
  S_{NC}(kp,kq)}\kern-1em \kappa_\pi
  (a_1,...,a_{k},...,a_1,...,a_{k}),
\]
where the sum runs over the permutations $\pi$ in
$S_{NC}(kp,kq)$ such that $\pi^{-1}\gamma_{kp,kq}$ separates
the points $\{k,2k,..,(p+q)k\}$.  Since the random variables
are second order free, the sum runs only over the
$k$-preserving partitions in $S_{NC}(kp,kq)$. These two
conditions on $\pi$ mean exactly that $\pi$ is
$k$-completing. Finally, by Lemma \ref{lem alternating} the
permutations involved in the sum are exactly the Kreweras
complements of $k$-equal permutations which are also
$k$-alternating, and the formula follows.
\end{proof}

Note that by Lemma \ref{lem alternating}  we can write the set 
\[
\{ \Kr(\pi) \mid \pi \in S_{NC}(kp, kq), \pi \textrm{\ is\ }
k\textrm{-equal and\ } k\textrm{-alternating} \}
\]
 as 
\[
\{ \sigma \in S_{NC}(kp,kq) \mid \sigma \textrm{\ is\ }
k\textrm{-preserving and\ } \Kr(\sigma) \textrm{\ is\ }
k\textrm{-equal\ }\}.
\]
Of particular interest will be when $\kappa_n(a_i)=1$ for
all $i$ and all $n$.  i.e. when the $a_i$'s are free Poisson
operators. This will be explained in Example
\ref{prodpoiss}.

\begin{remark}
There is a natural bijection between $k$ divisible
partitions $(k+1)$-equal ones for $NC(n)$ which is given by
the ``fattening procedure'' as described in Arizmendi
\cite{ariz}. The second order analogue is also true, that 
is,  we have  $\snckalt{kp}{kq}$ is in bijection with
$\sncjea{jp}{jq}$, for $j = k+1$; we leave this observation as an
exercise for the reader. 
\end{remark}

%%%%%%%%%%%%%%%%%%%%%%%%%%%%%%%%%%
%%%%%%%%%%%%%%%%%%%%%%%%%%%%%%%%%%
%%%%                          %%%%
%%%%    S E C T I O N   8     %%%%
%%%%                          %%%%
%%%%%%%%%%%%%%%%%%%%%%%%%%%%%%%%%%
%%%%%%%%%%%%%%%%%%%%%%%%%%%%%%%%%%

\section{Examples and applications}\label{sec:examples}

We now give some examples on how to use the main
theorems and constructions given above.

\subsection{A second order Haar unitary is second order $R$-diagonal}

Our first example comes from \cite[\S 5]{mst}.
\label{exa:haar}

\begin{definition}\label{def:second_order_haar_unitary}
Let $(\mathcal{A},\phi,\phi_2)$ be a second order
probability space and $u\in\mathcal{A}$ a unitary. We say
that $u$ is a \textit{second order Haar unitary} if
$\phi(u^k)=\phi(u^{*k})=0$ for $k\geq1$ (i.e. $u$ is a Haar
unitary) and $\phi_2(u^k,u^{*l})=k\, \delta_{k,l}.$
\end{definition}

Let us first see that a second order Haar unitary is second
order $R$-diagonal.

\begin{proposition}\label{prop:r-diagonality-haar-unitary}
Let $u$ a second order Haar unitary. Then $u$ is second
order $R$-diagonal. Moreover given $p$ and $q$ positive
integers and $\epsilon_1, \epsilon_2, \epsilon_3, \dots ,
\epsilon_{p+q} \in \{-1, 1\}$, we have
$\kappa_{p,q}(u^{\epsilon_1}, u^{\epsilon_2}, \dots,\ab
u^{\epsilon_{n-1}}, u^{\epsilon_n}) =0$ unless $p$ and $q$
are even and
\begin{equation}\label{p-part}
\epsilon_1 + \epsilon_2 = \cdots =
\epsilon_{p-1} + \epsilon_p =
\epsilon_{p+1} + \epsilon_{p+2} = \cdots =
\epsilon_{p+q-1} + \epsilon_{p+q} = 0
\end{equation}
i.e. the $\epsilon$'s alternate in sign except possibly
between $p$ and $p+1$.
\end{proposition}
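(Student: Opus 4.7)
The first-order assertion—that $u$ is $R$-diagonal in the sense of Definition~\ref{def:r-diagonal}—is the classical content of \cite[Prop.~15.1]{ns2}, proved via the rotation symmetry $u \sim zu$ (for $z\in\mathbb{T}$) combined with moment-cumulant inversion. I would simply invoke that result.

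For the second-order content, I would first extend the rotation trick. For every $z\in\mathbb{T}$, $zu$ is again a second-order Haar unitary, since
\[
\phi_2\bigl((zu)^k,(zu)^{*l}\bigr) \;=\; z^k\bar z^l\,k\,\delta_{k,l} \;=\; k\,\delta_{k,l}.
\]
Multilinearity of $\kappa_{p,q}$ then forces
\[
\kappa_{p,q}(u^{\epsilon_1},\dots,u^{\epsilon_{p+q}}) \;=\; z^{\epsilon_1+\cdots+\epsilon_{p+q}}\,\kappa_{p,q}(u^{\epsilon_1},\dots,u^{\epsilon_{p+q}})
\]
for all $z\in\mathbb{T}$, so that the total charge $\sum_i\epsilon_i=0$ is necessary for non-vanishing. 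This is much weaker than \eqref{p-part} but already handles the global parity.

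To promote the total-charge condition to the per-circle alternation \eqref{p-part}, I would induct on $p+q$ using the second-order moment-cumulant formula
\[
\phi_2\bigl(u^{\epsilon_1}\!\cdots u^{\epsilon_p},\,u^{\epsilon_{p+1}}\!\cdots u^{\epsilon_{p+q}}\bigr)
\;=\;\sum_{\pi\in S_{NC}(p,q)}\kappa_\pi \;+\;\sum_{(\cU,\pi)\in\cPS_{NC}(p,q)'}\kappa_{(\cU,\pi)}
\]
and solving for the top term (where $\pi=\gamma_{p,q}$, $\cU=1_{p+q}$), which is $\kappa_{p,q}(u^{\epsilon_1},\dots,u^{\epsilon_{p+q}})$ itself. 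The left-hand side is explicitly $E_1\,\delta_{E_1+E_2,0}\,[E_1\neq 0]$ with $E_1=\sum_{i=1}^p\epsilon_i$ and $E_2=\sum_{i=p+1}^{p+q}\epsilon_i$, directly from the definition of second-order Haar unitary. On the right, the first-order $R$-diagonality (already secured above) kills every summand containing a single-cycle block whose $\epsilon$'s do not strictly alternate, while the inductive hypothesis kills every summand whose joined two-cycle block in $\cPS_{NC}(p,q)'$ fails the lower-order version of \eqref{p-part}. A matching of the surviving contributions with the explicit LHS then forces $\kappa_{p,q}(u^{\epsilon_1},\dots,u^{\epsilon_{p+q}})$ to vanish whenever \eqref{p-part} fails, and in particular whenever $p$ or $q$ is odd.

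The main obstacle will be the combinatorial bookkeeping in the inductive step—specifically, matching contributions from through-cycle permutations $\pi\in S_{NC}(p,q)$ against those from partitioned permutations in $\cPS_{NC}(p,q)'$ with a joined two-cycle block, while simultaneously tracking per-circle sign sums and the alternation constraint. Fortunately, the annular machinery developed in Section~\ref{sec:annular_non-crossing_partitions}—particularly the correspondence $\pi\mapsto\check\pi$ of Lemma~\ref{lemma check} and the doubling bijection of Proposition~\ref{prop-even3}—supplies exactly the right framework for this matching.
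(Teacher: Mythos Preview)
Your overall architecture---rotation symmetry for the total-charge constraint, then induction on $p+q$ via the second-order moment--cumulant relation---matches the paper's scaffolding. But the inductive step as you describe it has a real gap. After you strip out terms killed by first-order $R$-diagonality and by the inductive hypothesis, you are \emph{not} left with only the top term $\kappa_{p,q}$: if, say, $\epsilon_2=\epsilon_3$ in the first circle, there are still plenty of $\pi\in S_{NC}(p,q)$ and $(\cU,\pi)\in\cPS_{NC}(p,q)'$ in which $2$ and $3$ sit in \emph{different} cycles, each of which can alternate on its own. Those residual terms form a nontrivial sum that you would have to show equals $\phi_2(u^{E_1},u^{E_2})$ exactly. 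Worse, some of the surviving $\kappa_{(\cU,\pi)}$ involve lower-order \emph{alternating} second-order cumulants $\kappa_{k,l}(u,u^*,\dots)$ whose values your induction has not yet computed. The tools you reach for from Section~\ref{sec:annular_non-crossing_partitions} (the $\pi\mapsto\check\pi$ map, the doubling bijection) concern even permutations of $[2p+2q]$ and have no bearing on this matching problem.

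The paper avoids all of this with one idea you are missing: exploit $u\,u^*=1$. Using traciality one arranges $\epsilon_1=1,\ \epsilon_2=-1,\ \epsilon_3=-1$, so that $u^{\epsilon_1}u^{\epsilon_2}=1$ and hence $\kappa_{p-1,q}(u^{\epsilon_1}u^{\epsilon_2},u^{\epsilon_3},\dots,u^{\epsilon_{p+q}})=0$ by the vanishing of cumulants with a scalar entry. Expanding this via the products-as-entries formula (Proposition~\ref{mst}) restricts the sum to $\pi$ with $\pi^{-1}\gamma_{p,q}$ separating $N=\{2,3,\dots,p+q\}$, which forces $\pi=\gamma_{p,q}$ or $\pi=\gamma_{p,q}(1,r)$. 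In every such $\pi$ the offending pair $\epsilon_2,\epsilon_3$ lands adjacent in a single cycle, so every term except the top one $\kappa_{p,q}$ dies by first-order $R$-diagonality or the inductive hypothesis, and $\kappa_{p,q}=0$ follows immediately.
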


\begin{proof}

Let us show first that if $\epsilon_1 + \cdots + \epsilon_{p
  + q} \not = 0 $ then $\kappa_{p,q}(u^{\epsilon_1},\ab
\dots, u^{\epsilon_{p+q}}) \ab = 0$. Suppose $\epsilon_1 +
\cdots + \epsilon_{p + q} \not = 0 $.  First note that for
all $\pi \in S_{NC}(p,q)$ some cycle of $\pi$, $(i_1, \dots,
i_k)$, must have $\epsilon_1 + \cdots + \epsilon_k \not = 0$
and thus $\phi_\pi(u^{\epsilon_1}, \dots, \ab
u^{\epsilon_{p+q}}) = 0$ because we assumed that $u$ is a
Haar unitary. Also for $(\cU, \pi) \in \cPS_{NC}(p,q)'$ we
have $\cU$ must have a block where the $\epsilon$'s do not
sum to $0$. If this block is a cycle, $(i_1, \dots, i_k)$,
of $\pi$ then $\phi(u^{\epsilon_{i_1}} \cdots
u^{\epsilon_{i_k}}) = 0$. If this block is the union of two
cycles $(i_1, \dots, i_k)$ and $(j_1, \dots, j_l)$ of $\pi$
then $\phi_2(u^{\epsilon_{i_1}} \cdots u^{\epsilon_{i_k}},
u^{\epsilon_{j_1}} \cdots\ab u^{\epsilon_{j_l}}) = 0$. In
either case we have $$\phi_{(\cU, \pi)}(u^{\epsilon_{1}},
\dots, u^{\epsilon_{p}}, u^{\epsilon_{p+1}}, \dots, \ab
u^{\epsilon_{p+q}}) = 0.$$ Since we have by
\cite[Def.~7.4]{cmss}
\begin{multline*}
\kappa_{p,q}(u^{\epsilon_{1}}, \dots, u^{\epsilon_{p}},
u^{\epsilon_{p+1}}, \dots, \ab u^{\epsilon_{p+q}}) \\
=
\sum_{(\cU, \pi) \in \cPS_{NC}(p,q)} 
\mu(\cV, \pi) \, \phi_{(\cU,
  \pi)}(u^{\epsilon_{1}}, \dots, u^{\epsilon_{p}},
u^{\epsilon_{p+1}}, \dots, \ab u^{\epsilon_{p+q}})
\end{multline*}
we have that $\kappa_{p,q}(u^{\epsilon_{1}}, \dots,
u^{\epsilon_{p}}, u^{\epsilon_{p+1}}, \dots, \ab
u^{\epsilon_{p+q}}) = 0$. From now on we assume that 
$\epsilon_1 + \cdots + \epsilon_{p+q} = 0$ and $p + q$
is even.

We shall prove the proposition by induction on $(p,
q)$. When $p = q = 1$, we have $$\kappa_{1,1}(u^{\epsilon_1},
u^{\epsilon_2}) = \phi_2(u^{\epsilon_1}, u^{\epsilon_2}) -
\kappa_2(u^{\epsilon_1}, u^{\epsilon_2}) = 0$$ because
$\phi_2(u^{\epsilon_1}, u^{\epsilon_2}) =
\kappa_2(u^{\epsilon_1}, u^{\epsilon_2}) = 1$.

When $p = q = 2$ we have to check that
$\kappa_{2,2}(u^{\epsilon_1}, u^{\epsilon_2},
u^{\epsilon_3}, u^{\epsilon_4}) = 0$ when either $\epsilon_1
= \epsilon_2$ or $\epsilon_3 = \epsilon_4$; in fact one
condition implies the other because $\epsilon_1 + \epsilon_2
+ \epsilon_3 + \epsilon_4 = 0$. Thus
\begin{eqnarray*}
2 &=& \phi_2(u^{\epsilon_1}
u^{\epsilon_2}, u^{\epsilon_3} u^{\epsilon_4}) = \sum_{\pi \in
S_{NC}(2,2)} \kappa_\pi(u^{\epsilon_1}, u^{\epsilon_2}, u^{\epsilon_3},
u^{\epsilon_4}) \\
&&\mbox{} + \sum_{(\cV, \pi) \in \cPS(p, q)'} \kappa_{(\cV, \pi)}
(u^{\epsilon_1}, u^{\epsilon_2}, u^{\epsilon_3}, u^{\epsilon_4})
\end{eqnarray*}
In the first term we will only have contributions from
permutations that have cycles of even length that alternate
between $u$ and $u^\ast$.  There are no permutations with
cycles of length four that alternate because we have two
$u$'s on one circle and two $u^*$'s on the other. There are
two permutations with two cycles of length two that
alternate between $u$ and $u^*$. Thus the contribution of
the first term is 2. The contribution of the second
term    is just $\kappa_{2,2}(u^{\epsilon_1},
u^{\epsilon_2}, u^{\epsilon_3}, u^{\epsilon_4})$ as all
other terms contain factors of $\kappa_{1,1}$ or
$\kappa_{2,1}$ which we have already shown to be 0. Hence
$\kappa_{2,2}(u^{\epsilon_1}, u^{\epsilon_2},
u^{\epsilon_3}, u^{\epsilon_4}) = 0$ when $\epsilon_1 =
\epsilon_2$ or $\epsilon_3 = \epsilon_4.$

Suppose that the proposition holds whenever $r < p$ and $q
\leq s$ or $r \leq p$ and $s < q$, and that either $p$ is odd,
$q$ is odd, or property (\ref{p-part}) fails. Then we can
find in either the first $p$ positions or in the last $q$
positions a cyclically adjacent pair of $\epsilon$'s of the
same sign. Recall that $\phi_2$ is tracial in each variable
so without loss of generality we may assume that $\epsilon_1
= 1$, $\epsilon_2 = -1$, and $\epsilon_3 = -1$.

Now $u^{\epsilon_1} u^{\epsilon_2} = 1$, so by
\cite[Prop.~7.8]{cmss}, $\kappa_{p-1,
  q}(u^{\epsilon_1} u^{\epsilon_2}, u^{\epsilon_3}, \dots
,\ab u^{\epsilon_{p+q}}) = 0$. Let $N = \{2, 3, 4, \dots,
p+q \}$.  By Proposition 5
\begin{eqnarray*}\label{inductivestep}\lefteqn{0=
\kappa_{p-1, q}(u^{\epsilon_1} u^{\epsilon_2},
u^{\epsilon_3}, \dots , u^{\epsilon_{p+q}})} \\
&=&
\mathop{\sum_{\pi \in S_{NC}(p,q)}}_%
{\pi^{-1}\gamma_{p,q}\ {\rm sep.\hbox{\tiny'} s}\ N}
\kappa_\pi(u^{\epsilon_1}, \dots, u^{\epsilon_{p+q}})\notag \\
&& \mbox{} +
\mathop{\sum_{(\cV, \pi) \in \cPS(p,q)'}}_%
{\pi^{-1}\gamma_{p,q}\ {\rm sep.\hbox{\tiny'} s}\ N}
\kappa_{(\cV, \pi)}(u^{\epsilon_1}, \dots, u^{\epsilon_{p+q}}) \notag
\end{eqnarray*}

Let us examine the condition on $\pi$ in each of the two
terms.  The requirement that $\pi^{-1} \gamma_{p,q}$
separates the points of $N$ means that either $\pi =
\gamma_{p,q}$ or $\pi = \gamma_{p,q}(1, r)$ for some $r \not
= 1$.

For the first term,  in order for
$\pi$ to connect the two cycles of $\gamma$, we must have $\pi =
\gamma_{p,q}(1, r)$ for some $r \in [p+1, p+q]$, . This means that
$\pi = (1, r+1, r+2, \dots , p+q, p+1, \dots , r, 2, 3, 4,
\dots, p)$ and thus the first order cumulant
$\kappa_\pi(u^{\epsilon_1}, \dots, u^{\epsilon_{p+q}})$ is 0
because $\epsilon_2$ and $\epsilon_3$ are adjacent.

In the second term we must have either $\pi = \gamma_{p,q}
(1, r)$ with $r \in [p]$ or $\pi = \gamma_{p,q}$. Let us
first suppose the former holds. Then $$\pi = (1, r+1, r+2,
\dots, p)(2, 3, 4, \dots , r)(p+1, p+2, \dots, p+q)$$ and
there are two possibilities for $\cV$. Either

\[\cV = \{ (1, r+1, r+2, \dots, p, p+1, \dots, p+q)(2, 3, \dots,
r) \}\] or
\[\cV = \{ (2, 3, \dots, r, p+1, \dots, p+q)(1, r+1, r+2, \dots, p)
\}.\]
In the first case
\begin{eqnarray*}
\kappa_{(\cV, \pi)}(u^{\epsilon_1}, \dots, u^{\epsilon_{p+q}})
&=&
\kappa_{r-1}(u^{\epsilon_2}, u^{\epsilon_3}, \dots,
u^{\epsilon_r}) \\
&& \mbox{}\times
\kappa_{p-r+1, q}(u^{\epsilon_1}, u^{\epsilon_{r+1}}, \dots,
u^{\epsilon_{p+q}}) = 0,
\end{eqnarray*}
because $\epsilon_2$ and $\epsilon_3$ are adjacent in the
first factor (unless $r=2$, but then we have a singleton so
we get 0 in this case also).

In the second case
\begin{eqnarray*}
\kappa_{(\cV, \pi)}(u^{\epsilon_1}, \dots, u^{\epsilon_{p+q}})
&=&
\kappa_{r-1,q}(u^{\epsilon_2}, u^{\epsilon_3}, \dots,
u^{\epsilon_r}, u^{\epsilon_{p+1}}, \dots, u^{\epsilon_{p+q}})
\\ && \mbox{} \times
\kappa_{p-r+1}(u^{\epsilon_1}, u^{\epsilon_{r+1}}, \dots,
u^{\epsilon_{p}}) = 0,
\end{eqnarray*}
by our induction hypothesis because $\epsilon_2$ and
$\epsilon_3$ are adjacent (unless $r = 2$ in which case
$r-1$ is odd and so we get 0 in this case also). We have now
shown that all the terms in Equation (\ref{inductivestep})
are zero except possibly the case where $\pi = \gamma_{p,q}$
and $\cV = 1_{p+q}$. Since the sum is 0 we must then also
have that this term is also 0, i.e.
\[
\kappa_{p,q}(u^{\epsilon_1}, \dots, u^{\epsilon_{p+q}}) =
\kappa_{(1_{p+q}, \gamma_{p,q})}
(u^{\epsilon_1}, \dots, u^{\epsilon_{p+q}}) =0 
\] as required. 
\end{proof}

\begin{notation}
We put
$$c_{m,n}:=\#(S_{NC}(m,n)) =  \frac{2mn}{m+n} \binom{2m-1}{m} \binom{2n-1}{n}.$$
\end{notation}

\begin{proposition}
Let $p=2m$ and $q=2n$ be even integers and
$\epsilon_1,...,\ab \epsilon_{p+q} \in \{-1,1\}$
alternating. Then
$\kappa_{p,q}(u^{\epsilon_1},...,u^{\epsilon_{p+q}})=(-1)^{m+n}c_{m,n}$.
\end{proposition}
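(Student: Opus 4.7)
My plan proceeds in three steps. First, by Proposition \ref{prop:r-diagonality-haar-unitary} the element $u$ is second order $R$-diagonal, and the alternation hypothesis on the $\epsilon_i$, combined with the traciality of $\phi_2$ in each argument (which permits cyclic shifts within each circle) and the distributional symmetry $u \leftrightarrow u^*$ for a Haar unitary, collapses all admissible alternation patterns to a single value of the second order determining sequence of Definition \ref{def3},
\[
\beta_{m,n}^{(u)} := \kappa_{2m,2n}(u, u^*, u, u^*, \dots, u, u^*).
\]
So the task reduces to evaluating $\beta_{m,n}^{(u)}$ in closed form, up to an overall sign.

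The key input is the identity $uu^* = 1$. Since $\phi_2$ vanishes whenever either argument is a scalar, every second order cumulant $\kappa_{p,q}(uu^*, \dots, uu^*)$ is zero. Applying Theorem \ref{MT2} to $a = u$ gives
\[
0 = \sum_{(\cV, \pi) \in \cPS_{NC}(m,n)} \beta_{(\cV, \pi)}^{(u)},
\]
and isolating the top element $(1_{m+n}, \gamma_{m,n})$, whose contribution is exactly $\beta_{m,n}^{(u)}$, produces a recursion expressing $\beta_{m,n}^{(u)}$ in terms of strictly smaller second order values together with the first order ones. The first order sequence is fixed by M\"obius inversion of the analogous first order identity $\kappa_n(uu^*, \dots, uu^*) = \delta_{n,1}$, which gives $\beta_k^{(u)}$ equal to $(-1)^{k-1}$ times the $(k-1)$-st Catalan number.

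To resolve this recursion globally I would pass to generating functions via Theorem \ref{series xx}. Since also $\kappa_{p,q}^{(uu^*)} = 0$ for all $p, q \geq 1$, the series $C(z,w)$ vanishes and the identity reduces to
\[
B(C(z), C(w)) = -\frac{1}{C'(z)\, C'(w)} \cdot \frac{\partial^2}{\partial z\, \partial w} \log \frac{C(z) - C(w)}{z - w},
\]
with $C(z) = (z+1)/z^2$ in closed form. Inverting $s = C(z)$ determines $B(z,w)$ as an explicit algebraic expression whose bidegree-$(m,n)$ coefficient (with the normalizing factor $1/(mn)$ from the definition of $B(z,w)$) yields $\beta_{m,n}^{(u)}$. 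The main obstacle is the very last step: identifying this coefficient with the closed form $(-1)^{m+n} c_{m,n}$. This identification is essentially a generating-function manipulation that recovers the enumerative meaning of $c_{m,n}$ as an annular non-crossing count in the sense of \cite[\S 5]{mst}; everything preceding it is structural and automatic from the machinery developed in earlier sections.
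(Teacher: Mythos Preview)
Your approach is structurally identical to the paper's: both use second order $R$-diagonality of $u$, the identity $uu^*=1$, and Theorem \ref{MT2} to reduce the problem to determining $\beta^{(u)}$ as the convolution inverse of the zeta function over $\cPS_{NC}$. The only divergence is at the final step. The paper observes that the relation $\delta = \beta^{(u)} * \zeta$ (first and second order simultaneously) says precisely that $\beta^{(u)}$ is the M\"obius function of the poset of partitioned permutations, and then cites \cite[Thm.~5.24]{cmss} where this M\"obius function was already computed to be $(-1)^{m+n}c_{m,n}$. You instead propose to redo that computation from scratch via Theorem \ref{series xx}, and you correctly identify the coefficient extraction as the nontrivial part. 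This would work, but it is exactly the content of the cited result; recognizing the M\"obius function and invoking \cite{cmss} is the intended shortcut that replaces your last paragraph entirely.
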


\begin{proof}
We have just shown that $u$ is second order $R$-diagonal and
since $uu^*=1$ we can use Theorem \ref{MT2} to calculate the
cumulants of $u$. Indeed, the cumulants of $1$ are given by
$\kappa_1=1$ , $\kappa_n=0$ for $n>1$ and $\kappa_{p,q}=0$
for all $p,q\geq1$. Thus the determining sequence of $u$ is
given by the inversion formula \eqref{equation:beta_zeta} $\delta
= \beta^{(u)} * \zeta$ (see page
\pageref{equation:beta_zeta}). Hence $\beta^{(u)}$ is the M\"obius
function calculated in \cite[Thm.~5.24]{cmss}, so
$$\kappa_{p,q} (u^{\epsilon_1},...,u^{\epsilon_{p+q}})=
(-1)^{m+n}c_{m,n}.$$
\end{proof}

\begin{proposition}
Suppose that $r$ and $u$ are second order $*$-free and that
$u$ is a second order Haar unitary. Then $r$ is second order
$R$-diagonal if and only if $r$ and $ur$ have the same
second order $*$-distribution.
\end{proposition}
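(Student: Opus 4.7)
The plan is to treat the two implications separately, using Theorem \ref{MT1}, Theorem \ref{MT2}, and the trace properties of $\phi$ and $\phi_2$.

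For the forward direction, assume $r$ is second order $R$-diagonal. First I would invoke Theorem \ref{MT1} with $a=u$ and $b=r$; this applies because a second order Haar unitary is itself second order $R$-diagonal by Proposition \ref{prop:r-diagonality-haar-unitary}. Thus $ur$ is second order $R$-diagonal. Since by Definition \ref{defi1} all non-alternating $*$-cumulants of a second order $R$-diagonal element vanish, the second order $*$-distribution of such an element is completely encoded by its determining sequences $\{\beta_n\}$ and $\{\beta_{p,q}\}$. Hence it suffices to show these sequences coincide for $r$ and for $ur$. For this, use the key identity $(ur)(ur)^*=u(rr^*)u^*$ and the traciality of $\phi$ and of $\phi_2$ in each variable to obtain
\[
\phi\bigl(((ur)(ur)^*)^n\bigr)=\phi((rr^*)^n),\quad
\phi_2\bigl(((ur)(ur)^*)^p,((ur)(ur)^*)^q\bigr)=\phi_2((rr^*)^p,(rr^*)^q)
\]
for all $n,p,q$. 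Therefore $(ur)(ur)^*$ and $rr^*$ have identical first- and second-order moments, hence identical cumulants $\kappa_n^{((ur)(ur)^*)}=\kappa_n^{(rr^*)}$ and $\kappa_{p,q}^{((ur)(ur)^*)}=\kappa_{p,q}^{(rr^*)}$. Applying Theorem \ref{MT2} (and its first-order analogue Theorem \ref{theorem:ns1:2}) to each side and inverting via M\"obius inversion over $\NC(n)$ and over $\cPS_{NC}(p,q)$ respectively yields $\beta_n^{(r)}=\beta_n^{(ur)}$ and $\beta_{p,q}^{(r)}=\beta_{p,q}^{(ur)}$, which proves equality of the second order $*$-distributions.

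For the reverse direction, apply Theorem \ref{MT1} once more with $a=u$ and $b=r$; note that the hypotheses of that theorem require only $R$-diagonality of $a$, not of $b$, so the conclusion that $ur$ is second order $R$-diagonal holds unconditionally whenever $u$ is a second order Haar unitary $*$-free from $r$. Consequently every $*$-cumulant of $ur$, both first and second order, vanishes outside of the alternating patterns prescribed in Definition \ref{defi1}. Since $*$-cumulants are determined by $*$-moments and $*$-fluctuation moments via the moment-cumulant formulas of Section \ref{sec:second_order_cumulants}, the hypothesis that $r$ and $ur$ share the same second order $*$-distribution forces all $*$-cumulants of $r$ to match those of $ur$, and therefore to vanish outside the alternating patterns. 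This is precisely the defining property of a second order $R$-diagonal element.

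The only subtle point to verify carefully is that the determining sequences really do determine the full second order $*$-distribution of an $R$-diagonal element; this, however, is immediate from Definition \ref{defi1}, which forces every non-alternating $*$-cumulant to vanish and identifies the remaining cumulants as $\pm$ copies of $\beta_n$ or $\beta_{p,q}$. With that observation in hand, the proof is a clean combination of Theorems \ref{MT1} and \ref{MT2} with traciality of $\phi$ and $\phi_2$, and no new combinatorics on annular non-crossing permutations is required.
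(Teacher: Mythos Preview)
Your proposal is correct and follows essentially the same route as the paper's proof: both directions hinge on Theorem \ref{MT1} (applied with $a=u$, $b=r$, using that a second order Haar unitary is $R$-diagonal), and the forward direction then reduces to showing that $rr^*$ and $(ur)(ur)^*=u(rr^*)u^*$ have the same first- and second-order cumulants, which follows from traciality of $\phi$ and $\phi_2$. The paper states this last step more tersely, while you spell out the M\"obius inversion explicitly; but the underlying argument is the same.
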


\begin{proof}
Suppose that $r$ is second order $R$-diagonal. We have
already seen in Proposition
\ref{prop:r-diagonality-haar-unitary} that $u$ is second
order $R$-diagonal. By Theorem \ref{MT1} we have that $ur$
is also second order $R$-diagonal. The second order
$*$-distributions of $r$ and $ur$ are given by their
$*$-cumulants and by Theorem \ref{MT2} these are given by
the cumulants of $rr^*$ and $urr^*u^*$ respectively. Since
both $\phi$ and $\phi_2$ are tracial, these cumulants are
equal. Thus $r$ and $ur$ have the same second order
$*$-distribution.

Conversely suppose that $r$ and $ur$ have the same second
order $*$-distribution. By Theorem \ref{MT1}, $ur$ is second
order $R$-diagonal. Thus $r$ is second order $R$-diagonal.
\end{proof}

\subsection{Products of Semicircular and circular operators}

Following \cite[\S 4]{mst}, a random variable $s$ in a second
order non-commutative probability space is called a
\textit{second order semicircular operator} if its first
order cumulants satisfy $\kappa_n(s,s,...,s)=0$ for all
$n\neq2$ and $\kappa_2(s,s)=1$, and for all $p$ and $q$ the
second order cumulants $\kappa_{p,q}$ are 0.  This operator
appears as the limit of GUE (Gaussian unitary ensemble) random matrices as the size
tends to infinity. See \cite[Thm.~3.5]{ms}.

\begin{example}[Square of Semicircular] \label{semicircle}
It follows from the definition that the first order
determining sequence of the semicircular operator is given
by $\beta_{1}=1$ and $\beta_{n}=0$ for $n>0$, The second
order determining sequence is given by $\beta_{k,k}=k$ and
$\beta_{p,q}=0$ if $p\neq q$, since the only terms appearing
in formula \eqref{determining 1} (see page
\pageref{determining 1}), are ``spoke'' diagrams (see
Fig. \ref{fig:spoke_diagram}) which are parity
preserving. Theorem \ref{MT3} now gives the second order
cumulants of $s^2$.
\begin{eqnarray*}\lefteqn{%
\kappa_{p,q}(s^2,\dots,s^2) }\\
& = &
\sum_{\pi \in S_{NC}(p,q)}\beta_\pi
+
\sum_{(\mathcal{V},\pi) \in \cPS_{NC}(p,q)'}\beta_{(\mathcal{V},\pi)}
= 
\sum_{\pi \in \cPS_{NC}(p,q)'}\beta_{(\mathcal{V},\pi)}
\end{eqnarray*}
because there are no partitions in $S_{NC}(p,q)$ with only
blocks of size one. Now, the only non-vanishing terms in the
last sum are of the form $\beta_{k,k}\beta_{1}^{p+q-2k}$.
Indeed, for $\beta_{(\cV, \pi)} \not = 0$ we must have $\pi
= \pi_1 \times \pi_2 \in NC(p) \times NC(q)$, such that both 
$\pi_1$ and $\pi_2$ have one cycle with $k$ elements and all
others singletons. Then $\cV$ will have one block that is the 
union of the cycle of $\pi_1$ with $k$ elements and the cycle
of $\pi_2$ which also has $k$ elements;   all other blocks
$\cV$ are singletons.
Moreover, there are $\binom{p}{k}\binom{q}{k}$ ways of choosing
the two cycles with $k$ elements. For such a $(\cV, \pi)$ we have
$\beta_{(\cV, \pi)} =  \beta_{k,k}\, \beta_{1}^{2p+2q-2k}=k$. Thus,
\begin{eqnarray*}
\kappa_{p,q}(s^2, \dots, s^2) &=& \sum_{(\mathcal{V},\pi)
  \in PS_{NC}(p,q)'}\beta_{(\mathcal{V},\pi)} =
\sum_{k>0}\binom{p}{k}\binom{q}{k}\beta_{k,k}
\\ &=&\sum_{k>0}k\binom{p}{k}\binom{q}{k}=p\binom{p+q-1}{p}.
\end{eqnarray*}
The last equality follows by the Chu-Vandermonde  formula.

\begin{figure}
\includegraphics{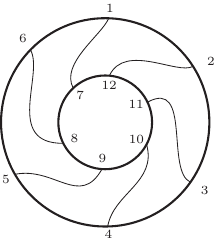}
\caption{\small\label{fig:spoke_diagram} A \textit{spoke
    diagram}, this means a non-crossing annular pairing in
  which all pairs connect the two cycles of
  $\gamma_{p,q}$. This example is \textit{parity preserving}
  because the connected points have the same parity.}
\end{figure}

Another way to derive these cumulants is using the series
from Theorem \ref{series xx}. Indeed, $B(z)=1$ implies that
$C(z)=1/(z-1)$ and then $C'(z)=-1/(z-1)^2.$ Also,
\begin{eqnarray*}
B(z,w)&=&\frac{1}{zw}\sum_{p,q\geq1}\beta_{p,q}
z^{p}w^{q}=\frac{1}{zw}\sum_{n\geq1}nz^{n}w^{n}=\frac{1}{(1-zw)^2}.
\end{eqnarray*}
and
\[
\frac{\partial^2}{\partial z\partial w}\log
\left(\frac{C(z)-C(w)}{z-w}\right)=\frac{\partial^2}{\partial
  z\partial w}\log \left(\frac{1}{(z-1)(w-1)}\right)=0.
\]
Hence, 
\begin{eqnarray*}
C(z,w)&=&C'(z)
C'(w)B(C(z),C(w))\\&=&\frac{1}{(z-1)^2(w-1)^2}
\frac{1}{(1-(z-1)^{-1}(w-1)^{-1})^2}\\&=&
\frac{1}{((z-1)(w-1)-1)^2} =\frac{1}{(zw-z-w)^2}     \\
&=&\sum_{p,q \geq 1}   p\binom{p+q-1}{p} z^{-(p+1)} w^{-(q+1)} 
\end{eqnarray*}
\end{example}

\begin{example}[Circular operator]\label{circle}
Consider $s_1$ and $s_2$ second order free semicircular
operators. We call $c=\frac{s_1+is_2}{\sqrt{2}}$ a (second
order) circular operator.  The operator $c$ is a second
order $R$-diagonal.  Indeed, since $s_1$ and $s_2$ are
second order free, their mixed free cumulants vanish, also
the second order cumulant of $s_1$ and $s_2$ vanish, thus by
linearity the same is true for $c$. That is,
$\kappa_{p,q}(c^{(\epsilon_1)},...,c^{(\epsilon_{p+q})})=0$ for
all $\epsilon_1,...,\epsilon_{p+q}\in\{\pm 1\}$.  On the other
hand, it is well known $\kappa_2(c,c^*)=\kappa_2(c^*,c)=1$ are
the only first order vanishing cumulants of $c$ (see \cite[Ex.~11.23]{ns2}). Hence $c$
is second order $R$-diagonal.
\end{example}

In \cite{dp}, G.~Dubach and Y.~Peled computed the
fluctuation moments of some words in second order $*$-free
second order circular operators. More precisely, let $c_1,
\dots, c_s$ be second order circular operators and second
order $*$-free. Dubach and Peled computed the fluctuation
moments of the form $\phi_2(w^k, (w^*)^l)$, where $w =
c_{i_1} \cdots c_{i_k}$ is a word in $c_1, \dots, c_s$. In
the next few examples, we will present the second order free
$*$-cumulants of $w$ and the second order free cumulants of
$w^*w$, when $i_1, \dots, i_k$ are distinct. See Example
\ref{exa:uau2}, for a short discussion of why our method
doesn't work more generally.

\begin{example}[$cc^*$ for a circular operator $c$]\label{examp:square_circular}
This is a continuation of Examples \ref{semicircle} and
\ref{circle}. Let ${c}$ be a circular element and $s$ a
semicircular operator as in Examples \ref{semicircle} and
\ref{circle}. Now recall that $cc^*$ and $s^2$ both have a
Poisson distribution with respect to $\phi$,
i.e. $\phi((cc^*)^n)=\phi((s^2)^n)=\frac{1}{n+1}\binom{2n}{n}$. So
one might expect that $cc^*$ and $s^2$ have the same
distribution in the second order level. We see that this is
not the case.

Indeed, the determining sequences of $c$ are given by
$\beta_{1}=1$ and $\beta_{n}=0$ for $n>0$ and
$\beta_{p,q}=0$, for all $p$ and $q$. This already shows our
claim, since $\beta_{n,n}=n$ in the case for semicircle of
Example \ref{semicircle}.  Moreover, from formula
(\ref{formula:main1}) it is readily seen that the second
order cumulants of $cc^*$ are all
zero. i.e. $\kappa_{p,q}(cc^*,cc^*,...,cc^*)=0$.

Since the first order cumulants of $cc^*$ are all $1$. Then
the $(p,q)$-fluctuation moments of $cc^*$ count the number of
elements in $S_{NC}(p,q)$. This means the \begin{eqnarray*}\phi_2((cc^*)^p, (cc^*)^q)=|S_{NC}(p,q)|
= \frac{pq}{2(p+q)}\binom{2p}{p}\binom{2q}{q}.
\end{eqnarray*}The last equality comes from the fact that
$|S_{NC}(p,q)|=\frac{1}{2}|NC_2(2p, 2q)|$ where $NC_2(2p,
2q)$ is the set of non-crossing annular pairings (see
\cite[Cor.~6.7]{mn}). 

One way to interpret the calculation above is that
$$\kappa^{(cc*)}=\zeta \qquad~ \mathrm{and}\qquad
\phi_2((cc^*)^p, (cc^*)^q) =\zeta^{*2}(1_{p+q}, \gamma_{p,q}).
$$

\end{example}

\begin{example}[Products of free circular operators]
\label{ex:Product of two circular operators}
Let $c_1$ and $c_2$ be a pair of second order circular
elements and suppose they are second order free. Let
$h=c_1c_2$. Let us recall some facts from \cite[Lecture
  15]{ns2} about first order case $R$-diagonal operators.

Let $a$ be $R$-diagonal. As we are working in a tracial
non-commutative probability space, we have from
\cite[Prop.~15.6]{ns2} that
\[
\kappa^{(aa^*)}_n = \sum_{\pi \in NC(n)} \beta^{(a)}_\pi
\]
which can be written in terms of multiplicative functions
(c.f. \cite[Lecture 10]{ns2}) as $\kappa^{(aa^*)} =
\beta^{(a)} * \zeta$. Now suppose that $a_1$ and $a_2$ are
$*$-free and $R$-diagonal. We have from
\cite[Ex.~15.24]{ns2} that
\begin{align*}
\beta^{(a_1a_2)}_n &= \sum_{\pi \in NC(n)} \beta^{(a_1)}_\pi
\sum_{\sigma \leq K(\pi)} \beta^{(a_2)}_\sigma = \sum_{\pi
  \in NC(n)} \beta^{(a_1)}_\pi ( \beta^{(a_2)} * \zeta)(0_n,
\Kr(\pi)) \\ & = \beta^{(a_1)} * \beta^{(a_2)} * \zeta( 1_n)
= (\beta^{(a_1)} * \beta^{(a_2)} * \zeta)_n.
\end{align*}
Hence $\beta^{(a_1a_2)} = \beta^{(a_1)} * \beta^{(a_2)} *
\zeta$. Thus $\kappa^{(a_1a_2a_2^*a_1^*)} = \beta^{(a_1)} *
\beta^{(a_2)} * \zeta * \zeta$.

Now let us return to the case that $c_1$ and $c_2$ are
$*$-free and circular.  Since $\kappa_{2n}(c_1, c_1^*,
\dots, c_1, c_1^*) = 0$ for $n \geq 2$ and $\kappa_2(c_1,
c_1^*) = 1$ we have $\beta^{(c_1)}_1 = 1$ and
$\beta^{(c_1)}_n = 0$ for $n \geq 2$. In the language of
multiplicative functions we have $\beta^{(c_1)} =
\delta$. Thus $\kappa^{(c_1c_2c_2^*c_1^*)} = \delta * \delta
* \zeta * \zeta = \zeta^{*2}$.

Now suppose we have $k$ circular operators, $c_1, \dots,
c_k$, which are $*$-free. Let $c = c_1 \cdots c_k$, then
$\kappa^{(cc^*)} = \zeta^{*k}$, which are in turn given by
the Fuss-Catalan numbers (see \cite[Ex.~10.24]{ns2}).

We have that $h = c_1 c_2$ is $R$-diagonal so the only
non-vanishing cumulants of $h$ are those of the form
$\kappa_{2n}(h,h^*,\dots,h,h^*)$. These are equal to $1$ for
all $n$. Let us briefly review how to see this.
\[
\kappa_{2n}(h,h^*,\dots,h,h^*)
=
\sum_{\pi} \kappa_\pi(c_1, c_2, c_2^*, c_1^*, \dots, 
c_1, c_2, c_2^*, c_1^*)
\]
where the sum runs over all $\pi \in NC(2n)$ such that $\pi
\vee \rho = 1_{4n}$ and $\rho = \{ (1, 2), \dots, (4n-1,
4n)\}$. By freeness all blocks of $\pi$ can contain only
$\{c_1, c_1^*\}$ or only $\{c_2, c_2^*\}$. Moreover since
$c_1$ and $c_2$ are circular the blocks of $\pi$ must be of
size $2$ and can only connect $c_i$ to $c_i^*$. Thus $\pi$
must be a pairing. In the diagram below we represent $c_1$,
$c_1^*$, $c_2$, and $c_2^*$ respectively by $1$,
$\overline{1}$, $2$, and $\overline{2}$. Let us start with
the leftmost $1$. If it were to be connected any
$\overline{1}$ other than the rightmost then the condition
$\pi \vee \rho = 1_{2kn}$ would be violated. Similarly if
the leftmost $2$ were connected to any $\overline{2}$ other
than the $\overline{2}$ immediately to the right of $2$ then
the condition $\pi \vee \rho = 1_{2kn}$ would be
violated. Continuing in this way we see that there is only
one such $\pi$ that satisfies the two conditions
\[
\kappa_\pi(c_1, c_2, c_2^*, c_1^*, \dots, 
c_1, c_2, c_2^*, c_1^*) \not= 0
\]
and $\pi \vee \rho = 1_{2kn}$. Moreover since $\kappa_2(c_i,
c_i^*) = \kappa_2(c_i^*, c_i) = 1$, we have that
$$\kappa_{2n}(h,h^*,\dots,h,h^*) = 1$$ as claimed.
\medskip

\hfill
\begin{tikzpicture}[anchor=base, baseline]
\node[above] at (-1.0,1.25) {$\rho$};
\node[above] at (-1.0,0.25) {$\pi$};
\node[above] at (0.00,.75) {$1$};
\node[above] at (0.50,.75) {$2$};
\node[above] at (1.25,.75) {$\overline{2}$};
\node[above] at (1.75,.75) {$\overline{1}$};
\node[above] at (2.50,.75) {$1$};
\node[above] at (3.00,.75) {$2$};
\node[above] at (3.75,.75) {$\overline{2}$};
\node[above] at (4.25,.75) {$\overline{1}$};
\node[above] at (5.00,.75) {$1$};
\node[above] at (5.50,.75) {$2$};
\node[above] at (6.25,.75) {$\overline{2}$};
\node[above] at (6.75,.75) {$\overline{1}$};
\node[above] at (7.50,.75) {$1$};
\node[above] at (8.00,.75) {$2$};
\node[above] at (8.75,.75) {$\overline{2}$};
\node[above] at (9.25,.75) {$\overline{1}$};
\draw  (0.00 , 1.5) -- (0.00, 1.75) -- (0.50, 1.75) -- (0.50, 1.5);
\draw  (1.25 , 1.5) -- (1.25, 1.75) -- (1.75, 1.75) -- (1.75, 1.5);
\draw  (2.50 , 1.5) -- (2.50, 1.75) -- (3.00, 1.75) -- (3.00, 1.5);
\draw  (3.75 , 1.5) -- (3.75, 1.75) -- (4.25, 1.75) -- (4.25, 1.5);
\draw  (5.00 , 1.5) -- (5.00, 1.75) -- (5.50, 1.75) -- (5.50, 1.5);
\draw  (6.25 , 1.5) -- (6.25, 1.75) -- (6.75, 1.75) -- (6.75, 1.5);
\draw  (7.50 , 1.5) -- (7.50, 1.75) -- (8.00, 1.75) -- (8.00, 1.5);
\draw  (8.75 , 1.5) -- (8.75, 1.75) -- (9.25, 1.75) -- (9.25, 1.5);
\draw  (0.00 , 0.75) -- (0.00, 0.25) -- (9.25, 0.25) -- (9.25, 0.75);
\draw  (0.50 , 0.75) -- (0.50, 0.50) -- (1.25, 0.50) -- (1.25, 0.75);
\draw  (1.75 , 0.75) -- (1.75, 0.50) -- (2.50, 0.50) -- (2.50, 0.75);
\draw  (3.00 , 0.75) -- (3.00, 0.50) -- (3.75, 0.50) -- (3.75, 0.75);
\draw  (4.25 , 0.75) -- (4.25, 0.50) -- (5.00, 0.50) -- (5.00, 0.75);
\draw  (5.50 , 0.75) -- (5.50, 0.50) -- (6.25, 0.50) -- (6.25, 0.75);
\draw  (6.75 , 0.75) -- (6.75, 0.50) -- (7.50, 0.50) -- (7.50, 0.75);
\draw  (8.00 , 0.75) -- (8.00, 0.50) -- (8.75, 0.50) -- (8.75, 0.75);
\end{tikzpicture}
\hfill\hbox{}

Since $c_1$ and $c_2$ are second order $R$-diagonal, so is
$h$. In Example \ref{circle} we saw that the second order
cumulants of $c_1$ and $c_2$ are all zero and the only
non-vanishing cumulants of first order are
\begin{equation}\label{eq:circular_cumulants}
\kappa_2(c_1,c_1^*) = \kappa_2(c_1^*,c_1) =
\kappa_2(c_2,c_2^*) = \kappa_2(c_2^*,c_2)=1.
\end{equation}
Let us show that all second order cumulants of $h$ are
0. Since $h$ is second order $R$-diagonal we have that the
only possible non-vanishing cumulants are
\[
\kappa_{2p,2q}(h, h^*, \dots, h, h^*)
\mbox{\ and\ }
\kappa_{2p,2q}(h^*, h, \dots, h^*, h).
\]
We shall show the first of these is 0; the proof for the
second follows, because $h^*$ is also the product of circular
operators. To compute $\kappa_{2p,2q}(h, h^*, \dots, h,
h^*)$ we use Proposition \ref{mst} to write
\[
\kappa_{2p,2q}(h, h^*, \dots, h, h^*) = \kern-1em\mathop{\sum_{\pi
    \in S_{NC}(4p, 4q)}}_{\pi^{-1}\gamma_{4p, 4q}
  \mathrm{\ sep.\ }N} \kern-1em\kappa_\pi (c_1, c_2, c_2^*, c_1^*,
\dots, c_1, c_2, c_2^*, c_1^*),
\]
with $N = \{2,4,6, \dots, 4p + 4q\}$.  Note that there are
no terms involving second order cumulants as the second
order $*$-cumulants of $c_1$ and $c_2$ vanish. For a $\pi
\in S_{NC}(4p, 4q)$ to contribute to this sum, $\pi$ must be
a pairing because the first order $*$-cumulants of $c_1$ and
$c_2$ are all 0 except those in Equation
(\ref{eq:circular_cumulants}). We claim that $\kappa_\pi
(c_1, c_2, c_2^*, c_1^*, \dots, c_1, c_2, c_2^*, c_1^*)\ab =
0$ whenever $\pi^{-1}\gamma_{4p, 4q}$ separates the points
of $N$. Such a $\pi$ must have a through string. Suppose it
connects a $c_1$ on one circle to a $c_1^*$ on the other
circle. This means that there are $j$ and $k$ such that
$\pi(4j+1) = 4k$. But then $\pi^{-1}\gamma_{4p, 4q}(4j) =
4k$, which contradicts the assumption that
$\pi^{-1}\gamma_{4p, 4q}$ sep. $N$. The case where a through
string connects a $c_2$ on one circle to a $c_2^*$ on the
other is similar. Hence $h$ is a second-order $R$-diagonal
with \textit{all} second order cumulants equal to $0$.

We now calculate the second order cumulants of $hh^*$. From
the discussion above we have the determining sequences of
$h$ are given by $\beta_{n}=1$ and $\beta_{p,q}=0$, for all
$p$ and $q$. From Equation (\ref{formula:main1}) we have
that
\begin{eqnarray*}\lefteqn{
\kappa_{p,q}(hh^*, \dots, hh^*) 
=
\sum_{(\mathcal{V},\pi) \in PS_{NC}(p,q)'}\beta_{(\mathcal{V},\pi)} 
+ \sum_{\pi \in S_{NC}(p,q)}\beta_\pi }
\\
& = &
\sum_{\pi \in S_{NC}(p,q)}1=|S_{NC}(p,q)|
= 
\frac{pq}{2(p+q)}\binom{2p}{p}\binom{2q}{q}.
\end{eqnarray*}

One way to interpret the calculation above is that
$$\kappa^{(hh^*)}=\zeta^{*2}$$ and consequently 
 $$\phi_2((hh^*)^p, (hh^*)^q) =\zeta^{*3}(1_{p+q}, \gamma_{p,q})=
\frac{2}{3+1} \frac{p q}{p + q} \binom{3p}{p} \binom{3q}{q}.
$$

The last equality will be explained in further detail for
all powers, $\zeta^{*k}$, in Remark
\ref{remark:bousquet_melou}.

Before going to more general examples. Let us notice that
the determining sequence of $h$ coincides with the cumulants
of $c_1c_1^*$. This is of course, not a coincidence as we
will see in Remark \ref{Remark conjugation}.

\end{example}

\begin{example}[Conjugation by a free circular element] \label{conjugating}
The $c$ be a second order circular operator as in Example
\ref{circle} which is second order $*$-free from $a$. We are
interested in the second order cumulants of $cac^*$.  Recall
that for the first order cumulants we have
\[
\kappa_n(cac^*, \dots, cac^*) = \phi(a^n).
\]
%%%%%%%%%%%%%%%%%%%%   B O X  1  %%%%%%%%%%%%%%%%%%%%%%%%%%%%%%%%

\setbox1=\hbox{%
\begin{tikzpicture}[anchor=base, baseline]
\node[above] at (-0.75,0.25) {$\pi = \mbox{}$};
\node[above] at (0.00,.75) {$c$};
\node[above] at (0.50,.75) {$a$};
\node[above] at (1.00,.75) {$c^*$};
\node[above] at (1.50,.75) {$c$};
\node[above] at (2.00,.75) {$a$};
\node[above] at (2.50,.75) {$c^*$};
\node[above] at (3.00,.75) {$c$};
\node[above] at (3.50,.75) {$a$};
\node[above] at (4.00,.75) {$c^*$};
\node[above] at (4.50,.75) {$c$};
\node[above] at (5.00,.75) {$a$};
\node[above] at (5.50,.75) {$c^*$};
\node[above] at (6.25,.75) {$\cdots$};
\node[above] at (6.75,.75) {$c$};
\node[above] at (7.25,.75) {$a$};
\node[above] at (7.75,.75) {$c^*$};
\node[above] at (8.20,.75) {$c$};
\node[above] at (8.75,.75) {$a$};
\node[above] at (9.25,.75) {$c^*$};
\node[above] at (9.50,.50) {.};
\draw  (0.00 , 0.75) -- (0.00, 0.25) -- (9.25, 0.25) -- (9.25, 0.75);
\draw  (1.00 , 0.75) -- (1.00, 0.50) -- (1.50, 0.50) -- (1.50, 0.75);
\draw  (2.50 , 0.75) -- (2.50, 0.50) -- (3.00, 0.50) -- (3.00, 0.75);
\draw  (4.00 , 0.75) -- (4.00, 0.50) -- (4.50, 0.50) -- (4.50, 0.75);
\draw  (5.50 , 0.75) -- (5.50, 0.50) -- (5.75, 0.50);
\draw  (6.50, 0.50) -- (6.75, 0.50) -- (6.75, 0.75);
\draw  (7.75 , 0.75) -- (7.75, 0.50) -- (8.25, 0.50) -- (8.25, 0.75);
\end{tikzpicture}}

%%%%%%%%%%%%%%%%%%%%%%%%%%%%%%%%%%%%%%%%%%%%%%%%%%%%%%%%%%%%%%%
To see this use the formula for cumulants with products for entries
\[
\kappa_n(cac^*, \dots, cac^*) =
\sum_{\pi \in NC(3n)} \kappa_\pi(c, a, c^*, \dots, c , a, c^*)
\]
here the sum runs over all $\pi$ such that $\pi^{-1}\gamma_{3n}$ separates the points on $\{3, 6, 9, \dots, 3n\}$. Recall that $\kappa_2(c, c^*) = \kappa_2(c^*, c) = 1$ and all other cumulants vanish. Thus, for $$\kappa_\pi(c, a, c^*, \dots, c , a, c^*) \not = 0,$$ $\pi$ must pair a $c$ with a $c^*$. There is only one way this can be achieved and also satisfy the requirement that  $\pi^{-1}\gamma_{3n}$ separates the points on $\{3, 6, 9, \dots, 3n\}$:

\noindent\leavevmode\hfill\box1\hfill\hbox{}\break
As this exposes all the $a$'s we get the sum: $\ds\sum_{\pi \in NC(n)} \kappa_\pi(a, \dots, a) = \phi(a^n)$, as claimed. Thus for the cumulant generating function 
\[
C_{cac^*}(z) = \ds\sum_{n=0}^\infty \kappa_n(cac^*, \dots, cac^*) z^n
\] we have
\[ C_{cac^*}(z) = z^{-1}G_a(z^{-1})
\]
where $G_a(z) = \sum_{n=0}^\infty \phi(a^n) z^{-(n+1)}$ is
the Cauchy transform of $a$.

Let us show that a similar formula holds in the second order
case. 

\begin{theorem}\label{thm:free_conjugation_circular}
Let $c_1, \dots, c_k$ be second order circular operators and
suppose that $\{c_1, c_1^*\}$, $\{c_2, c_2^* \}$, \dots,
$\{c_k, c_k^*\}$, and $\{ a\}$ are second order free in a
second order probability space $(\cA, \phi, \phi_2)$. Then
\[
\kappa^{(c_1 c_2 \cdots c_k a c_k^* \cdots\ab c_2^* c_1^*)}
= \kappa^{(a)} * \zeta^{*k}.
\]
\end{theorem}

\begin{proof}

It suffices to prove this for $k = 1$ and use induction; for
this we use the formula for products as arguments directly.
\[
\kappa_{p,q}(cac^*, ...,cac^*)=\sum_{(\cV, \pi) \in
\mathcal{PS}_{NC} (kp,kq)} \kappa_{(\cV, \pi)} (c,a,c^*, ...,,...,cac^*),
\]
where the sum runs over permutations such that
$\pi^{-1}\gamma_{kp,kq}$ separates the points
$\{3,6,..,3(p+q)\}$.

Let us analyze the possible permutations in the sum. Recall
that the only non-vanishing $*$-cumulants of $c$ are
$\kappa_2(c,c^*)$ and $\kappa_2(c^*, c)$. So in $\pi$, any
$c$ must be connected to a $c^*$. For this, in principle
there are two possibilities:

$(i)$ $c$ and $c^*$ are on the same circle, as in Figure
\ref{fig:cac} (left), or

$(ii)$ $c$ an $c^*$ are in opposite circles, as in Figure
\ref{fig:cac} (right).

\begin{figure}[t]
\hfill\includegraphics{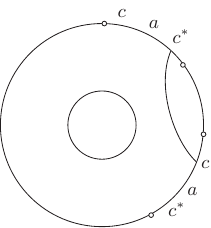}\hfill\includegraphics{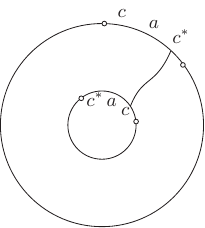}\hfill\hbox{}
\caption{\small{}The two cases (\textit{i}) and (\textit{ii}) below.\label{fig:cac}}
\end{figure}
We shall show that $(ii)$ is not possible and in case $(i)$
the $a$ and $a^*$ must be adjacent as in Example
\ref{ex:Product of two circular operators}.  In either case,
suppose $c^*$ is in position $3i$ and $c$ is in position
$3j+1$, then $3i$ and $3j$ are not separated by
$\pi^{-1}\gamma$. Indeed,
$\pi^{-1}\gamma(3j)=\pi^{-1}(3j+1)=3i$. The only possibility
is then that $j=i$, i.e. each $c^*$ in a position $3i$ is
connected to $c$ in the position $3i+1$, in the same circle;
this excludes case $(ii)$.

Thus, we may write the partitioned permutations $(\cV, \pi)$
in the sum above as $\pi=\pi_1\pi_2$ where
$\pi_1=(3,4)(6,7)\cdots(3p+3q-3,3p+q-2)$ and $\pi_2$ is any
annular non-crossing permutation on
$[2,5,8,...,3p-1;3p,...,3p+3q-1]$ and when $\pi$ has no
through blocks, $\cV$ connects a block of $\pi_2$ on the
outer circle to a block of $\pi_2$ on the inner circle. Thus
the restriction of $(\cV, \pi)$ to
$[2,5,8,...,3p-1;3p,...,3p+3q-1]$ is an arbitrary
partitioned permutation and thus the set of restrictions is
isomorphic to the set $\mathcal{PS}_{NC} (p,q)$. Since
$\kappa_2(c,c^*)=1$ we have $\kappa_\pi(c,c^*, \dots, c,c^*)=1$
and thus
\begin{equation}\label{eq:second_order_cac*}
\kappa_{p,q}(cac^*, ...,cac^*)=\sum_{(\cV, \pi) \in
 \mathcal{PS}_{NC} (p,q)}\kappa_{(\cV, \pi)}(a,...,a)=\phi_2(a^p,a^q).
\end{equation}
Hence
\[
\kappa^{(cac^*)}_{p, q} = \phi_2(a^p, a^q)
=
\sum_{(\cV,\pi) \in \cPS_{NC}(p, q)} \kappa^{(a)}_{(\cV,\pi)}
=
\kappa^{(a)} * \zeta(1_{p+q}, \gamma_{p,q}).
\]
Since this is true for all $p$ and $q$ we have $\kappa^{(cac^*)} = \kappa^{(a)} * \zeta^{*k}$, as claimed. 
\end{proof}

We let 
\[
C_{cac^*}(z, w) = \sum_{m,n \geq 1} \kappa_{m,n}(cac^*, \dots, cac^*) z^{m} w^{n}
\] 
be the generating function for the second order
cumulants. Then by equation (\ref{eq:second_order_cac*}) we
have $$C_{cac^*}(z, w) = z^{-1} w^{-1} G_a(z^{-1}, w^{-1})$$
where
\[
G_a(z, w) = \sum_{m,n \geq 1} \phi_2(a^m, a^n) z^{-(m+1)} w^{-(n + 1)}
\]
is the second order Cauchy transform of $a$.

A particularly important example is the case when the
fluctuation moments of $a$ are 0. In this case $G_a(z,w)=0$
and Equation (\ref{eq:second_order_r_transform}) is reduced to
\[
G_{cac^*}(z,w)= \frac{\partial^2}{\partial z\partial w} \log
\left(\frac{ G_{cac^*}(z)- G_{cac^*}(w)}{z-w}\right).
\]
From the random matrix perspective this corresponds to the
case when $a$ is a limit of deterministic matrices and
$cac^*$ then corresponds to $WAW^*$ where $W$ is a Ginibre
matrix and $A$ is deterministic.  When $A$ is positive semidefinite, this is also called a complex Wishart Matrix with covariance matrix $A$.
\end{example}

\begin{remark} \label{Remark conjugation}
Observe in the last example, that in the case where
$a=rr^*$, the relation, $\kappa^{(crr^*c^*)} = \beta^{(cr)}
*\zeta$ (see Equation \ref{equation:beta_zeta}), between the
determining sequence of $cr$ and the cumulants of $crr^*c^*$
is the same as the one between the cumulants of $rr^*$ and
the cumulants of $crr^*c^*$, $\kappa^{(crr^*c^*)} =
\kappa^{(rr^*)} * \zeta$.  Thus
$\beta^{(cr)}=\kappa^{(rr^*)}$, which in particular explains
the last comment in Example \ref{ex:Product of two circular
  operators}.
\end{remark}

\begin{remark}\label{remark:bousquet_melou}
If in Theorem \ref{thm:free_conjugation_circular} we let $a
= 1$ and $h = c_1 c_2 \cdots c_k a c_k^*\ab \cdots\ab c_2^*
c_1^*$ then we have $\kappa^{(a)} = \kappa^{(1)} = \delta$
and so
\[
\kappa^{(h)} = \delta *\zeta^{*k} = \zeta^{*k}. 
\]
Thus 
\[
\phi_2(h^p, h^q) = \ 
\sum_{\mathclap{(\cV,\pi) \in \cPS_{NC}(p, q)}} \ 
\kappa^{(h)}_{(\cV, \pi)} = \kappa^{(h)}* \zeta(1_{p+q}, \gamma_{p,q}) =
\zeta^{*(k+1)}(1_{p+q}, \gamma_{p,q}).
\]
By the formula of Bousquet-M\'elou and Schaeffer \cite{bms}
(see the discussion in \cite[\S5.17, p. 38]{cmss} for the
interpretation in our current notation) we have that for $l
\geq 2$
\begin{equation}\label{eq:bousquet_melou_schaeffer}
\zeta^{*l}(1_{p+q}, \gamma_{p, q})
= \frac{l-1}{l} \frac{p q}{p+ q} \binom{lp}{p} \binom{lq}{q}.
\end{equation}
Thus in our example, when $h = c_1 c_2 \cdots c_k c_k^*
\cdots\ab c_2^* c_1^*$, we have
\begin{equation}\label{eq:h_moments}
\phi_2(h^p, h^q) 
= \frac{k}{k+1} \frac{p q}{p + q} \binom{(k+1)p}{p} \binom{(k+1)q}{q}. 
\end{equation} 
Note that by Theorem \ref{MT1} we have that $c_1 \cdots c_k$
is $R$-diagonal and the non-vanishing cumulants are given by
\begin{multline*}
\kappa_{p,q}(c_1 \cdots c_k, c_k^* \cdots c_1^*, \dots, c_1
\cdots c_k, c_k^* \cdots c_1^*) = \beta^{(c_1 \cdots
  c_k)}_{p,q} \\ = \kappa^{(c_1 \cdots c_kc_k^* \cdots
  c_1^*)} * \mu(1_{p+q}, \gamma_{p,q}) =
\zeta^{*(k-1)}(1_{p+q}, \gamma_{p,q})
\end{multline*}
which we can easily evaluate by the formula
(\ref{eq:bousquet_melou_schaeffer}) of Bousquet-M\'elou and
Schaeffer.
\end{remark}

\begin{remark}\label{remark:connection_to_dartois_forrester}
Let us show that Equation (\ref{eq:h_moments}) allows us to
complete a claim in \cite[Remark 5]{df} concerning the
second order moments of the product of two Wishart random
matrices with shape parameter 1. In \cite{df} the authors
consider two independent complex Ginibre matrices, $X_1$ and
$X_2$. They let $S_2 = X_1X_1^\dagger X_2 X_2^\dagger$,
where $X^\dagger$ means the conjugate transpose, $X^*$, of
$X$. By writing $X_1$ in its real and imaginary parts we see
( \textit{c.f.} \cite[\S3.2]{ms}) that the second order
joint limit distribution of $\{ X_1, X_2\}$ is that of two
second order $*$-free circular operators.  Let us denote
them by $x_1$ and $x_2$ respectively and set $s_2 = x_1
x_1^* x_2 x_2^*$. Now let $c_1 = x_1^*$ and $c_2 =
x_2$. Then $c_1$ and $c_2$ are two second order $*$-free
circular operators. Let, as in Remark
\ref{remark:bousquet_melou}, $h = c_1 c_2 c_2^* c_1^*$.
Then $s_2^p = c_1^* h^{p-1} c_1c_2 c_2^*$, and so
$\phi_2(h^p, h^q) = \phi_2(s_2^p, s_2^q) = c^{[0]}_{p,q}$
(in the notation of \cite{df}), because $\phi_2$ is tracial
in each of its arguments. Thus
\begin{align*}
c^{[0]}_{p, q} = \phi_2(h^p, h^q)
=
\frac{2}{3+1} \frac{p q}{p + q} \binom{3p}{p} \binom{3q}{q}.
\end{align*}
Note that this is now an easy conclusion of a very general
result. Moreover, the calculations are much simpler that
those required in \cite{df}.
\end{remark}

\begin{example}[Product of $k$ free circular operators] \label{prodpoiss}
Let $h=c_1c_1^*c_2c_2^*\ab\cdots c_kc_k^*$. From Equations
(\ref{11}) and (\ref{12}) of Theorem
\ref{thm:Moments_and_Cumulants_of_Products_of_Free_Variables}
we get a combinatorial description of the fluctuation
moments and cumulants:
$$\phi_2(h^p,h^q)=|\snckalt{kp}{kq}|~\text{and}~
\kappa_{p,q}(h,...,h)=|\snckea{kp}{kq}|. $$ To our best of
our knowledge a precise formula for this quantities was not
known for $k>2$. However, the method of Example
\ref{remark:connection_to_dartois_forrester} can be extended
to compute the fluctuation moments of $c_1^*c_1 c_2^*c_2
\cdots c_n^*c_n$, thereby extending the result of \cite{df}
($n=2$) to the general case and as a byproduct, giving a
formula for the above quantities. First we must relate the
fluctuation moments of $c_1^*c_1 c_2^*c_2 \cdots c_n^*c_n$
to those of $h^*h$, where $h=c_1c_2\cdots c_k$; in fact we
first show a general result that will imply that these two
variables are identically distributed. Here we shall follow
an idea of Kargin \cite{k}, but avoid the use of the
$S$-transform, which doesn't yet exist at the second order
level.

Suppose $(\cA, \phi, \phi_2)$ is a (tracial) second order
$*$-probability space and $x_1,\ab \dots, x_n \in \cA$ are
second order $*$-free and identically $*$-distributed. Let $y =
x_2 \cdots x_n$. Then $x_1$ and $y$ are free. By traciality
we have $x_1x_1^*$ and $x_1^*x_1$ are identically
distributed and, likewise,  $yy^*$ and $y^*y$ are identically
distributed.

Suppose $k \geq 1$ then
\[
\phi\big([(x_1y)^*(x_1y)]^k\big) 
=
\phi\big([(yy^*)(x_1^*x_1)]^k\big).
\]
Now both $yy^*$ and $y^*y$ are free from $x_1^*x_1$. So we
can replace $yy^*$ by $y^*y$ to get.
\[
\phi\big([(yy^*)(x_1^*x_1)]^k\big)
=
\phi\big( [(y^*y)(x_1x_1^*)]^k\big)
=
\phi\big( [(y^*y)(x_1^*x_1)]^k\big)
\]
Thus we have 
\begin{equation}\label{eq:inductive_step}
\phi\big([(x_1y)^*(x_1y)]^k\big)  
= 
\phi\big( [(x_1^*x_1)(y^*y)]^k\big).
\end{equation}
\medskip
When $n = 2$ we have 
\[
\phi\big([(x_1x_2)^*(x_1x_2)]^k\big)  
= 
\phi\big( [(x_1^*x_1)(x_2^*x_2)]^k\big).
\]

\begin{lemma}\label{lemma:first_order_manifesto}
For all $k \geq 1$ we have
\[
\phi\big( [(x_1 \cdots x_n)^*(x_1 \cdots x_n)]^k\big)
=
\phi\big( [ (x_1^*x_1) \cdots (x_n^*x_n)]^k \big).
\]
\end{lemma}

\begin{proof}
We have already checked this when $n = 2$. Suppose it holds
for $n = l-1$ and we let $y = x_2 \cdots x_l$. Then $y^*y$
and $(x_2^*x_2) \cdots (x_l^*x_l)$ are identically
distributed. By Equation (\ref{eq:inductive_step})
\[
\phi\big( [(x_1 \cdots x_l)^*(x_1 \cdots x_l)]^k\big)
=
\phi\big( [(x_1^*x_1)(y^*y)]^k\big).
\]
Since $y^*y$ and $(x_2^*x_2) \cdots (x_l^*x_l)$ are
identically distributed we have
\[
\phi\big( [(x_1^*x_1)(y^*y)]^k\big)
=
\phi\big( [(x_1^*x_1)(x_2^*x_2) \cdots (x_l^*x_l)]^k\big).
\]
With the previous equation we now have 
\[
\phi\big( [(x_1 \cdots x_l)^*(x_1 \cdots x_l)]^k\big)
=
\phi\big( [(x_1^*x_1)(x_2^*x_2) \cdots (x_l^*x_l)]^k\big).
\]
This completes the inductive step. 
\end{proof}

\bigskip
Now we want to repeat this in the second order case.  By
traciality we have $x_1x_1^*$ and $x_1^*x_1$ are identically
distributed at the second order level and, likewise, $yy^*$ and
 $y^*y$ are identically distributed at the second
order level. Let $p, q \geq 1$.
\begin{align*}\lefteqn{
\phi_2\big( [(x_1y)^*(x_1y)]^p, [(x_1y)^*(x_1y)]^q \big) }\\
&=
\phi_2\big( [(yy^*)(x_1^*x_1)]^p, [(yy^*)(x_1^*x_1)]^q \big) \\
&=
\phi_2\big( [(y^*y)(x_1^*x_1)]^p, [(y^*y)(x_1^*x_1)]^q \big) \\
&=
\phi_2\big( [(x_1^*x_1)(y^*y)]^p, [(x_1^*x_1)(y^*y)]^q \big). 
\end{align*}

This shows that $(x_1x_2)^*(x_1x_2)$ and
$(x_1^*x_1)(x_2^*x_2)$ have identical second order
distributions. If, by induction hypothesis, we have $y^*y$
and $(x_2^*x_2) \cdots (x_n^*x_n)$ have identical second
order distributions then, as $x_1$ is second order $*$-free
from $\{ x_2, \dots, x_n\}$, we have that
\[
(x_1 \cdots x_n)^*(x_1 \cdots x_n)
\mathrm{\ and\ }
(x_1^*x_1)(x_2^*x_2) \cdots (x_n^*x_n)
\]
have identical second order distributions. We state this as a lemma.

\begin{lemma}
Suppose $x_1, \dots, x_n$ are second order $*$-free. Then for all $p, q \geq 1$
\[
\phi_2\big( [(x_1 \cdots x_n)^*(x_1 \cdots x_n)]^p, 
            [(x_1 \cdots x_n)^*(x_1 \cdots x_n)]^q)
\]
\[
=
\phi_2\big( [(x_1^*x_1)\cdots(x_n^*x_n)]^p, 
            [(x_1^*x_1)\cdots(x_n^*x_n)]^q).
\]
\end{lemma}
\qed

This now shows that $c_1^*c_1 c_2^*c_2 \cdots c_k^*c_k$ and
$h^*h$ have the same fluctuation moments. As in Example
\ref{remark:bousquet_melou} we have
\[
\phi_2\big( [c_1^*c_1 \cdots c_k^*c_k]^p, [c_1^*c_1 \cdots c_k^*c_k]^q)
=
\phi_2(h^p, h^q) 
=
\zeta^{*(k+1)}(1_{p+q}, \gamma_{p,q})
\]
and by Bousquet-M\'elou Schaeffer \cite{bms} we have 
\begin{multline*}
\phi_2\big( [c_1^*c_1 \cdots c_k^*c_k]^p, [c_1^*c_1 \cdots c_k^*c_k]^q) \\
=
\frac{k}{k+1} \frac{pq}{p+q} \binom{(k+1)p}{p} \binom{(k+1)q}{q}. 
\end{multline*}
\end{example}

%We shall point out that one can use the results of Gorin and Sun \cite[Theorem 4.8]{gs} , together with the (second order) asymptotic freeness results for unitarily invariant matrices in \cite{mss} to give a general integral formula for the fluctuation moments of elements of the form $(x_1 \cdots x_n)^*(x_1 \cdots x_n)$ when they are second order free. However, such results assume that the variables $x_i$ have bounded support and come from deterministic matrices. Moreover, even if these results are extended to avoid these conditions, it is not clear how to use such a formula to obtain the above results.

We shall point out that Gorin and Sun \cite[Theorem
  4.13]{gs} give an integral formula for the limiting
covariance of the height function $\mathcal{H}_t$ of the
eigenvalues of a product of Ginibre matrices.  It is not
clear how to use such a formula to obtain the combinatorial
formulas above for the fluctuation moments.

\subsection{Counterexamples}
We end with two examples that show that not all properties
lift to the second order level.

First, contrary to the first order case the powers of a
second order $R$-diagonal operator is not $R$-diagonal as
the following example shows.

\begin{example}\label{exa:haar2}[Powers of Haar unitary]
Let $u$ be a second order Haar unitary, then $u^n$ is first
order $R$-diagonal. However, it is not  $R$-diagonal of
second order. 

We will show that
\begin{equation}\label{power:haar}
\kappa_{m,n}(u^p,\dots,u^p,u^{-p},
\dots,u^{-p})=(p-1)n\delta_{m,n}.
\end{equation}
This shows that $u^p$ is not second order $R$-diagonal, for
$p\neq1$, since there are non-alternating cumulants which do
not vanish.

To prove \eqref{power:haar} we use the moment-cumulant
formula and the fact that $u^p$ is a first order Haar
unitary. Recall that
\begin{eqnarray*}\delta_{m,n}np
& = &
\phi_2(u^{mp},u^{-np})
= 
\sum_{\pi\in S_{NC}(m,n)}
\kappa_{\pi}(u^p, \dots, u^p ,u^{-p}, \dots, u^{-p})        \\
& & \mbox{} + \ 
\sum_{\mathclap{(\cV,\pi)\in P_{NC}(m,n)'}}\ 
\kappa_{(\cV,\pi)}(u^p, \dots, u^p, u^{-p}, \dots, u^{-p}). 
\end{eqnarray*}

Notice that the first sum does not depend on $p$ since we
only consider first order cumulants. Let us call this
quantity $\Sigma_1(m,n)$, so that

\begin{eqnarray*}\delta_{m,n}np&=&\Sigma_1(m,n)
\ + \ 
\sum_{\mathclap{(\cV,\pi)\in P_{NC}(m,n)'}} \ 
\kappa_{(\cV,\pi)}(u^p,\dots,u^p,u^{-p},\dots,u^{-p}) 
 \end{eqnarray*}

For the second sum, notice that if
$(\cV,\pi)\neq(1_{m,m},\gamma_{m,m})$
then $$\kappa_{(\cV,\pi)}(u^p,\dots,u^p,u^{-p},\dots,u^{-p})=0.$$
Indeed, at least one block of $\cV$, say $B$, is
contained in either $[m]$ or $[m+1, m+n]$, and for this
block either $\kappa_{|B|}(u^p,...,u^p)=0$ or, respectively,
$\kappa_{|B|}(u^{-p},...,u^{-p})=0$, since $u^p$ is a first
order Haar unitary.

This means that 
\begin{eqnarray}\label{nice!!!}
\kappa_{m,n}(u^p,\dots,u^p,u^{-p},\dots,u^{-p})=np\delta_{m,n}-\Sigma_1(m,n)
\end{eqnarray}

It remains to calculate $\Sigma_1(m,n)$, which only depends
on $m$ and $n$, and not on $p$. In particular, we may take
$p=1$ in \eqref{nice!!!} to obtain
\[
0 = \kappa_{m,n}(u,\dots,u,u^{-1},\dots,u^{-1}) =
n\delta_{m,n} - \Sigma_1(m,n),
\]
from which $\Sigma_1(m,n)= \delta_{m,n}n$, and then
\eqref{power:haar} follows.

More generally we may consider a sequence of exponents
$(\epsilon, \theta)=(\epsilon_1,\dots,\epsilon_m,
\theta_1,\dots,\theta_n)$ with $\epsilon_i, \theta_j \in
\{-1, 1\}$.  We are interested in the second order cumulant
\[
\kappa_{m,n}^{(\epsilon,\theta,p)}:=\kappa_{m,n}(u^{p
  \epsilon_1},\dots, u^{p \epsilon_m} ,u^{p \theta_1},\dots,
u^{p \theta_n}).
\]
We will show in Equation (\ref{eq:linear_reduction}) that we
may write $\kappa_{m,n}^{(\epsilon,\theta,p)}$ in terms of
$p$, $\kappa_{m,n}^{(\epsilon,\theta,1)}$ and
$\kappa_{m,n}^{(\epsilon,\theta,2)}$.

We prove by induction on $m+n$ that there are functions
$a_{(\cV, \pi)}(\epsilon, \theta)$ and $b_{(\cV,
  \pi)}(\epsilon, \theta)$ such that for all $(\cV, \pi) \in
\cPS_{NC}(m, n)$ and all $p$ we have
\begin{multline*}
\kappa_{(\cV, \pi)}(u^{p\epsilon_1}, u^{p\epsilon_2}, \dots,
u^{p\epsilon_m}, u^{p\theta_1}, u^{p\theta_2}, \dots,
u^{p\theta_n}) \\ = a_{(\cV, \pi)}(\epsilon, \theta) p +
b_{(\cV, \pi)}(\epsilon, \theta).
\end{multline*}

When $m = n = 1$ we set $\cV = \{(1, 2)\}$ and $\pi=
(1)(2)$. For $\epsilon, \theta \in \{-1, 1\}$ we set
\[
a_{(\cV, \pi)}(\epsilon, \theta) =
\begin{cases} 1 & \epsilon + \theta =0 \\
              0 & \epsilon + \theta \not=0
\end{cases}
\mbox{\ and\ }
b_{(\cV, \pi)}(\epsilon, \theta) =
\begin{cases}-1 & \epsilon + \theta =0 \\
              0 & \epsilon + \theta \not=0
\end{cases}.
\]
By Equation (\ref{power:haar}) we have
$\kappa_{1,1}(u^{p\epsilon}, u^{p\theta}) = (p-1)
\delta_{\epsilon, -\theta} = a_{(\cV, \pi)}(\epsilon,
\theta) p + b_{(\cV, \pi)}(\epsilon, \theta)$. This starts
the induction.

To continue,  we write $\epsilon = \epsilon_1 + \epsilon_2 
+ \cdots + \epsilon_m$; then $u^{p\epsilon_1}u^{p\epsilon_2} 
\cdots u^{p\epsilon_m} = u^{p\epsilon}$. Likewise we set $\theta 
= \theta_1 + \theta_2 + \cdots + \theta_n$; then $u^{p\theta_1}u^{p\theta_2} 
\cdots u^{p\theta_m} = u^{p\theta}$. By the 
moment-cumulant formula
\begin{eqnarray*}\lefteqn{%
    p\phi_2(u^{\epsilon} ,u^{\theta}) =
    \phi_2(u^{p \epsilon} ,u^{p\theta})} \\
  & = &
\sum_{\pi\in S_{NC}(m,n)} \kappa_\pi (u^{p
  \epsilon_1},\dots, u^{p \epsilon_m} ,u^{p \theta_1},\dots,
u^{p \theta_n}) \\
& & \mbox{}+\sum_{(\cV,\pi)\in
    P_{NC}(m,n)'}\kappa_{(\cV,\pi)}(u^{p \epsilon_1},\dots,
  u^{p \epsilon_m} ,u^{p \theta_1},\dots, u^{p \theta_n}).
\end{eqnarray*}

The sum $\Sigma^{(\epsilon, \theta)}:=\sum_{\pi\in
  S_{NC}(m,n)} \kappa_{\pi}(u^{p\epsilon_1} ,\dots, u^{p\theta_n})$
only depends on $\epsilon$ and $\theta$. On the other hand,
suppose $(\cV,\pi)\neq(1_{m+n},\gamma_{m,n})$. Write $\pi =
\pi_1 \times \pi_2 \in NC(m) \times NC(n)$. Let $B$ be the
block of $\cV$ that is the union of a cycle of $\pi_1$ (with
$k$ elements) and a cycle of $\pi_2$ (with $l$ elements).

Then there are $\eta_1, \dots, \eta_{k+l} \in \{-1, 1\}$ and
$\zeta^{(1)}_1, \dots, \zeta^{(1)}_{r_1}, \dots,
\zeta^{(s)}_1, \dots,\ab \zeta^{(s)}_{r_s} \in \{-1, 1\}$
such that
\begin{multline*}
\kappa_{(\cV,\pi)}(u^{p \epsilon_1}, \dots, u^{p
  \epsilon_m}, u^{p \theta_1}, \dots, u^{p \theta_n}) \\ =
\kappa_{k,l}(u^{p \eta_1}, \dots, u^{p \eta_{k+l}} ) \times
\prod^s_{i=1}\kappa_{r_i}(u^{p\zeta^{(i)}_{1}}, \dots,
u^{p\zeta^{(i)}_{r_i}}),
\end{multline*}
Thus, by induction this cumulant is a product of a linear
function of $p$, $ \kappa_{k,l}(u^{p \eta_1}, \dots, u^{p
  \eta_{k+l}} ) $, and terms that do not depend on $p$,
namely, cumulants of first order. Let us denote this linear
function by $\tilde{a}_{(\cV,\pi)} (\epsilon,\theta)
p+\tilde{b}_{(\cV,\pi)} (\epsilon,\theta).$ Putting this all
together we arrive at the identity
\begin{eqnarray*}\lefteqn{
\kappa_{m,n}(u^{p\epsilon_1} , \dots, u^{p \theta_n}) 
=
p\phi_2(u^{\epsilon} ,u^{\theta}) + \Sigma^{(\epsilon, \theta)} }\\
&& \mbox{} + \kern-1em
\sum_{\substack{(\cV,\pi)\in \cPS_{NC}(m,n)' \\ (\cV,\pi)\neq(1_{m+n},\gamma_{m,n})}} \kern-1em
(\tilde{a}_{(\cV,\pi)}(\epsilon, \theta)p+\tilde{b}_{(\cV,\pi)}(\epsilon, \theta))                    \\
& = &
p \Big(\phi_2(u^{\epsilon} ,u^{\theta})
+ \kern-1em
\sum_{\substack{(\cV,\pi)\in \cPS_{NC}(m,n)' \\ (\cV,\pi)\neq(1_{m+n},\gamma_{m,n})}}
\tilde{a}_{(\cV,\pi)} (\epsilon, \theta)\Big)  
+ \Sigma^{(\epsilon, \theta)} \\
&& \mbox{} + 
\sum_{\substack{(\cV,\pi)\in \cPS_{NC}(m,n)' \\ (\cV,\pi)\neq(1_{m+n},\gamma_{m,n})}}
\tilde{b}_{(\cV,\pi)}(\epsilon, \theta).
\end{eqnarray*}
which proves the inductive step. By substituting
$\kappa_{m,n}^{(\epsilon,\theta,2)}$ and
$\kappa_{m,n}^{(\epsilon,\theta,1)}$ the formula above we
see that
\begin{equation}\label{eq:linear_reduction}
\kappa_{m,n}^{(\epsilon,\theta,p)} =  p \left(\kappa_{m,n}^{(\epsilon,\theta,2)}-\kappa_{m,n}^{(\epsilon,\theta,1)}\right) +\left (2\kappa_{m,n}^{(\epsilon,\theta,1)}-\kappa_{m,n}^{(\epsilon,\theta,2)}\right)
\end{equation}
as claimed.\qed\end{example}

\begin{figure}[t] 
\hfill\includegraphics[]{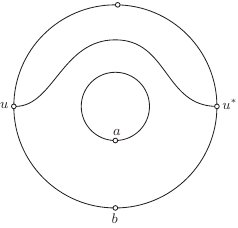}\hfill\includegraphics[]{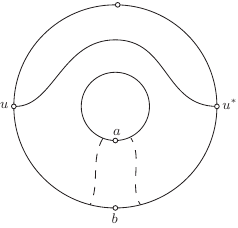}\hfill\hbox{}
\caption{\small Diagrams in Example \ref{exa:uau2} with
  non-zero contribution. \label{fig:uau2}}
\end{figure}

Finally, in first order free probability, it was shown in
\cite{ns1} that if $u$ is a Haar unitary free from $a$, then
$ubu^*$ is free from $a$, for any $b$ free from $u$.  We now
prove that, for $u$ and $a$ second order free, in general,
$ubu^*$ is not second order free from $a$,

\begin{example}\label{exa:uau2} 
Suppose that $a$ and $b$ are operators in a second order
probability space, and $u$ is a second order Haar unitary
which is second order free from $a$ and $b$.  Then, from the
formula (\ref{MST}) of Proposition \ref{mst} for cumulants
with products as arguments we have
$$\kappa_{1,1}(ubu^*,a) =\sum_{(\cV,\pi) \in
  \cPS_{NC}(3,1)}\kappa_{(\cV,\pi)}(u,b,u^*,a),$$ where the
sum is over partitions that separate the points
$\{3,4\}$. The only partitions satisfying that $3$ and $4$
are in different cycles as shown in Fig \ref{fig:uau2}. Thus
\[
\kappa_{1,1}(ubu^*,a)=\kappa_2(u, u^*) \kappa_2(b,a) + \kappa_2(u,u^*)
\kappa_{1,1}(b,a) = \kappa_2(u,u^*) \phi_2(b,a)
\]
Thus, whenever $\phi_2(b,a) \not= 0$, (in particular
whenever $a$ and $b$ are \textit{not} second order free),
$uau^*$ and $b$ are not second order free since
$\kappa_{1,1}(ubu^*,a)$ does not vanish. Note that this does
not contradict \cite[Thm. 2.9]{cmss} because with the
hypotheses there we get the second order freeness of $a$ and
$b$.
\end{example}

\section{Acknowledgements}
We are very grateful to the referee for many useful suggestions that clarified the results in the paper. 

%%%%%%%%%%%%%%%%%%%%%%%%%%%%%%%%%%%
%%%%%%%%%%%%%%%%%%%%%%%%%%%%%%%%%%%
%%%%                           %%%%
%%%%  B I B L I O G R A P H Y  %%%%
%%%%                           %%%%
%%%%%%%%%%%%%%%%%%%%%%%%%%%%%%%%%%%
%%%%%%%%%%%%%%%%%%%%%%%%%%%%%%%%%%%

\thebottomline\end{document}